\documentclass[a4paper,reqno]{amsart}

\usepackage{amsmath,amsthm,amssymb,mathrsfs}
\usepackage[alphabetic,nobysame]{amsrefs}
\usepackage{mathtools}
\usepackage{lmodern}
\usepackage{microtype}
\usepackage[T1]{fontenc}
\usepackage{dsfont}
\usepackage[english]{babel}
\usepackage[hidelinks]{hyperref}
\usepackage[textsize=tiny, color=white, linecolor=lightgray]{todonotes}

\usepackage{tikz-cd}

\allowdisplaybreaks

\newtheorem{theorem}{Theorem}[section]
\newtheorem{lemma}[theorem]{Lemma}
\newtheorem{prop}[theorem]{Proposition}
\newtheorem{cor}[theorem]{Corollary}

\theoremstyle{definition}

\newtheorem{remark}[theorem]{Remark}

\numberwithin{equation}{section}

\DeclareMathOperator{\Br}{Br}
\DeclareMathOperator{\Cl}{Cl}
\DeclareMathOperator{\Hr}{H}
\DeclareMathOperator{\Pic}{Pic}
\DeclareMathOperator*{\im}{im}
\DeclareMathOperator{\Hom}{Hom}
\DeclareMathOperator*{\Spec}{Spec}
\DeclareMathOperator{\vol}{vol}

\newcommand{\Cd}{\mathds{C}}
\newcommand{\Zd}{\mathds{Z}}
\newcommand{\Fd}{\mathds{F}}
\newcommand{\Rd}{\mathds{R}}
\newcommand{\Gd}{\mathds{G}}
\newcommand{\Pd}{\mathds{P}}
\newcommand{\Ad}{\mathds{A}}

\newcommand{\Gm}{\Gd_{\mathrm{m}}}

\newcommand{\ind}{\mathds{1}}

\newcommand{\Sc}{\mathcal{S}}
\newcommand{\Cc}{\mathcal{C}}
\newcommand{\Pc}{\mathcal{P}}
\newcommand{\Ic}{\mathcal{I}}
\newcommand{\Lc}{\mathcal{L}}
\newcommand{\Oc}{\mathcal{O}}
\newcommand{\Qc}{\mathcal{Q}}
\newcommand{\Os}{\mathcal{O}}
\newcommand{\Uc}{\mathcal{U}}
\newcommand{\Is}{\mathcal{I}}
\newcommand{\OK}{\of_K}
\newcommand{\IK}{\Is_K}

\newcommand{\Ab}{\mathbf{A}}
\newcommand{\Ib}{\mathbf{I}}
\newcommand\oneb{\mathbf{1}}

\newcommand{\ab}{{\boldsymbol{a}}}
\newcommand{\db}{{\boldsymbol{d}}}
\newcommand{\rb}{{\boldsymbol{r}}}
\newcommand{\ub}{{\boldsymbol{u}}}
\newcommand{\xb}{{\boldsymbol{x}}}
\newcommand{\yb}{{\boldsymbol{y}}}

\newcommand{\xbb}{{\underline{\xb}}}
\newcommand{\ybb}{{\underline{\yb}}}

\newcommand{\Df}{\mathfrak{D}}

\newcommand{\af}{\mathfrak{a}}
\newcommand{\bfr}{\mathfrak{b}}
\newcommand{\cf}{\mathfrak{c}}
\newcommand{\dfr}{\mathfrak{d}}
\newcommand{\efr}{\mathfrak{e}}
\newcommand{\ffr}{\mathfrak{f}}
\newcommand{\Lf}{\mathfrak{L}}
\newcommand{\pf}{\mathfrak{p}}
\newcommand{\qf}{\mathfrak{q}}
\newcommand{\kf}{\mathfrak{k}}
\newcommand{\of}{\mathfrak{o}}
\newcommand{\Xf}{\mathfrak{X}}
\newcommand{\Yf}{\mathfrak{Y}}
\newcommand{\N}{\mathfrak{N}}
\newcommand{\Pf}{\mathfrak{P}}
\newcommand{\rfr}{\mathfrak{r}}
\newcommand\gammab{\boldsymbol{\gamma}}

\newcommand{\cI}{{}_\cfrb I}
\newcommand{\cR}{{}_\cfrb R}
\newcommand{\cYf}{{}_\cfrb\Yf}

\newcommand{\Fs}{\mathcal{F}}
\newcommand{\Ms}{\mathcal{M}}
\newcommand{\Gs}{\mathcal{G}}
\newcommand\Msunder{\underline{\Ms}}
\newcommand\Msover{\overline{\Ms}}
\newcommand\Munder{\underline{M}}
\newcommand\Mover{\overline{M}}
\newcommand\id{\mathrm{id}}

\newcommand\Mod[1]{\ (\mathrm{mod}\ {#1})}
\newcommand\congr[3]{#1 \equiv #2 \Mod{#3}}

\newcommand{\cfb}{\underline{\mathfrak{c}}}

\newcommand{\df}{\mathrm{d}}
\newcommand{\br}{\mathrm{b}}

\newcommand{\sums}[1]{\sum_{\substack{#1}}}
\newcommand{\prods}[1]{\prod_{\substack{#1}}}
\newcommand{\norm}[1]{\lVert #1 \rVert}

\newcommand\cfrb{{\underline{\mathfrak{c}}}}
\newcommand\dfrb{{\underline{\mathfrak{d}}}}
\newcommand\efrb{{\underline{\mathfrak{e}}}}
\newcommand\afrb{{\underline{\mathfrak{a}}}}

\AtBeginDocument{%
   \def\MR#1{}
}

\begin{document}

\title{Equidistribution and the torsor method}

\author[C.~Bernert]{Christian Bernert}

\address{Institute of Science and Technology Austria, Am Campus 1, 3400 Klosterneuburg, Austria}

\email{\href{mailto:christian.bernert@ist.ac.at}{christian.bernert@ist.ac.at}}

\author[U.~Derenthal]{Ulrich Derenthal} 

\address{Institut f\"ur Algebra, Zahlentheorie und Diskrete Mathematik, Leibniz Universit\"at Hannover, Welfengarten 1, 30167 Hannover, Germany}

\email{\href{mailto:derenthal@math.uni-hannover.de}{derenthal@math.uni-hannover.de}}

\author[F.~Wilsch]{Florian Wilsch}

\address{Mathematisches Institut, Georg-August-Universität Göttingen, Bunsenstra\ss{}e 3--5, 37073 Göttingen, Germany}

\email{\href{mailto:florian.wilsch@mathematik.uni-goettingen.de}{florian.wilsch@mathematik.uni-goettingen.de}}

\date{July 16, 2026}

\keywords{Manin's conjecture, rational points, equidistribution, del Pezzo surface, universal torsor}
\subjclass[2020]{11G35 (11D45, 14G05)}

\setcounter{tocdepth}{1}

\begin{abstract}
    We prove equidistribution and Manin's conjecture for rational points outside the lines on smooth split quintic del Pezzo surfaces over number fields with respect to any anticanonical height. The proof is based on a general theorem that is broadly usable to deduce equidistribution when using the torsor method with several equivalent height functions to treat variants of Manin's problem.
\end{abstract}

\maketitle

\tableofcontents

\section{Introduction}

What is the distribution of rational points on algebraic varieties? 

\smallskip

This classical question in number theory can be interpreted in several different ways.
For a variety $X$ over a number field $K$ with infinitely many rational points, a qualitative, topological point of view is to study weak approximation, that is, to ask whether the set $X(K)$ of rational points is dense in the space of adelic points. Even for arithmetically relatively simple classes of varieties, this property can fail to hold, and the goal is thus to find and study criteria for when it holds or fails. For contributions to this problem, see \cites{CTS87,Harari04,Peyre05,Wittenberg23}, for instance.

A standard quantitative approach to this question introduces a height function $H \colon X(K) \to \Rd_{>0}$ measuring the complexity of rational points and studies the asymptotic growth rate of the number of rational points of bounded height. For (almost) Fano varieties, Manin's conjecture~\cite{FMT89} and its refinements and generalizations due to Batyrev, Chambert-Loir, Peyre, Tschinkel \cites{Pey95,BT98,CLT10}, and others are guiding the research on this problem, while several variants of this problem on integral points~\cites{CLT12,Wil24,Santens23}, Campana points~\cites{PSTVA,MR5034466}, or rational points in families~\cites{MR3568035,loughran2024leadingconstantrationalpoints} are now active areas of research as well.

Equidistribution combines these perspectives: where weak approximation guarantees the existence of at least one rational point in every open set of adelic points, equidistribution is about the number of such rational points of bounded height. For a projective variety $X$ over a number field $K$, this means that there is a probability measure $\tau$ on the space $X(\Ab_K)$ of adelic point that quantifies the proportion of rational points that lie in sufficiently well-formed sets. Concretely, one says that a subset $V\subset X(K)$ of rational points is \emph{equidistributed in (a quotient of) $X(\Ab_K)$ with limit measure $\tau$ when ordered by height $H$} if, for every $W$ in (that quotient of) $X(\Ab_K)$ whose boundary is a $\tau$-null set, we have
\begin{equation*}
    \frac{|\{x \in V \cap W : H(x) \le B\}|}{|\{x \in V : H(x) \le B\}|} \to \tau(W),
\end{equation*}
as $B \to \infty$ (see \cite[D\'ef.~3.1]{Pey95}). More formally, this can be regarded as an instance of weak convergence of probability measures. Note that even in the absence of weak approximation, equidistribution in this sense may hold: the limit measure $\tau$ might simply be supported on a strict subset of $X(\Ab_K)$.

\subsection{From counting to equidistribution}

This point of view on rational points in the context of Manin's conjecture goes back to Peyre~\cite{Pey95}. While equidistribution seems significantly more fine-grained than pure counting prima facie, his work contains a key connection between the two problems.
His interpretation of the leading constant in Manin's problem features \emph{Tamagawa measures} that depend on a metric on the anticanonical bundle. One feature of this interpretation is that a modification of the metric is equivalent to putting a greater or lower weight on rational points inside some region of the space of adelic points. As a consequence, an asymptotic formula for all such metrics (of the shape predicted by Peyre) implies an equidistribution theorem for rational points~\cite[Prop.~3.3]{Pey95}. In fact, it is not necessary to deal with all such metrics; a dense subset --- such as that of smooth metrics --- is sufficient.

Attacks on Manin's problem based on harmonic analysis can readily deal with arbitrary smooth metrics, and several equidistribution results have been obtained  based on Peyre's theorem; see \cites{CLT02,CLT12} for (partial) equivariant compactifications of vector groups, for example. In the torsor method for Manin's conjecture, arbitrary smooth adelic metrics are significantly less natural; sharp cutoff functions, turned into explicit polynomial inequalities with the help of a Cox ring, are instead a standard feature. The aim of this article is thus to provide a variant of Peyre's abstract equidistribution theorem that can be easily and broadly applied to works based on the torsor method.

\subsection{An abstract equidistribution theorem for the torsor method}

To this end, let $X$ be a (not necessarily smooth) projective variety over a number field $K$, equipped with a big and semiample line bundle $L$. Let $V\subset X(K)$ be some set of rational points; in the classical setting of Manin's conjecture, this would be the set of rational points that do not lie on an accumulating subset, but in fact, our main theorem is applicable more broadly to sets $V$ of integral or semi-integral points, for instance.

Heights for the torsor method are obtained by passing to a multiple $dL$ of $L$ (with $d \in \Zd_{>0}$) that is base point free and selecting a finite set $\Pc = \{s_0,\dots, s_n\} \subset \Hr^0(X,dL)$ of global sections without common zeros. Pulling back the standard (supremum) metric on $\Oc_{\Pd^n}(1)$ along the morphism $f_{\Pc}\colon X\to \Pd^n$ induced by $\Pc$ yields one on $dL$ and $L$. Call the resulting metrized bundle $L_\Pc$ --- it induces a height function $H_{\Pc}$, which allows us to define the counting function
\begin{equation*}
    N(\Pc,B) = |\{x\in V : H_{\Pc}(x)\le B\}|
\end{equation*}
for the number of points in $V$ of bounded height. If height functions associated with sufficiently many such sets $\Pc$ result in an asymptotic formula whose leading constant is proportional to a measure $\tau$, it turns out that equidistribution follows. In the context of Manin's conjecture for rational points, this measure is the restriction of the Tamagawa measure to the Brauer--Manin set $X(\Ab_K)^{\Br X}$. Our first main result is thus the following equidistribution theorem, generalizing the one of Peyre~\cite[Prop.~3.3]{Pey95} and that by Chambert-Loir and Tschinkel~\cite[Prop.~2.10]{CLT10} as applicable to varieties.

\begin{theorem}\label{thm:equidistribution-theorem}
    Let $X$ be a projective variety over a number field $K$ equipped with a big and semiample line bundle $L$, and let $dL$ be a base point free multiple. Let $V\subset X(K)$ be a subset of its rational points. Let $a\in \Rd_{> 0}$, $b\in \Rd_{\ge 1}$, and $c\in \Rd_{>0}$. Let $S$ be a finite set of places of $K$ containing the archimedean ones. Let $\Pc= \{s_0,\dots, s_n\} \subset \Hr^0(X,dL)$ be a finite set of global sections without common zeroes on $X$ and $f = f_{\Pc}$ the corresponding morphism. Let $\tau_\Pc$ be a finite measure on $X(\Ab_K)$, and for each set $\Pc'$ with the same properties, define a measure $\tau_{\Pc'}$ via
    \begin{equation}\label{eq:tamagawa-relative-heights}
        \df \tau_{\Pc'} = \left(\frac{H_{\Pc}}{H_{\Pc'}}\right)^a\df \tau_{\Pc}.
    \end{equation}
    
    If, for each $\Pc'\subset \Hr^0(X,dL)$ such that
    \begin{enumerate}
        \item $\Pc\subset \Pc'$ and \label{enum:P'-cond-1}
        \item every section in $\Pc'\setminus \Pc$ has the form $uh+u'h'$ with $u,u'\in \of_S$ and $h,h'\in \Pc$,\label{enum:P'-cond-2}
    \end{enumerate}
    the asymptotic expansion
    \begin{equation}\label{eq:asymptotic-hypothesis}
        N(\Pc',B)  = c \tau_{\Pc'}(X(\Ab_K)) B^a (\log B)^{b-1} (1+o_{\Pc'}(1))
    \end{equation}
    holds for $B \to \infty$, then the images of points in $V$ under $f$ are equidistributed in $\prod_{v\in S} f(X)(K_v)$ with respect to the limit measure $f_*\tau_\Pc$ (normalized to a probability measure) when ordered by $H_\Pc$.
\end{theorem}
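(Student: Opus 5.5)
We follow Peyre's argument for \cite[Prop.~3.3]{Pey95}: read the asymptotic formula for a modified height as an integral of a weight function against the counting measures. Set $\mu_B = N(\Pc,B)^{-1}\sum_{x\in V,\,H_\Pc(x)\le B}\delta_{f(x)}$, a probability measure on $Y_S=\prod_{v\in S} f(X)(K_v)$. Let $\nu = f_*\tau_\Pc/\tau_\Pc(X(\Ab_K))$. We must show $\mu_B\to\nu$ weakly. The space $Y_S$ is compact, since $f(X)$ is projective, so it suffices to test convergence on a family $\Fs$ of continuous functions with two properties:
\begin{itemize}
\item every indicator of an open set with $\nu$-null boundary can be squeezed between members of $\Fs$ up to arbitrarily small $\nu$-measure;
\item $\Fs$ is closed under enough operations to run the argument below.
\end{itemize}

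\textbf{Key computation.} Take $\Pc'$ as in the hypotheses. Then $H_{\Pc'}\ge H_\Pc$, because $\Pc\subset\Pc'$ and the metrics are sup-metrics. The ratio $\theta=(H_\Pc/H_{\Pc'})^{a}$ is a product of local factors. At $v\notin S$ each factor equals $1$, since $u,u'\in\of_S$ give $|uh+u'h'|_v\le\max(|h|_v,|h'|_v)$. At $v\in S$ the factor is $\max_{i}|s_i|_v/\max_{s\in\Pc'}|s|_v$, evaluated at any representative; this is a continuous function on $f(X)(K_v)$ with values in $(0,1]$. So $\theta$ is a continuous function on $Y_S$, pulled back along $f$.

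Now compare the two counts. Using $N(\Pc',B)=\#\{x: H_\Pc(x)\le B\theta(x)^{1/a}\}$ and partial summation, we expect
\[
N(\Pc',B)/N(\Pc,B)\to \int\theta\,d\nu .
\]
This limit is exactly what \eqref{eq:asymptotic-hypothesis} gives for both $\Pc'$ and $\Pc$, because $\tau_{\Pc'}(X(\Ab_K))=\int\theta\,d\tau_\Pc$. To pass from the count to the weighted integral, first approximate $\theta$ by step functions on level sets of $\theta$. Then use that the slowly varying factor $B^a(\log B)^{b-1}$ satisfies $(\lambda B)^a(\log \lambda B)^{b-1}\sim\lambda^aB^a(\log B)^{b-1}$. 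Conversely, following Peyre, from $\#\{x:H_\Pc(x)\le B\lambda\}$ for a family of weights one recovers $\int \phi\,d\mu_B\to\int\phi\,d\nu$ for the relevant $\phi$. In practice we prove $\limsup\int\phi\,d\mu_B\le\int\phi\,d\nu$ and the matching $\liminf$ by monotone sandwiching over thresholds $\theta\ge t$.

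\textbf{Density step, the main obstacle.} We must show that the functions $\theta_{\Pc'}$ obtainable this way, together with whatever closure operations the limit argument tolerates (positive linear combinations, thresholds $\{\theta>t\}$), separate points and generate enough sets. The plan is as follows. Use sections $s=uh+u'h'$ with $u,u'\in\of_S$ chosen so that $|u|_v$ is large at one $v\in S$ and small at the others; this is possible by strong approximation or the unit theorem for $S$-units and $S$-integers. Adding $N$ such sections at once is allowed, since each section individually has the required form. Then $\theta$ becomes $\min(1,\ \max|s_i|_v/\max_j|u_jh+u'_jh'|_v)$ at one place and is $\approx1$ elsewhere. Sets of the form $\{|uh+u'h'|_v<\epsilon\max|s_i|_v\}$ are neighbourhoods of the loci $h/h'=-u'/u$. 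Since $\of_S$-ratios are dense in $K_v$ (and in $\prod_{v\in S}K_v$ via approximation), finite combinations of these generate a basis of the topology of $f(X)(K_v)\subset\Pd^n(K_v)$, using coordinate ratios $s_i/s_j$.

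The delicate part is turning these boundary-like sets into genuine approximations of indicator functions, with errors small in $\nu$-measure. We handle it by taking multiple sections with large coefficients, so that $\theta^{1/a}$ is close to $0$ outside a small neighbourhood. We then combine this with inclusion–exclusion over finitely many $\Pc'$, and conclude by the portmanteau theorem.
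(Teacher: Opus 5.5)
Your setup is fine: $\theta_{\Pc'}$ is continuous, its factors at $v\notin S$ are $1$, and $N(\Pc',B)/N(\Pc,B)\to\int\theta_{\Pc'}\,\df\nu$ is literally the hypothesis. The gap is in the density step, which carries the whole content of the theorem, and the mechanism you propose for it cannot work.

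The hypothesis never gives you $\int\theta_{\Pc'}\,\df\mu_B$. For each admissible $\Pc'$ and each scale $\lambda$ (counting at $\lambda B$), it gives the limiting proportion of $x$ with $H_\Pc(x)/B\le\lambda\,\theta_{\Pc'}(f(x))^{1/a}$. That is the mass of a region under the graph of $\lambda\theta_{\Pc'}^{1/a}$ in $(y,t)$-space, not of a subset of $Y_S$. So positive linear combinations of the $\theta_{\Pc'}$ are not something the counts see. To read off $\mu_B(W)$ from such counts, you would need some $\theta_{\Pc'}$ to be nearly $\ind_W$.

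That is impossible whenever $\nu$ has a positive continuous density, e.g.\ a Tamagawa measure. Suppose every added form satisfies $|l|_v\le(1+\epsilon)\norm{\cdot}_v$ on a ball of radius $\delta$ in an affine chart. Then its linear part there has size $O(1/\delta)$, so $|l|_v\le C\norm{\cdot}_v$ on the concentric ball of radius $2\delta$. Hence $\theta_{\Pc'}$ stays above a constant independent of $\delta$ on a collar whose $\nu$-mass is comparable to that of the ball. For instance, on $\Pd^1$ with $|S|=1$, one form with coefficients of size $M$ gives $\theta^{1/a}=\min\{1,(M|z-z_0|)^{-1}\}$. Then $\nu\{\epsilon\le\theta<1\}$ is about $\epsilon^{-1/a}-1$ times the mass of $\{\theta=1\}$. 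Large coefficients shrink the picture but never sharpen it. This rigidity is why Peyre's converse, which uses arbitrary continuous metrics, does not apply verbatim.

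There is a second problem when $|S|>1$: $\theta_{\Pc'\cup\Pc''}=\prod_{v\in S}\min\{\theta'_v,\theta''_v\}\neq\min\{\theta',\theta''\}$. So treating one place at a time and combining by inclusion--exclusion is not justified as stated.

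The missing idea is to keep the height as a coordinate and prove equidistribution on the cone. After reducing to $X=\Pd^n$ and $\Pc=\{x_0,\dots,x_n\}$, send $x$ to $i_S(x)/B$ in $A_S=\prod_{v\in S}C(K_v)/K_S^1\cong\prod_{v\in S}\Pd^n(K_v)\times\Rd_{>0}$. There, $N(\Pc',\kappa B)/N(\Pc,B)$ is the mass of the region $\{H_{\Pc'}\le\kappa\}$ for these points, and the limit measure is the cone lift of $\nu$.

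Differences of \emph{nested} such regions are sharp. Take $\Pc_i=\Pc\cup d_i\Pc\cup\Lc$ with $d_1,d_2\in\of_S$ and $|d_1|_v$ slightly larger than $|d_2|_v$. Let $\Lc$ consist of forms $ux_j-wx_k$ whose $\of_S$-coefficients approximate a prescribed direction at all $v\in S$ simultaneously; the paper gets these from Lemma~\ref{lem:geometry-of-numbers}. Then $N(\Pc_2,\kappa B)-N(\Pc_1,\kappa B)$ counts the points lying in a thin shell in the height coordinate over a small neighbourhood of that direction.

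These shells form a basis of $A_S^\br$ consisting of continuity sets, with the product over places handled modulo $K_S^1$ via Lemma~\ref{lem:lemma-product-of-height-annuli}. The basis is closed under finite intersections up to arbitrarily small measure, which gives weak convergence on the cone. Pushing forward to $\prod_{v\in S}\Pd^n(K_v)$ then gives the theorem. Without this lift, or an equivalent device recovering the joint distribution of $H_\Pc(x)/B$ and the values $\theta_{\Pc'}(f(x))$, your monotone sandwiching has nothing sharp to sandwich with.
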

Note that while an extension of a height function $H_\Pc = \norm{s}_{L_\Pc}^{-1}$ to adelic points depends on the choice of a meromorphic section $s$, the quotient in~\eqref{eq:tamagawa-relative-heights} does not. Moreover, if $f$ identifies points in $V$, they have to be counted with multiplicity in the equidistribution property. 

\begin{cor}\label{cor:adelic-equidistribution}
   Keep all the assumptions in Theorem~\ref{thm:equidistribution-theorem}, but assume that~\eqref{eq:tamagawa-relative-heights} and~\eqref{eq:asymptotic-hypothesis} hold for all
   $\Pc'\subset \Hr^0(X,dL)$ such that
    \begin{enumerate}
        \item $\Pc\subset \Pc'$ and
        \item every section $s\in \Pc'\setminus \Pc$ has the form $uh+u'h'$ with $u,u'\in K$ and $h,h'\in \Pc$.
    \end{enumerate}
    
    Let $H$ be an arbitrary Arakelov height function associated with $L$ and define
    \begin{equation*}
        \df \tau_H = \left(\frac{H_\Pc}{H}\right)^a \df \tau_{\Pc}.
    \end{equation*}
    Then the images of points in $V$ are equidistributed in $f(X)(\Ab_K)$ with respect to the limit measure $f_*\tau_H$ (normalized to a probability measure) when ordered by $H$, and
    \begin{equation*}
        |\{x\in V : H(x)\le B\}| = c \tau_H(X(\Ab_K))B^a (\log B)^{b-1} (1+o_H(1)).
    \end{equation*}
\end{cor}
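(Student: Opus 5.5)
The plan is to deduce the corollary from Theorem~\ref{thm:equidistribution-theorem} in two stages. The first stage upgrades $S$-adic equidistribution to adelic equidistribution for the height $H_\Pc$. The second stage transfers both the counting asymptotic and equidistribution from $H_\Pc$ to an arbitrary Arakelov height $H$, following Peyre's argument in \cite[Prop.~3.3]{Pey95}.

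\emph{Step 1: adelic equidistribution for $H_\Pc$.} Let $S$ be any finite set of places containing the archimedean ones. Every $\Pc'$ that is admissible in Theorem~\ref{thm:equidistribution-theorem} for this $S$ uses coefficients $u,u'\in\of_S\subset K$. Hence it is admissible in the corollary, and the hypotheses \eqref{eq:tamagawa-relative-heights} and \eqref{eq:asymptotic-hypothesis} hold for it. The theorem therefore gives equidistribution of $f(V)$ in $\prod_{v\in S} f(X)(K_v)$ with limit measure $f_*\tau_\Pc$, normalized, for every such $S$.

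The topology of $f(X)(\Ab_K)$ is the restricted product topology. Take a basic set $W=\prod_{v\in S}W_v\times\prod_{v\notin S}f(X)(\of_v)$ with $\tau$-null boundary. Its indicator function is approximated from above by the indicator of the projection of $W$ to $\prod_{v\in S}$, together with the tail factor $\prod_{v\notin S}$. One then has to show that the condition at places outside $S$ costs nothing. Since $f(X)$ is projective, $f(X)(\of_v)=f(X)(K_v)$ after fixing an integral model, so these sets are in fact cylinder sets over $S$. Portmanteau-type arguments with finite unions of such sets then give weak convergence on the whole adelic space.

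\emph{Step 2: passing to $H$.} Put $g=H_\Pc/H$. This is a continuous positive function on $X(\Ab_K)$. It is a product of local factors that equal $1$ outside a finite set $T$ of places, and it is bounded above and below. Fix $\varepsilon>0$. Partition $\prod_{v\in T}$ into finitely many sets $W_i$ with $\tau$-null boundaries on which $g$ lies in $[\gamma_i,(1+\varepsilon)\gamma_i]$. Then
\begin{equation*}
  \sum_i |\{x\in V\cap W_i : H_\Pc(x)\le \gamma_i B\}| \le |\{x\in V: H(x)\le B\}| \le \sum_i |\{x\in V\cap W_i : H_\Pc(x)\le (1+\varepsilon)\gamma_i B\}|.
\end{equation*}
By Step 1 together with \eqref{eq:asymptotic-hypothesis} for $\Pc'=\Pc$, each summand equals
\begin{equation*}
  c\,\tau_\Pc(W_i)\,(\gamma_i B)^a(\log B)^{b-1}\bigl(1+o(1)\bigr).
\end{equation*}
Summing over $i$ gives the main term $cB^a(\log B)^{b-1}\sum_i\gamma_i^a\tau_\Pc(W_i)$. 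Up to a factor $(1+\varepsilon)^a$ this equals $c\int g^a\,\df\tau_\Pc=c\,\tau_H(X(\Ab_K))$. Letting $\varepsilon\to0$ yields the counting asymptotic for $H$. Intersecting everything with an arbitrary set $W$ with null boundary gives $|\{x\in V\cap W : H(x)\le B\}|\sim c\,\tau_H(W)B^a(\log B)^{b-1}$, and this is equidistribution with respect to $f_*\tau_H$.

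\emph{Main obstacle.} The delicate point is that $g$ lives on $X(\Ab_K)$, whereas Steps 1 and 2 only control sets pulled back from $f(X)$. The morphism $f$ may contract curves on which $L$ is trivial, so the metric of $H$ need not be constant on the fibres of $f$. I would first try to show that $g$ is constant on the fibres of $f$ over adelic points outside a $\tau_\Pc$-null set. Failing that, I would approximate $g$ from above and below by continuous functions pulled back along $f$, namely the fibrewise supremum and infimum. The sandwich in Step 2 can then be run with these bounds. I expect their integrals against $\tau_\Pc$ to agree in the limit, because $V$ is counted with multiplicity through $f$.
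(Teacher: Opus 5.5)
Your Step~1 is the paper's ``alternative'' proof. Applying the theorem for every finite $S$ gives convergence on cylinder continuity sets $\prod_v W_v$ with $W_v=f(X)(K_v)$ for almost all $v$, and these form a generating $\pi$-system; this step is correct as it stands. Your Step~2 is Peyre's transfer from $H_\Pc$ to $H$, which the paper records as the converse theorem in the appendix. It is correct whenever $g=H_\Pc/H$ factors through $f$, for example when $f$ is an embedding, as in all of the paper's applications.

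\textbf{The gap is your last paragraph, and it cannot be closed from the stated hypotheses.} That $\bar g$ and $\underline g$ are only upper and lower semicontinuous is harmless: you can approximate them monotonically by continuous functions on $f(X)(\Ab_K)$. The real problem is what the sandwich then gives. Writing $N_H(B)=|\{x\in V: H(x)\le B\}|$, you only obtain
\[
c\int(\underline{g}\circ f)^a\,\df\tau_\Pc\le\liminf_{B\to\infty}\frac{N_H(B)}{B^a(\log B)^{b-1}}\le\limsup_{B\to\infty}\frac{N_H(B)}{B^a(\log B)^{b-1}}\le c\int(\bar{g}\circ f)^a\,\df\tau_\Pc .
\]
The outer bounds agree only if $g$ is constant on the $f$-fibre through $\tau_\Pc$-almost every point. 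Counting with multiplicity plays no role in this.

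This fibre-constancy cannot be derived from the hypotheses. Every $H_{\Pc'}$ is pulled back along $f$, so the hypotheses see $\tau_\Pc$ only through $f_*\tau_\Pc$. Here is a concrete counterexample.
\begin{itemize}
\item Take $X=\Pd^1$ over $\mathds{Q}$, $L=\Oc(1)$, $d=2$, $\Pc=\{x^2,y^2\}$, and $V=\{(x:y): xy>0\}$.
\item Let $\tau_\Pc$ be the Tamagawa measure restricted to $\{xy\ge 0\}$ at the real place; then the hypotheses hold.
\item They still hold after replacing $\tau_\Pc$ by its image under $\sigma\colon(x:y)\mapsto(x:-y)$, because every $H_{\Pc'}$ is $\sigma$-invariant.
\item Now take an $H$ whose real metric differs from that of $H_\Pc$ only near $(1:-1)$. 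Then $N_H(B)=N(\Pc,B)$, since no point of $V$ is real-close to $(1:-1)$.
\item But for the transported measure, the predicted constant $c\int g^a\,\df\tau_\Pc$ differs from $c\,\tau_\Pc(X(\Ab_K))$.
\end{itemize}

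So the statement needs an extra hypothesis, namely your fibre-constancy. It holds, for instance, if $f$ is an embedding. It also holds if $f$ is an isomorphism away from a closed $Z=f^{-1}(f(Z))$ with $\tau_\Pc(\{x: x_v\in Z(K_v)\})=0$ at the finitely many places $v$ where $g_v\neq 1$. With this hypothesis, your sandwich argument completes the proof. The paper's brief proof does not address this point either.
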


An alternative route --- and the one we shall take in Section~\ref{sec:dp5} in an example --- to a full equidistribution result is to use Theorem~\ref{thm:equidistribution-theorem} to treat archimedean places and add congruence conditions to extend the theorem to all places (Corollary~\ref{cor:equidistribution-by-counting-mod-q}).

\begin{remark}
    The compatibility property~\eqref{eq:tamagawa-relative-heights} is standard for Tamagawa measures as defined by Peyre~\cite{Pey95} as well as for their variants, such as for integral points~\cite{CLT10}, Campana points~\cites{PSTVA,MR5034466}, and rational points in families~\cites{loughran2024leadingconstantrationalpoints}. In each case, it immediately follows from a definition in the shape
    \begin{equation*}
        \df\tau = \prod_{v\in \Omega_K}\frac{\lambda_v}{\norm{s_v(\phi)}_{aL,v}}\df \phi^{-1}_*\mu,
    \end{equation*}
    where the $\lambda_v$ are some convergence factors independent of the metric, $s_v(\phi)$ is a section depending on a local chart $\phi$ but independent of the metric, and $\mu$ is a Haar measure: in Peyre's classical setting, an $n$-form, and for integral points, the product of the canonical section of (a subset of) the boundary with an $n$-form at finite places or the lift of a form on certain boundary strata over archimedean places, for instance.
\end{remark}

\begin{remark}
    If $X$ is a smooth, projective variety with $\of_S$-model $\Xf$, equipped with a big and semiample line bundle $L$ and such that a multiple $dL$ admits a globally generated model $\Lf$, then the set 
    \begin{equation*}
        \Sc = \left\{
            \Qc\subset \Hr^0(\Xf,\Lf) :
            |\Qc|<\infty
            \text{ and } 
            \Qc \text{ globally generates $\Lf$}
        \right\}
    \end{equation*}
    of sections contains all $\Pc'$ that satisfy conditions~\ref{enum:P'-cond-1} and \ref{enum:P'-cond-2} in Theorem~\ref{thm:equidistribution-theorem}, where $\Pc$ is any fixed system of polynomials drawn from this set. Hence, it suffices to check~\eqref{eq:asymptotic-hypothesis} for $\Qc$ in $\Sc$, and the same is true for variants $\Sc'$ of $\Sc$ that impose additional restrictions such as $K\Qc =\Hr^0(X,dL)$ that keep $\Sc'$ nonempty and stable under $\of_S$-linear combinations.
    
    The approach in Section~\ref{sec:dp5} will take this route. The advantage of the set $\Sc$ is that all its members $\Qc = \{s_0,\dots,s_n\}$ induce height conditions that are trivial at places outside $S$. If $x\colon \Spec \of_\pf \to \Xf$ is a local point, then $x^*\Lf\cong \of_\pf$, and the pull-backs
    $x^*s_0,\dots, x^*s_n\in x^*\Lf$ generate the residue field $\Fd_\pf$,
    so that $\max\{|s_0(x)|_\pf,\dots, |s_n(x)|_\pf\}=1$ (independently of the trivialization $x^*\Lf \cong \of_\pf$). Thus $\lVert s(x)\rVert_\pf =1$ for all sections $s$ such that $s(x)\in \Lf(x)$ is primitive. In other words, that $\Qc$ globally generates $\Lf$ means that $\lVert\cdot \rVert_\pf$ is the model norm and the vector $(s_0(x),\dots, s_n(x))$ is primitive for every $x$.
\end{remark}

\subsection{An application to quintic del Pezzo surfaces}

Theorem~\ref{thm:equidistribution-theorem} can be used out of the box to obtain equidistribution statements over archimedean places as corollaries to counting theorems --- both for rational and integral points --- in the literature~\cites{BD24,BD25integral}, which deal with sufficiently general heights. Recall that a smooth split quintic del Pezzo surface over a field $K$ is obtained by blowing up $\Pd^2_K$ in four $K$-rational points in general position. Up to a projective linear change of variables, any such set of points coincides with
\begin{equation}\label{eq:4_points}
    p_1=(1:0:0),\quad p_2=(0:1:0),\quad p_3=(0:0:1),\quad p_4=(1:1:1).
\end{equation}
In an anticanonical embedding, $X$ contains ten lines, which are precisely its $(-1)$-curves. Let $H$ (resp. $H_D$) be one of the (log) anticanonical height functions and $\tau_H$ (resp. $\tau_D$) be the corresponding Tamagawa measure treated in these works.

\begin{cor}\label{cor:rational}
    Let $X$ be a smooth split quintic del Pezzo surface over a number field $K$. Then the set $U(K)$ of rational points outside the lines is equidistributed in $\prod_{v \in \infty_K} X(K_v)$ with limit measure $\tau_H$ when ordered by $H$.
\end{cor}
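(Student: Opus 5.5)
The plan is to deduce the statement directly from Theorem~\ref{thm:equidistribution-theorem}, taking $L=\omega_X^{-1}$, $d=1$, $a=1$, $b=\operatorname{rk}\Pic X=5$ (so $(\log B)^{4}$), and $S=\infty_K$ (possibly enlarged to a finite set $S$ with $\of_S$ of class number one if the counting results need it; equidistribution in $\prod_{v\in S}$ then pushes forward to $\prod_{v\in\infty_K}$ by projection). Take $V=U(K)$. The anticanonical bundle on a smooth quintic del Pezzo surface is very ample, so any spanning set $\Pc$ of $\Hr^0(X,\omega_X^{-1})$ without common zeros gives a closed embedding $f_\Pc\colon X\hookrightarrow \Pd^5$. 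In particular $f$ identifies no points, and $f_*\tau_\Pc$ on $f(X)(K_v)$ is just $\tau_\Pc$ transported along a homeomorphism. The conclusion of the theorem is therefore literally equidistribution in $\prod_{v\in\infty_K}X(K_v)$.

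The main step is to verify hypothesis~\eqref{eq:asymptotic-hypothesis} for every admissible $\Pc'$. For this I will use the counting theorem of \cite{BD24}, which (as recalled before the corollary) gives Manin's conjecture with Peyre's constant
\[
N(\Pc',B)=c_{\mathrm{Peyre}}(H_{\Pc'})\,B(\log B)^4(1+o(1))
\]
for a class of anticanonical heights defined by finite systems of sections of the Cox ring type. Peyre's constant factors as $c=\alpha(X)\beta(X)\cdot(\text{normalising factors})$, times the Tamagawa volume $\tau_{H_{\Pc'}}(X(\Ab_K))$, and $c$ does not depend on the metric. Peyre's definition of the Tamagawa measure via local densities $\norm{\cdot}_v^{-1}$ immediately gives the compatibility~\eqref{eq:tamagawa-relative-heights} with $a=1$, because changing the archimedean metrics multiplies the local density by $H_\Pc/H_{\Pc'}$. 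Hence the hypotheses reduce to checking that every $\Pc'$ satisfying conditions~\ref{enum:P'-cond-1} and~\ref{enum:P'-cond-2} lies in the class of heights covered by \cite{BD24}.

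I expect this membership check to be the main obstacle. The sets $\Pc'$ contain $\Pc$ together with $\of_S$-combinations $uh+u'h'$, which are again anticanonical sections, i.e.\ degree-$(-K)$ polynomials in the Cox coordinates. I would therefore take $\Pc$ to be the standard system from \cite{BD24}, with integral coefficients in the Cox coordinates. I would then argue as in the preceding remark: $\Pc'$ still generates the integral model of $\omega_X^{-1}$, since it contains $\Pc$, which already does. So the non-archimedean parts of $H_{\Pc'}$ and $H_\Pc$ coincide, and only the archimedean norm changes, becoming $\max$ over a larger set of polynomials.

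If \cite{BD24} is stated for general archimedean norms of the form $\max_{s\in\Qc}|s(\cdot)|_v$ with $\Qc$ any finite generating set of integral anticanonical sections (the "sufficiently general heights" mentioned), then we are done. Otherwise I would check that their archimedean volume computations only use that the height region is bounded and semialgebraic, with boundary of lower dimension. Those properties clearly persist when finitely many polynomial inequalities are added.
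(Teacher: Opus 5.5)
Your proposal is correct and follows the paper's proof: apply Theorem~\ref{thm:equidistribution-theorem} with $S=\infty_K$, and note that adjoining $\of_K$-linear combinations to $\Pc$ leaves the finite-place part of the height unchanged. Equivalently, it preserves the gcd condition defining the admissible heights of \cite{BD24}, so that result supplies~\eqref{eq:asymptotic-hypothesis} for every $\Pc'$.

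Two cautions. Do not enlarge $S$: \cite{BD24} needs no class number assumption, and $\of_S$-coefficients at finite places would change the local heights there, taking $\Pc'$ outside the class covered by \cite{BD24}. Also, take $\Pc$ to be the system defining the given $H$, not only the standard system; your argument applies to it verbatim.
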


\begin{cor}\label{cor:integral}
Let $\Xf$ be the smooth split quintic del Pezzo surface over $\OK$ obtained by blowing up $\Pd^2_{\OK}$ in the four points $p_1,\dots,p_4$ as above,
and let $\Df$ be the strict transform of a line passing through two of the centers of the blow-up. Then the set of integral points on $\Xf\setminus \Df$ that (as rational points) lie outside the lines is equidistributed in $\prod_{v\in \infty_K} X(K_v)$ with limit measure $\tau_{D}$ when ordered by $H_D$.
\end{cor}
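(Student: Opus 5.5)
The plan is to deduce Corollary~\ref{cor:integral} from Theorem~\ref{thm:equidistribution-theorem}, applied with $X$ the generic fibre of $\Xf$, $L=\omega_X^{-1}(\Df)$ the log anticanonical bundle, $V$ the set of integral points of $\Xf\setminus\Df$ lying outside the lines, and $S=\infty_K$, so that $\of_S=\OK$. The real input is the counting theorem for integral points in~\cite{BD25integral}. First we have to check that it is proved for a class of heights stable under the operations in conditions~\ref{enum:P'-cond-1} and~\ref{enum:P'-cond-2}. That theorem is obtained by the torsor method: one passes to the universal torsor, and the log anticanonical height is given by a finite system $\Pc$ of global sections of (a multiple of) $L$, written as monomials in the Cox ring. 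We will therefore check that the admissible height systems there include every finite $\Pc'\supset\Pc$ whose extra elements are $\OK$-combinations $uh+u'h'$ with $h,h'\in\Pc$.

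\textbf{Step 1: the height condition for $\Pc'$.} For such $\Pc'$, the condition $H_{\Pc'}\le B$ on the torsor is a condition on archimedean places only, namely $\max_{s\in\Pc'}\|s(x)\|_v$ bounded. The reason is that at finite places the maximum over $\Pc'$ agrees with the maximum over $\Pc$: the extra sections are integral combinations, and the sections in $\Pc$ already generate, as in the remark on the set $\Sc$. So the non-archimedean part of the torsor parametrization (coprimality conditions, fundamental domains for units, the class group decomposition) is unchanged. Only the archimedean region changes, and it remains a region cut out by finitely many polynomial inequalities of the same degree. The counting argument of~\cite{BD25integral}, which was written for general archimedean height conditions of this type, then gives
\[
N(\Pc',B)=c\,\tau_{D,\Pc'}(X(\Ab_K))\,B^a(\log B)^{b-1}(1+o(1)),
\]
with the same $a,b,c$. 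The only part of the leading constant that depends on $\Pc'$ is the archimedean volume.

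\textbf{Step 2: the compatibility~\eqref{eq:tamagawa-relative-heights}.} We verify this relation for $\tau_D$. It holds because the log Tamagawa measure is a product of local measures of the form $\|\cdot\|_v^{-1}$ times a fixed form, so changing the metric from $H_\Pc$ to $H_{\Pc'}$ rescales the measure by $(H_\Pc/H_{\Pc'})^a$ with $a=1$. At non-archimedean places the factor is trivial by Step~1.

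\textbf{Step 3: passing back to $X$.} Here $f=f_\Pc$, and we may take $\Pc$ large enough that $f$ is an embedding; alternatively we pull back along $f$, since the anticanonical embedding is injective. Theorem~\ref{thm:equidistribution-theorem} then gives equidistribution in $\prod_{v\in\infty_K}X(K_v)$ with limit measure $\tau_D$.

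The main obstacle is Step~1. We must confirm that the uniformity in~\cite{BD25integral}, in particular its error terms, really covers the modified archimedean regions. These regions must stay definable with boundaries of bounded complexity, so that the lattice point counts go through. We must also confirm that the $(\log B)^{b-1}$ exponent and the factor $c$ do not change, since they come from the non-archimedean and Picard data. I would check this by tracing where the archimedean height enters the proof, namely in the volume computation and in the o-minimal or Lipschitz boundary estimates.
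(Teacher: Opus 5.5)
Your Steps 1 and 2 agree with the paper, which simply combines Theorem~\ref{thm:equidistribution-theorem} (with $S=\infty_K$) with the generality of the heights in \cite{BD25integral}. Step 3, however, has a genuine gap, and it is exactly the point the paper's proof is about.

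The log anticanonical bundle is $L=\omega_X^\vee(-\Df)$, not $\omega_X^\vee(\Df)$ as you wrote. It is big and semiample but not ample. For every line $C$ meeting $\Df$ one has $L\cdot C=1-\Df\cdot C=0$. Hence every morphism $f_\Pc$ given by sections of a multiple of $L$ contracts these curves, in particular the exceptional curves $E_1,E_2$ over the two centers on the line. So no choice of $\Pc$ makes $f_\Pc$ an embedding. Nor can you appeal to injectivity of the anticanonical embedding: $H_D$ is a log anticanonical height, so the relevant map is $f_\Pc$. As you set it up, Theorem~\ref{thm:equidistribution-theorem} therefore only gives equidistribution of $f(V)$ in $\prod_{v\in\infty_K} f(X)(K_v)$ with respect to $f_*\tau_D$, not on $X$.

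The missing step is the descent from $f(X)$ to $X$. Let $Z$ be the contracted locus. Two facts are needed, as in the paper:
\begin{enumerate}
\item The curves in $Z$ are lines, so they contain no point of $V$.
\item They are $\tau_D$-null sets. This holds because $\tau_D$ is concentrated on $D$ and absolutely continuous there, while each contracted curve meets $D$ in a single point.
\end{enumerate}
With these, the transfer works as follows. The map $f$ is a homeomorphism off $Z$, and $f^{-1}(f(Z))=Z$. Take a $\tau_D$-continuity set $W$. Then $\partial f(W)\subset f(\partial W)\cup f(Z)$, so $f(W)$ is an $f_*\tau_D$-continuity set with $f_*\tau_D(f(W))=\tau_D(W)$. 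Since $V\cap Z=\emptyset$, we have $f(V)\cap f(W)=f(V\cap W)$, with $f$ injective on $V$ and $H_D=H_\Pc\circ f$. Equidistribution on $X$ then follows. If either fact failed, the statement on $X$ would not follow from the one on $f(X)$.

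So the obstacle you flag in Step 1 is not where the difficulty lies; the paper takes that uniformity from \cite{BD25integral}. The step you treat as routine is the one that needs an argument.
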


The second part of this article is devoted to strengthening Corollary~\ref{cor:rational} to a full equidistribution theorem inside the space of adelic points. In principle, a similar treatment of integral points should be possible.

\begin{theorem}\label{thm:equidistribution_dP5}
    Let $X$ be a smooth split quintic del Pezzo surface over a number field $K$. Then the set $U(K)$ of rational points outside the lines is equidistributed in $X(\Ab_K)$ with limit measure $\tau_H$ when ordered by any anticanonical height $H$. In particular, Manin's conjecture holds for arbitrary anticanonical height functions.
\end{theorem}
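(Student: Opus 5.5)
The plan is to deduce Theorem~\ref{thm:equidistribution_dP5} from the archimedean equidistribution statement together with congruence conditions at finitely many finite places, via the variant of Theorem~\ref{thm:equidistribution-theorem} announced as Corollary~\ref{cor:equidistribution-by-counting-mod-q}. By the standard reduction (Peyre's criterion, \cite[Prop.~3.3]{Pey95}), it suffices to prove the asymptotic
\begin{equation*}
    |\{x\in U(K)\cap W : H(x)\le B\}| \sim c\,\tau_H(W)\, B(\log B)^{4}
\end{equation*}
for a generating family of sets $W=W_\infty\times\prod_{v\in T}W_v\times\prod_{v\notin T\cup\infty_K}X(K_v)$. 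Here $T$ is a finite set of finite places, $W_\infty$ has $\tau$-null boundary, and each $W_v$ is a fibre of reduction $X(\of_v)\to\Xf(\of_v/\pf_v^{e_v})$. Since such cylinder sets generate the topology of $X(\Ab_K)$ and their boundaries are null, equidistribution for them gives full adelic equidistribution.

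\textbf{Step one} is to pass to the universal torsor, as in \cite{BD24}. Rational points of $U(K)$ correspond, up to the action of units and class group representatives, to $\of_K$-points $(\eta_1,\dots,\eta_{10})$ of the torsor. These lie in an affine cone cut out by the five Plücker relations and satisfy coprimality conditions. The anticanonical height is a maximum of monomials, so the sharp height condition becomes polynomial inequalities at archimedean places. A congruence condition $x\in W_v$ for $v\in T$ lifts to a union of congruence classes for the torsor variables modulo $\pf_v^{e_v'}$. The number of classes is controlled by the $v$-adic Tamagawa volume: the fibres of the torsor over a residue class have uniform $v$-adic measure.

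\textbf{Step two} is to rerun the counting argument of \cite{BD24} with these extra congruence classes imposed. Its structure is lattice point counting in the first variables, then summation over the remaining ones, with Möbius inversion for the coprimality conditions. That argument treats the heights $H_{\Pc'}$ for all $\Pc'$ in the class $\Sc$ of the remark. So Theorem~\ref{thm:equidistribution-theorem}, applied fibrewise over each congruence class, yields the archimedean part. The local density at $v\in T$ then changes from the full $p$-adic factor to the volume of $W_v$.

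\textbf{Step three} assembles the result. Combining the pieces gives the asymptotic with leading constant $c\,\tau_H(W)$, first for the heights $H_{\Pc'}$. For an arbitrary anticanonical height $H$, I would argue as in Corollary~\ref{cor:adelic-equidistribution}: approximate $H/H_\Pc$ uniformly on $X(\Ab_K)$ from above and below by step functions that are constant on cylinder sets, which is possible since both heights are continuous and agree outside finitely many places. The asymptotic for $H$ and equidistribution then follow by sandwiching.

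\textbf{The main obstacle} is step two. One has to show that imposing congruences modulo a fixed ideal $\qf$ on the torsor variables preserves the error terms of \cite{BD24} uniformly, and that the product of the modified local densities equals exactly $\tau_v(W_v)$ at $v\in T$. The delicate point is the interaction between the congruence conditions and the coprimality/Möbius sums at primes dividing $\qf$. I would first handle this by splitting each Möbius variable into its $\qf$-part and its coprime part. The coprime part goes through the original argument with congruence-class lattices of index $\N\qf^{k}$. For the $\qf$-part, I would verify directly that the resulting finite local computation reproduces the Hensel-lift count of $\Xf(\of_v/\pf_v^{e})$, normalized as in Peyre's measure.
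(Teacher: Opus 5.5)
Your proposal follows essentially the same route as the paper: a congruence-refined rerun of \cite{BD24} on the twisted torsors (Proposition~\ref{prop:dp5}), fed into Theorem~\ref{thm:equidistribution-theorem} via Corollary~\ref{cor:equidistribution-by-counting-mod-q} with $S=\infty_K$, and extended to arbitrary anticanonical heights by the sandwiching argument of the appendix. One refinement: the $\qf$-part of the M\"obius variables is actually trivial, because points of ${}_\cfrb\Yf(\OK/\qf)$ already satisfy the coprimality conditions modulo $\qf$; the local factor at $\pf\mid\qf$ instead comes from a summation lemma over residue classes $\lambda r$ that need not be coprime to $\qf$ (Lemma~\ref{lem:sum_with_congruence}).
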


Points in $U(K)$ of height at most 500 (with a height function $H$ induced by the set $\{P_\sigma, Q_\sigma : \sigma \in S_3\}$ of cubic forms as in~\cite[p.~2]{BD24}) along barycentric coordinates on $\Pd^2$ are shown in Figure~\ref{fig:dp5}. The six lines arising as strict transforms of lines through two centers of the blow-up are shown in gray, while the remaining four lines are exceptional divisors and contracted along this map.
The points of bounded height are color-coded according to the 15 possible residue disks modulo $2$ that they can lie in: three of these are around points away from the exceptional divisors, while each of the four exceptional divisors admits three more points modulo $2$. 
Theorem~\ref{thm:equidistribution_dP5} then implies that the number of brown rational points in the black triangle is asymptotically $1/180$ of the total number of points (the $2$-adic measure is uniform over the residue disks, yielding a factor $1/15$, and the real measure of the triangle yields a factor $1/12$ by its symmetry; in general, the real contribution can be determined by integrating the real density function over that region).
Note that even though the Tamagawa measure corresponding to this height function is symmetric with respect to reflections along the lines, the barycentric coordinate functions do not follow this symmetry. Hence, the density of this measure with respect to the Lebesgue measure of the chart decreases towards the edges of Figure~\ref{fig:dp5}.

\begin{figure}
    \begin{center}
        \includegraphics[width=\linewidth]{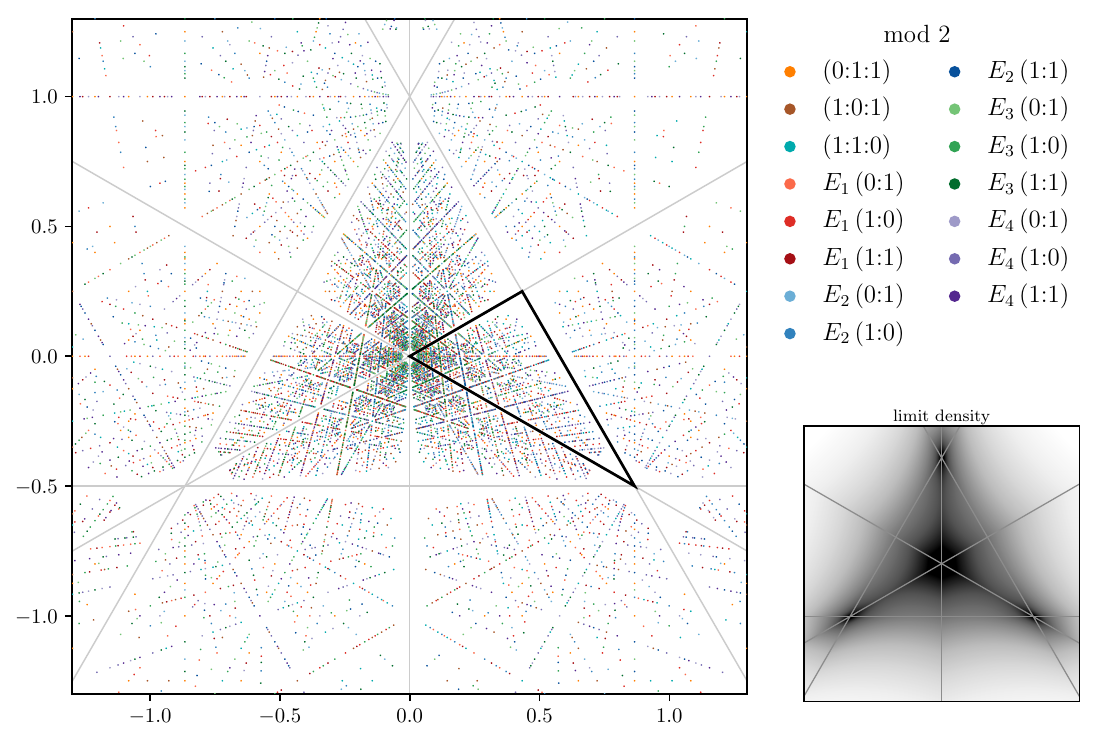}
    \end{center}
    \caption{Rational points of height at most $500$ on a del Pezzo surface $X$ of degree $5$ with a subset of $X(\Rd)$ in the shape of a triangle and the density function for the limit probability measure.
}\label{fig:dp5}
\end{figure}

Several lines in $\Pd^2$ seem to contain particularly many points. These are not accumulating subsets (contributing an order of magnitude of $B$ points, a factor $(\log B)^4$ below the total), but as strict transforms of lines through one of the centers, they have particularly low anticanonical degree $2$ and thus provide particularly good approximants to the points they contain in the spirit of McKinnon's work~\cite{MR2274515} and are \emph{locally accumulating} in a related notion developed by Huang~\cite[Déf~2.1]{MR3667497}. The same holds true for the one-parameter family of conics through all four points.

\subsection{Other approaches to equidistribution}

When dealing with larger classes of varieties, a trick can help with the finite places: congruence conditions can be modeled by linear changes of variables, and if the counting theorem at hand is general enough to treat the varieties resulting from such transformations, only archimedean places remain to be treated; in the context of applications of the circle method, a straightforward approach to this is to modify the smooth weight functions that the method tends to involve anyways. This way, Peyre~\cite[\S 5.5]{Pey95} deduces equidistribution for hypersurfaces in large dimensions based on Birch's seminal work~\cite{Birch}, while Heath-Brown and Loughran~\cite[Theorem~1.5]{HBL25} provide a more recent instance of this method.
More concretely, in the special case of toric varieties, lifting to universal torsors has been used to obtain equidistribution theorems in a rational~\cite{ZH21} and motivic~\cite{Faisant} setting, albeit not using Peyre's above philosophy.
Finally, a converse approach is to establish equidistribution directly (for example using ergodic-theoretic methods as in \cite{GMO} for wonderful compactifications of certain algebraic groups) and to derive Manin's conjecture from this.

\subsection{Structure of the article}

Section~\ref{sec:equidistribution} is devoted to proving Theorem~\ref{thm:equidistribution-theorem} and various corollaries for special situations. In Section~\ref{sec:dp5} we add congruence conditions to the counting problem on split del Pezzo surfaces of degree 5 and use Corollary~\ref{cor:equidistribution-by-counting-mod-q} to obtain equidistribution of rational points (Theorem~\ref{thm:equidistribution_dP5}). The appendix contains the converse of a very general equidistribution theorem by Chambert--Loir and Tschinkel~\cite[Prop.~2.10]{CLT10} that is probably well-known but which we could not find in the literature.

\subsection{Notation and conventions}

Throughout this article, $K$ denotes a number field and $\of_K$ its ring of integers. Write $\Omega_K$ for its set of places and $\infty_K$ for the infinite places. For each $v\in \Omega_K$, equip the local field $K_v$ with an absolute value that is normalized so that $\df x/|x|_v$ is a Haar measure on $K_v^\times$ and the product formula holds; write $K_v^1 = \{x\in K_v : |x|_v=1\}$.

For a finite place $v$, let $\of_v$ denote the ring of integers of $K_v$. Let $\Ab_K$ be the ring of adèles, $\Ab_{\of_K}=\prod_{v\mid\infty}K_v \times \prod_{\pf}\of_\pf$ the ring of integral adèles, and $\Ib_K = \Gm(\Ab_K)= \prod'_{v\in \Omega_K} (K_v^\times, \of_v^\times)$ the group of idèles. Write
\begin{equation*}
    |\cdot| \colon \Ib_K \to \Rd_{>0},\quad (x_v)_{v\in\Omega_K} \mapsto \prod_{v \in \Omega_K} |x_v|_v,
\end{equation*}
and $\Ib_K^1 = \ker(|\cdot|)$ for idèles of absolute value $1$. 

For a finite set $S$ of places, let $\of_S$ be the ring of $S$-integers, $\Ab_K^S$ the adèles off $S$, and $K_S = \prod_{v\in S}K_v$. Write $|\cdot|\colon K_S\to \Rd_{\ge 0}$ for the product of absolute values and $K_S^1 = \{(x_v)_v \in K_S : \prod_{v \in S} |x_v|_v=1\}$ for the group of elements of absolute value $1$.

We use the asymptotic notation $o(1)$ for a quantity with limit $0$ as $B \to \infty$, where the rate of convergence may depend on all other parameters involved (in particular on $K$, $X$, $\Pc'$, and $\qf$).

\subsection*{Acknowledgements}

UD was supported by the Deutsche Forschungsgemeinschaft (DFG) -- 512730679 (RTG2965). 
FW was supported by the Deutsche Forschungsgemeinschaft (DFG) -- 398436923 (RTG2491).

\section{An abstract equidistribution theorem}\label{sec:equidistribution}

In this section, we prove Theorem~\ref{thm:equidistribution-theorem} and its corollaries. The first step is to reduce Theorem~\ref{thm:equidistribution-theorem} to the case where $X=\Pd^n$.

Without loss of generality, assume that $L$ is very ample by replacing it with $dL$, the variety $X$ by $f(X)\subset \Pd^n$ with $n=|\Pc|-1$, and the set $V$ by the multiset $f(V)$; note that the multiplicities are finite as~\eqref{eq:asymptotic-hypothesis} implies the Northcott property for $V$, hence $f(V)$. To prove the equidistribution theorem, we moreover pass to the affine cone of $X\subset \Pd^n$, obtain an equidistribution theorem on it, and then push down all involved measures to obtain Theorem~\ref{thm:equidistribution-theorem}. On projective space, the line bundle $L$ now becomes $\Oc(1)$ and the set $\Pc$ corresponds to the basis $x_0,\dots,x_n$ of $\Hr^0(\Pd^n,\Oc(1))$ inducing the standard height function. Thus, replacing $\tau_{\Pc'}$ by $f_*\tau_{\Pc'}$ (which preserves~\eqref{eq:tamagawa-relative-heights} as $H_{\Pc}(x) = H_{\Pd^n}(f(x))$), we may assume that $X=\Pd^n$ and $\Pc = \{x_0,\dots,x_n\}$.

\subsection{Cones over adelic points}

For an integer $n\ge 1$, we begin by studying projective space $\Pd^n_K$ and its affine cone $C = \Ad_K^{n+1}\setminus \{0\}$. The action of $\Gm$ on the affine cone induces an action of $\Ib_K$ and its subgroup $\Ib_K^1$ on $C(\Ab_K)$.
Write $A$ for the quotient $C(\Ab_K)/\Ib_K^1$.
As $K^\times \subset \Ib_K^1$, the map
\begin{equation*}
    i\colon \Pd^n(K) \to A,\quad (x_0:\dots:x_n)\mapsto ((x_0,\dots, x_n))_{v\in\Omega_K}
\end{equation*}
is well-defined.
Let $\pi\colon A \to \Pd^n(\Ab_K)$ be the projection. Moreover, for each place $v$, let
\begin{equation*}
    \norm{\cdot}_v\colon K_v^{n+1}\to \Rd_{>0},\quad (x_0,\dots,x_n)\mapsto \max\{|x_0|_v,\dots,|x_n|_v\}
\end{equation*}
be the supremum norm; their product factors through $A$:
\begin{equation*}
    \norm{\cdot}\colon A \to \Rd_{>0},\quad (x_v)_v\mapsto \prod_v \norm{x_v}_v.
\end{equation*}
These two maps induce an isomorphism $A\cong \Pd^n(\Ab_K) \times \Rd_{>0}$; its inverse maps an adelic point on $\Pd^n$ and a constant $c>0$ to the unique representative $\tilde \xb\in A$ with $\norm{\tilde \xb}=c$. We shall moreover be interested in the subspace $A^\br\cong \Pd^n(\Ab_K)\times (0,1)$ of points of norm smaller than $1$. 

Write $A_S = \prod_{v\in S} C(K_v) / K_S^1$ (with $A_S\cong \Pd^n(K_S) \times \Rd_{>0}$) to get a decomposition
\begin{equation}\label{eq:decomp-A-AS}
    \begin{tikzcd}[row sep = tiny]
        A_S \times \Pd^n(\Ab_K^S) \arrow[r,"\sim"] & A, \\
        ((\yb^v)_{v\in S} K_S^1,\ (\xb^v)_{v\not\in S}) \arrow[r,mapsto] & (\yb^v)_{v\in\Omega_K} \Ib_K^1,
    \end{tikzcd}
\end{equation}
where $\yb^v$ is a lift of $\xb^v$ to $C(K_v)$ with $\norm{\yb^v}_v=1$ for each $v\not\in S$, which restricts to an isomorphism $A_S^\br\times \Pd^n(\Ab_K^S)\cong A^\br$, where $A_S^\br$ is defined analogously to $A^\br$. The projection to the first components maps $\ybb \Ib_K^1\in A$ to $(\tilde{\yb}^v)_{v\in S}$, where $\tilde{\ybb}$ is a representative of $\ybb \Ib_K^1$ such that $\norm{\tilde{\yb}^v}_v = 1$ for all $v\not\in S$. 
For each $v\in S$, define
\begin{equation*}
   A_v^\br = \begin{cases}
    \{ \yb\in C(K_v) : |\yb|_v<1\}/K_v^1 & \text{if $v$ is archimedean and} \\
    \{ \yb\in C(K_v) : |\yb|_v\le 1\}/K_v^1 & \text{otherwise.}
    \end{cases}
\end{equation*}
Note that $\prod_{v\in S}A_v^\br \to A_S^\br$ is surjective. If $\Pc'\subset \Hr^0(\Pd^n, \Oc(1))$ is a generating set of sections, let
\begin{equation*}
    H_{\Pc',v}\colon K_v^{n+1}\setminus \{0\} \to \Rd_{>0},\quad \xb\mapsto \max_{s\in \Pc'}|s(\xb)|_v.
\end{equation*}
The product $H_{\Pc'} = \prod_v H_{\Pc', v}\colon C(\Ab_K)\to \Rd_{>0}$ is well-defined as almost all factors are $1$ for given $\xb\in C(\Ab_K)$ and factors through $A$.
The composition with $i$ is the height function $H_{\Pc'}$ associated with the norm on $\Oc(1)$ arising from pulling back the standard sup-norm along the morphism $\Pd^n\to \Pd^{|\Pc'| - 1}$ induced by $\Pc'$.

\subsection{Measures}

Let $\mu$ be a measure on $\Pd^n(\Ab_K)$ and $H$ be a height function. Define a measure $\mu^H$ on $A \cong \Pd^n(\Ab_K)\times \Rd_{>0}$ by setting
 \begin{equation}\label{eq:cone-measure-adeles}
    \df \mu^H = \ind_{\{H \le 1\}} \frac{a H(\xb)^a}{\norm{\xb}} \df \mu(\pi(\xb)) \df\norm{\xb}.
 \end{equation}
 
\begin{lemma}\label{lem:pushforward-measure-cone}
    Then $\pi_*\mu^H = \mu$.
\end{lemma}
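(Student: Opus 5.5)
We use the identification $A\cong \Pd^n(\Ab_K)\times\Rd_{>0}$, $\xb\mapsto(\pi(\xb),\norm{\xb})$, and compute $\mu^H$ fibrewise over $\Pd^n(\Ab_K)$ with Fubini. The only input is how $H$ scales along the fibres of $\pi$. Here $H$ is a height function factoring through $A$ as above, e.g.\ $H=H_{\Pc'}$. For each place $v$, every $s\in\Pc'$ is linear, so $H_{\Pc',v}(\lambda\yb)=|\lambda|_v H_{\Pc',v}(\yb)$; the same holds for $\norm{\cdot}_v$. Hence for an idèle $\lambda$ we get $H(\lambda\xb)=|\lambda|H(\xb)$ and $\norm{\lambda\xb}=|\lambda|\norm{\xb}$. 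It follows that the ratio $h(P):=H(\xb)/\norm{\xb}$ depends only on $P=\pi(\xb)$, and that $h$ is a measurable (indeed continuous) positive function on $\Pd^n(\Ab_K)$. Moreover, the fibre $\pi^{-1}(P)$ is parametrized by $t=\norm{\xb}\in\Rd_{>0}$, and on it $H(\xb)=h(P)\,t$.

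For a measurable $W\subset\Pd^n(\Ab_K)$, Tonelli gives
\begin{equation*}
\mu^H(\pi^{-1}W)=\int_W\int_0^\infty \ind_{\{h(P)t\le1\}}\,\frac{a\,h(P)^a t^a}{t}\,\df t\,\df\mu(P).
\end{equation*}
We then evaluate the inner integral for fixed $P$:
\begin{equation*}
\int_0^{1/h(P)} a\,h(P)^a t^{a-1}\,\df t = h(P)^a\cdot h(P)^{-a}=1,
\end{equation*}
using $a>0$ for convergence at $0$. Therefore $\mu^H(\pi^{-1}W)=\mu(W)$, which is the claim $\pi_*\mu^H=\mu$.

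The main obstacle, though a mild one, is justifying the homogeneity of $H$ under $\Ib_K$ rather than just under $\Ib_K^1$, together with the measurability of $h$ needed for Fubini. Homogeneity follows place by place from the linearity of the sections. For measurability, I would use that the sections in $\Pc'$ have no common zeros, so $H_{\Pc',v}$ is continuous and positive, and that for all but finitely many $v$ the local factor $H_{\Pc',v}/\norm{\cdot}_v$ equals $1$, because $\Pc'$ generates $\Oc(1)$ integrally there. Consequently $h$ is a finite product of continuous positive functions and thus continuous.
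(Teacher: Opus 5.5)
Your proof is correct and follows the same route as the paper: integrate $\mu^H$ fibrewise over $\pi^{-1}(P)\cong\Rd_{>0}$, use the degree-one homogeneity of $H$ to turn the cutoff $H\le 1$ into $t\le 1/H(\tilde\xb)$ for the norm-one lift $\tilde\xb$, and observe that the inner integral equals $1$. You additionally carry out that inner integral explicitly and justify the idelic homogeneity and measurability of $h$, which the paper leaves implicit.
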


\begin{proof}
    Let $W\subset \Pd^n(\Ab_K)$ be measurable and $\pi^{-1}(W)$ be its cone in $A$. Then
    \begin{equation*}
        \int_{\yb \in \pi^{-1}(W)} \df \mu^H
        = \int_{\xb\in W}\int_{\substack{t\in \Rd_{>0}\\ H(t \tilde \xb)\le 1}}
        \frac{a H(t\tilde \xb)^a}{t}\df \mu \df t,
    \end{equation*}
    where $\tilde \xb$ is, as before, a lift of $\xb\in W\subset C(\Ab_K)$ to $A$ with $\norm{\tilde \xb}=1$. Indeed, note that $H(t\tilde \xb)\le 1$ is equivalent to $t\le 1/H(\tilde \xb)$, and $H(t\tilde \xb)^a = t^a H(\tilde\xb)^a$ as $H$ is homogeneous of degree $1$, being induced by linear polynomials.
\end{proof}

 Let $\tau = \tau_H$ be the measure associated with the standard height function $H(\xb) = \prod_v\norm{\xb}$ and $\tau_A = a \norm{\yb}^{a-1}\df \mu \df \norm{\yb}$ its lift to $A$ without the indicator function in~\eqref{eq:cone-measure-adeles}. If $H_1$ and $H_2$ are further height functions, then
\begin{equation}\label{eq:difference-of-measures}
    \df \tau_{H_1} - \df \tau_{H_2} = \pi_*\left((\ind_{\{H_1\le 1\}} - \ind_{\{H_2\le 1\}})\df \tau_A  \right)
\end{equation}
by Lemma~\ref{lem:pushforward-measure-cone} and~\eqref{eq:cone-measure-adeles}.

\subsection{A basis of the affine cone}

The remaining crucial ingredient for equidistribution is to construct indicator functions of a basis of its topology as linear combinations of the indicator functions appearing in~\eqref{eq:difference-of-measures}. Let $v$ be a place of $K$.
The sets
\begin{equation}\label{eq:basis-epsilon}
    U_{\yb,\epsilon} = \left\{(x_0,\dots, x_n)\in K_v^{n+1} : 
    \substack{1-\epsilon< \norm{\xb}_v/\norm{\yb}_v<1+\epsilon,\\
    |x_iy_j-x_jy_i|_v<\epsilon 
    \text{ for all }0\le i,j\le n }\right\},
\end{equation}
where $\yb$ ranges over $A_v^\br$ and $\epsilon$ over small positive real numbers, form a basis of the topology of $A_v^\br$ by approximating the norm and direction of an arbitrary point $\yb$.  (For finite places and sufficiently small $\epsilon$, note that the two norms $\norm{\xb}=\norm{\yb}$ are equal and achieved by the same index $k$ through the ultrametric inequality; then normalize by $c=y_k/x_k\in K_v^1$ to get $|cx_i-y_i|<\epsilon/\norm{\yb}$ for all $i$.) Their products thus form a basis of the topology of $\prod_{v\in S}A_v^\br$, and the images of such products one of $A_S^\br$.

\subsection{Some geometry of numbers}

The next step is to approximate these sets with a variant that draws its coefficients from $\of_S$.

\begin{lemma}\label{lem:geometry-of-numbers}
    Let $S$ be a finite set of places containing the archimedean ones. Let $(y^v)_v \in \prod_{v\in S} K_v^n\setminus \{0\}$, and let $\epsilon$ and $\kappa$ be positive real numbers. Then there is a positive real number $\kappa_v$ for each $v\in S$ such that the following holds true.
    \begin{enumerate}
        \item There are $S$-integers $d_1$ and $d_2$ such that
        \begin{itemize}
            \item $|d_1|_v,|d_2|_v>1$ for all $v\in S$,
            \item $(1-\epsilon) \norm{y^v} < \kappa_v/|d_1|_v < \norm{y^v} < \kappa_v/|d_2|_v < (1+\epsilon)\norm{y^v}$ for every archimedean $v\in S$, and
            \item $\kappa_v/|d_1|_v = \kappa_v/|d_2|_v = \norm{y^v}$ for every finite $v\in S$.
        \end{itemize}
        \item For every $0\le i,j\le n$, there are $u_{i,j}$ and $w_{i,j}$ in $\of_S$ such that
        \begin{itemize}
            \item $\max\{|u_{i,j}|_v,|w_{i,j}|_v\}>\kappa\kappa_v$ for all $v\in S$ and
            \item $|u_{i,j}y^v_i-w_{i,j}y^v_j|_v<\kappa_v$ for all $v\in S$.
        \end{itemize}
    \end{enumerate}
\end{lemma}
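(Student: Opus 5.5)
The plan is to exploit that $\kappa_v$ may be chosen freely, hence large. I would build $d_1,d_2$ as a common large $S$-integral factor $t^k$ times two nearby rational integers. I then take $\kappa_v$ to be $\norm{y^v}$ times a suitable geometric mean of $|d_1|_v$ and $|d_2|_v$. Part (2) then follows from compactness of $K_S/\of_S$: the approximation error is bounded independently of $k$, while $\kappa_v$ grows with $k$.

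\emph{Step 1: an $S$-integer that is large at all finite places of $S$.} Let $h$ be the class number. For each finite $\pf\in S$, $\pf^h=(g_\pf)$ with $g_\pf\in\of_K$. Then $g_\pf$ is a unit in $\of_S$, so $t=\prod_{\pf\in S}g_\pf^{-1}\in\of_S$. It satisfies $|t|_\pf>1$ for each finite $\pf\in S$, while $|t|_v$ is some fixed positive number at the archimedean places.

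\emph{Step 2: construction of $d_1,d_2$ and $\kappa_v$.} Let $P$ be the product of the residue characteristics of the finite places in $S$. Put $m_1=P(N+1)+1$ and $m_2=PN+1$ with $N\in\Zd_{>0}$ large. Both are units at every finite $\pf\in S$, so $|m_1|_\pf=|m_2|_\pf=1$. At archimedean $v$ we have $|m_1|_v/|m_2|_v=(m_1/m_2)^{[K_v:\Rd]}\to1$ from above as $N\to\infty$. Choose $N$ so that this ratio is less than $(1+\epsilon)^2$ and less than $(1-\epsilon)^{-2}$ for all archimedean $v$. Then set $d_1=t^km_1$ and $d_2=t^km_2$, with $k$ large enough that $|d_1|_v,|d_2|_v>1$ for all $v\in S$. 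Define
\[
\kappa_v=\norm{y^v}\,(|d_1|_v|d_2|_v)^{1/2} \text{ for archimedean } v, \qquad \kappa_v=\norm{y^v}\,|d_1|_v \text{ for finite } v\in S.
\]
Since $|d_1|_v>|d_2|_v$ and the ratio is close to $1$, the chain of inequalities in (1) holds at archimedean $v$. At finite $v$ it holds with equality because $|d_1|_v=|d_2|_v$. Moreover every $\kappa_v$ contains the factor $|t|_v^k$, so all $\kappa_v\to\infty$ as $k\to\infty$. I keep $k$ as a parameter, to be increased further in Step 3.

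\emph{Step 3: part (2).} Fix $i,j$. If $y^v_i=0$ for some $v$, I would switch to a case split handled coordinatewise, or choose $w_{i,j}=0$ when $y^v_i=y^v_j=0$. Treating this degenerate configuration correctly at each place separately is the fiddly point. Note that $y^v_i$ and $y^v_j$ may vanish at some places of $S$ but not at others, so the construction must still work when vanishing is only partial.

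In the generic case $y^v_i\ne0$ for all $v$, let $w\in\of_S$ be huge, to be fixed below. Since $\of_S$ is cocompact in $K_S$, there is a constant $C_v$ depending only on $K$ and $S$, and some $u\in\of_S$, with
\[
|u-w\,y^v_j/y^v_i|_v\le C_v \quad\text{for all } v\in S.
\]
This gives $|u y^v_i-w y^v_j|_v\le C_v|y^v_i|_v$, which is less than $\kappa_v$ once $k$ is large. Here $C_v$ is independent of $w$ and $k$.

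For the lower bound, take $w=t^{k'}M'$ with $M'$ a large rational integer coprime to $P$ and $k'\gg k$. Then $|w|_v>\kappa\kappa_v$ for all $v\in S$, and so $\max\{|u|_v,|w|_v\}>\kappa\kappa_v$. The order of choices is: first $N$, then $k$ (large relative to the $C_v$ and the $y^v$), then $w$, then $u$.

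The main obstacle is bookkeeping these quantifiers: the $\kappa_v$ must be fixed before the $(u_{i,j},w_{i,j})$ are chosen, yet be large relative to the approximation constants $C_v$. This is resolved because the $C_v$ depend only on $K$ and $S$, so $k$ can be chosen after them.
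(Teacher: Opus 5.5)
Your overall route is viable: you use a bounded fundamental domain for $\of_S$ in $K_S$, where the paper instead shows that the region $U_{Q,R}$ of pairs near the line $uy^v_i=wy^v_j$ with norm in an annulus has volume growing faster than its boundary, and then invokes a lattice-point lemma. However, Steps 1--2 contain an error that breaks the argument as written.

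Your $t=\prod_{\pf\in S}g_\pf^{-1}$ is an $S$-unit with $|t|_\pf=\N\pf^{h}>1$ at every finite $\pf\in S$. The product formula then forces $\prod_{v\mid\infty}|t|_v=\prod_{\pf\in S}\N\pf^{-h}<1$, so $|t|_v<1$ at some archimedean place $v$ (and $t=1$ if $S=\infty_K$). At such a $v$ you get $|d_i|_v=|t|_v^k|m_i|_v\to 0$ and $\kappa_v\to 0$ as $k\to\infty$. So no large $k$ gives $|d_1|_v,|d_2|_v>1$, and your claim that all $\kappa_v\to\infty$ is false. For instance, for $K=\Qd$ and $S=\{\infty,2\}$ you have $t=1/2$ and $d_i=2^{-k}m_i$. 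Step 3 needs $C_v\norm{y^v}<\kappa_v$ at every $v\in S$ simultaneously, so it collapses too.

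The repair is easy but necessary. Take $t=M\prod_{\pf\in S}g_\pf^{-1}$ with $M$ a large rational integer coprime to $P$, so that $|t|_v>1$ for all $v\in S$ while $|d_1|_\pf=|d_2|_\pf$ is preserved. Alternatively, fix $k$ first for the finite places and only then take $N$ large. This enlarges $|m_i|_v$ at the archimedean places while pushing $m_1/m_2$ towards $1$.

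Second, the degenerate case is not optional. The lemma allows arbitrary $y^v$, and the basis lemma applies it to arbitrary points of $A_v^\br$, so $y^v_i$ may vanish at some places of $S$ and not at others. Your division by $y^v_i$ is then unavailable, and you leave this unresolved. It is handled uniformly by not dividing at all:
\begin{itemize}
\item For each $v\in S$, pick a point $(u^*_v,w^*_v)$ on the line $uy^v_i=wy^v_j$ of large norm, e.g.\ $\lambda_v(y^v_j,y^v_i)$, or $(\lambda_v,0)$ if $y^v_i=y^v_j=0$.
\item Approximate $(u^*_v,w^*_v)_{v\in S}$ within bounded distance by a point $(u,w)$ of $\of_S^2$, which is cocompact in $K_S^2$.
\item Then $uy^v_i-wy^v_j=(u-u^*_v)y^v_i-(w-w^*_v)y^v_j$ is $O(\norm{y^v})$ independently of $\lambda_v$, while $\max\{|u|_v,|w|_v\}$ is as large as you like.
\end{itemize}
With both repairs your argument gives a complete proof that is more elementary than the paper's volume-versus-boundary count.
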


\begin{proof}
The first claim is clear as soon as $\kappa_v$ is sufficiently large (and an absolute value if $v$ is finite); the Lemma will thus follow once we have proved the second claim for all sufficiently large $\kappa_v$.
Fix $i$ and $j$ for now, and let
\begin{equation*}
    U^v = U^v_{Q,R}= \left\{(u_{i,j},w_{i,j})\in K_v^2 :
    \substack{
        |u_{i,j}y^v_i-w_{i,j}y^v_j|_v\le Q,\\
        \kappa_v Q \le \norm{(u_{i,j},w_{i,j})}\le \kappa_v Q+R 
    }\right\}
\end{equation*}
and $U=U_{Q,R} = \prod_{v\in S}U^v_{Q,R}$.
Our aim is to guarantee the existence of a lattice point in $U$ for all sufficiently large $Q$ and $R$. This will follow once we establish that the Haar measure of $U_{Q,R}$ grows more quickly than that of its boundary, see e.g.~\cite[Lem.~3.2]{MR4248638}.
Concretely, we study the $F$-boundary of $U_{Q,R}$ for compact sets $F$, that is
\begin{equation*}
    \partial_{F} U_{Q,R} = (\overline{U_{Q,R}}+F)\setminus U_{Q,R}^\circ \cup (\overline{U_{Q,R}^c} +F) \setminus (U_{Q,R}^c)^\circ.
\end{equation*}
After applying a linear change of variables, shrinking the sets, and appropriately modifying $Q$ and $R$, we may assume
\begin{equation*}
    U^v = \{(u,w)\in K_v^2 : |u|_v\le Q,\ \kappa Q \le |w|_v \le \kappa Q+R \}.
\end{equation*}
In particular, $U^v$ has volume $\gg_v RQ$ for sufficiently large $Q$ and $R$, so that $U$ has volume $\gg (QR)^s$ where $s = |S|$.

In order to leverage the identity
\begin{equation}\label{eq:boundary-product}
    \partial_{F_1\times F_2}(U_1\times U_2) = \big(\partial_{F_1}U_1 \times (U_2+F_2)\big) \cup \big((U_1+F_1)\times \partial_{F_2}U_2\big)
\end{equation}
for the boundary of a product, we shall work with boxes $F_v = B_1(0,r)^2 \subset K_v^2$ of ``radius'' $r\in \im|\cdot|_v$ around the origin. 

If $v$ is archimedean, then $F_v+U^v$ has volume $O((Q+r)(R+r)) = O_r(QR)$, while  $\partial_{F_v} U^v$ has volume $O(rQ+rR+r^2) = O_r(Q+R)$ based on~\eqref{eq:boundary-product}.
If $v$ is finite, then $\partial_{F_v} U^v=\emptyset$ as soon as $\min\{Q,\kappa Q\}>r$, as $U^v+F_v = U^v$ and $(U^v)^c + F_v = (U^v)^c$ by the ultrametric inequality.

Setting $F = \prod_{v\in S} F_v$ and appealing to~\eqref{eq:boundary-product} again, the $F$-boundary of the product set $U$ has a volume of at most
$\mu(\partial_F U) = O_r((QR)^{s-1}(Q+R))$. It follows that 
\begin{equation*}
    \frac{\mu(\partial_F U_{Q,R})}{\mu(U_{Q,R})} \ll_r \frac{(QR)^{s-1}(Q+R)}{(QR)^s}\to 0
\end{equation*}
for fixed $F$, as $Q,R \to \infty$. As every compact set is eventually contained in one of the sets $F$, the assertion follows.
\end{proof}

\subsection{A basis of the adelic affine cone}

Let $\Lc = \{l_1,\dots, l_s\}$ be a set of linear forms in $\of_S\langle x_0,\dots, x_n\rangle$, let $\db = (d_1,d_2)\in \of_S^2$ be a tuple of $S$-integers with $|d_1|_v\ge|d_2|_v \ge 1$ for all $v\in S$ such that the inequality $|d_1|_v>|d_2|_v$ is strict for all archimedean $v$, and let $\kappa >0$ be a real number decomposed into $\kappa = \prod_{v\in \Omega_K} \kappa_v$ with $\kappa_v>0$ and $\kappa_v = 1$ for all $v\not\in S$. Given these data and $i\in \{1,2\}$, define a set
\begin{equation*}
    \Pc_{\db,\Lc,i} = \{x_0,\dots, x_n, d_ix_0,\dots,d_ix_n, l_1,\dots, l_s\},\\
\end{equation*}
of polynomials and local factors $H_{v,\db,\Lc,\kappa_v,i}\colon C(K_v)\to \Rd_{>0}$ for 
\begin{equation*}
 H_{v,\db,\Lc,\kappa_v,i} (\xb) =  
        \kappa_v^{-1}\max\left\{ \substack{
        |x^v_0|,\dots,|x^v_n|,\\ |d_ix^v_0|,\dots, |d_ix^v_n|, \\
        |l_1(\xb^v)|,\dots, |l_s(\xb^v)|
    }\right\}
\end{equation*}
of the height functions $H_{\db,\Lc,\kappa,i}\colon A \to \Rd_{>0}$ with
\begin{equation*}
    H_{\db,\Lc,\kappa,i} = \kappa^{-1}H_{\Pc_{\db,\Lc,i}} = \prod_v H_{v,\db,\Lc,\kappa_v,i}.
\end{equation*}
These two height functions factor through $\pi_S\colon A\to A_S$ via
\begin{equation*}
    H_{S,\db,\Lc,\kappa,i} = \prod_{v\in S}H_{v,\db,\Lc,\kappa_v,i};
\end{equation*}
indeed, note that $H_{v,\db,\Lc,\kappa_v,i}(\yb^v) = \norm{\yb^v}$ for all $\yb^v\in C(K_v)$ by the ultrametric inequality and the restrictions on the coefficients of the linear forms and the $d_i$, and that this norm is one for the lifts $\yb^v$ in~\eqref{eq:decomp-A-AS}.
Based on this, let
\begin{equation*}
    U_{S,\db,\Lc,\kappa} = \{H_{S,\db,\Lc,\kappa,1} < 1\} \setminus \{H_{S,\db,\Lc,\kappa,2} \le 1\} \subset A_S,
\end{equation*}
and write 
\begin{equation*}
     \overline{U}_{S,\db,\Lc,\kappa} = \{H_{S,\db,\Lc,\kappa,1} \le 1\} \setminus \{H_{S,\db,\Lc,\kappa,2} <1 \}
\end{equation*}
for the closures of the latter sets. Then $U_{\db,\Lc,\kappa} = \pi_S^{-1}(U_{S,\db,\Lc,\kappa})$ satisfies
\begin{equation*}
    U_{\db,\Lc,\kappa} = \{H_{\db,\Lc,\kappa,1} < 1\} \setminus \{H_{\db,\Lc,\kappa,2} \le 1\} \subset A
\end{equation*}
and $\overline{U}_{\db,\Lc,\kappa} = \pi_S^{-1}(\overline{U}_{S,\db,\Lc,\kappa})$ can be described analogously.
We shall test for convergence on the family
\begin{equation*}
    \Uc_S = 
    \left\{
        U_{S,\db,\Lc,\kappa}:
        \substack{
            \db=(d_1,d_2)\in \of_S^2 \text{ with } |d_1|_v\ge |d_2|_v\ge 1 \text{ for all } v\in S,\\ \Lc\subset \of_S\langle x_0,\dots,x_n\rangle,\ \kappa\in \Rd_{>0}
        }
    \right\}
\end{equation*}
of open subsets of $A^\br_S$ and its closure $\widetilde{\Uc}_S$ under finite intersections by means of its preimage
\begin{equation*}
    \Uc = 
    \left\{
        U_{\db,\Lc,\kappa}:
        \substack{
            \db=(d_1,d_2)\in \of_S^2 \text{ with } |d_1|_v\ge |d_2|_v\ge 1 \text{ for all } v\in S,\\ \Lc\subset \of_S\langle x_0,\dots,x_n\rangle,\ \kappa\in \Rd_{>0}
        }
    \right\}
\end{equation*}
in $A^\br$ and its closure $\widetilde{\Uc}$ under finite intersections.

\begin{lemma}\label{lem:lemma-product-of-height-annuli}
    These sets satisfy
    \begin{equation*}
        \overline{U}_{\db,\Lc,\kappa} = \left(\prod_{v\in \Omega_K} (\{H_{v,\db,\Lc,\kappa_v,1}\le 1\} \setminus \{H_{v,\db,\Lc,\kappa_v,2}<1\})\right) \Ib_K^1 \subset A
    \end{equation*}
    and analogously for $\overline{U}_{S,\db,\Lc,\kappa}$.
\end{lemma}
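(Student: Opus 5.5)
Both sides of the claimed identity are invariant under $\Ib_K^1$, so I would work with representatives. I would prove the two inclusions separately, using one structural fact: for $\xb\in C(\Ab_K)$, the global heights are $H_{\db,\Lc,\kappa,i}(\xb)=\prod_v H_{v,\db,\Lc,\kappa_v,i}(\xb^v)$. Moreover $H_{v,\db,\Lc,\kappa_v,1}=H_{v,\db,\Lc,\kappa_v,2}=\norm{\xb^v}_v$ for $v\notin S$, by the ultrametric inequality and the integrality of the coefficients of $d_i$ and the $l_j$ at such $v$. Also $H_{v,\cdot,1}\ge H_{v,\cdot,2}$ at every place, since $|d_1|_v\ge|d_2|_v\ge 1$. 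Note that $\overline{U}_{\db,\Lc,\kappa}=\{H_1\le 1\}\setminus\{H_2<1\}=\{H_1\le1,\ H_2\ge1\}$. The same reasoning applies verbatim to $\overline U_{S,\db,\Lc,\kappa}$ with $K_S^1$ in place of $\Ib_K^1$.

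\emph{Inclusion ``$\supseteq$''.} Take $\xb=(\xb^v)_v$ with $H_{v,1}(\xb^v)\le1$ and $H_{v,2}(\xb^v)\ge 1$ for all $v$. Multiplying over $v$ gives $H_1(\xb)\le 1$ and $H_2(\xb)\ge1$. Since both heights are $\Ib_K^1$-invariant, the whole orbit lies in $\overline U_{\db,\Lc,\kappa}$. The product over all $v$ makes sense: almost all $\xb^v$ are primitive, so almost all factors equal $1$.

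\emph{Inclusion ``$\subseteq$''.} Take $\xb$ with $H_1(\xb)\le 1\le H_2(\xb)$. I need an idèle $t\in\Ib_K^1$ with $H_{v,1}(t_v\xb^v)\le 1\le H_{v,2}(t_v\xb^v)$ for every $v$.
\begin{itemize}
\item For $v\notin S$, I would first use $t_v$ to make $\xb^v$ primitive, so that both local heights equal $1$. This is possible since $\norm{\cdot}_v$ takes values in $|K_v^\times|_v$, and $\xb^v$ is already primitive for almost all $v$.
\item This leaves a point of $A_S$ with $\prod_{v\in S}H_{v,1}\le 1\le\prod_{v\in S}H_{v,2}$. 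Each local factor is homogeneous of degree one: $H_{v,i}(t_v\yb)=|t_v|_vH_{v,i}(\yb)$. So it suffices to find $(t_v)_{v\in S}$ with $\prod_{v\in S}|t_v|_v=1$ and $|t_v|_v\in[1/H_{v,1}(\xb^v),1/H_{v,2}(\xb^v)]$ for each $v\in S$.
\item Each interval is nonempty because $H_{v,1}\ge H_{v,2}$. The product of the lower endpoints is $1/H_1\ge1$ and the product of the upper endpoints is $1/H_2\le 1$, the wrong way round. I need to recheck this: the condition on $t_v$ is $|t_v|H_{v,1}\le1$, which gives upper bounds $|t_v|\le 1/H_{v,1}$, and $|t_v|H_{v,2}\ge1$, which gives lower bounds $|t_v|\ge 1/H_{v,2}$. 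The interval is therefore $[1/H_{v,2},1/H_{v,1}]$, nonempty, with product of lower ends $1/H_2\le1$ and product of upper ends $1/H_1\ge 1$. By intermediate-value reasoning there is a choice of $|t_v|_v$ in these intervals with product exactly $1$.
\end{itemize}

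The main obstacle is realizing these absolute values at finite $v\in S$, where $|t_v|_v$ must lie in the discrete group $|K_v^\times|_v$. Here I would use that, for finite $v\in S$, all coefficients are $S$-integral, so the local heights $H_{v,i}$ take values in $|K_v^\times|_v$. Combined with $|d_1|_v\ge|d_2|_v$, this means the endpoints of the interval at a finite $v$ are themselves attained values. I would therefore fix the finite components at admissible endpoints, for instance $|t_v|=1/H_{v,1}$. Then I would absorb all the remaining freedom into the archimedean places, where $|K_v^\times|_v=\Rd_{>0}$ is continuous. Since there is at least one archimedean place and each archimedean interval is a genuine interval with $|d_1|_v>|d_2|_v$, a continuity argument over the archimedean coordinates achieves total product $1$. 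I would double-check that fixing the finite components at the upper endpoint still leaves the product range over the archimedean intervals straddling the required value. It does: the total upper product is at least $1$ and remains so, while the lower product can only decrease if instead we had chosen the lower endpoints, so one side must be adjusted. If needed, I would choose the finite components so that the product of the finite choices lies between the finite lower and upper products, and this finite product is automatically a value in $|K_v^\times|$.
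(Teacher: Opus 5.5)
Your strategy is the paper's: rescale a point of $\overline{U}_{\db,\Lc,\kappa}$ by a norm-one idèle so that each local component lands in its local annulus, with the non-archimedean components pinned down and all slack absorbed at the archimedean places. The gap is at the step you yourself flag, the finite places $v\in S$. If $|d_1|_v>|d_2|_v$ there, the local annulus allows only a discrete set of values of $\norm{\cdot}_v$. Your final adjustment (choose the finite components so that their product lies between the finite lower and upper products) does not guarantee that the archimedean range covers what remains. In that generality the statement is in fact false. Take $K$ to be the rationals, $S=\{\infty,2\}$, $d_1=3/2$, $d_2=1$, $\Lc=\{x_0\}$ and $\kappa_\infty=\kappa_2=1$. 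For $v\in S$ one has $H_{v,\db,\Lc,\kappa_v,i}=|d_i|_v\norm{\cdot}_v$, so the two global heights are $3\norm{\cdot}$ and $\norm{\cdot}$. The non-degenerate set, where the smaller height is $\le 1$ and the larger is $\ge 1$, is therefore $\{1/3\le\norm{\xb}\le 1\}$. The $\Ib_K^1$-saturation of the product of the corresponding local sets only contains points with $\norm{\xb}\in[2/3,1]\cdot\{1/2,1\}=[1/3,1/2]\cup[2/3,1]$, because $\norm{\cdot}_2$ is forced into $\{1/2,1\}$.

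The missing ingredient is the one the paper's proof uses: at \emph{every} non-archimedean place the two local heights coincide. This is automatic for $v\notin S$. For finite $v\in S$ it holds because the sets actually needed are built with $|d_1|_v=|d_2|_v$ there (see the proof that $\Uc_S$ is a basis). Then $|t_v|_v=H_{v,\db,\Lc,\kappa_v,1}(\xb^v)^{-1}$ is forced at all non-archimedean $v$. The two global inequalities say exactly that the required product $\prod_{v\mid\infty}|t_v|_v$ lies between the products of the archimedean endpoints, and the intermediate value theorem finishes.

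There are two further corrections. First, at a finite $v\in S$ the local heights take values in $\kappa_v^{-1}|K_v^\times|_v$. $S$-integrality of the coefficients plays no role there; what you need is $\kappa_v\in|K_v^\times|_v$, which the paper assumes tacitly (``an absolute value if $v$ is finite''). Otherwise, even with $|d_1|_v=|d_2|_v$, the local factor at $v$ is empty while the left-hand side need not be.

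Second, your orientation is inconsistent. Given $H_{v,\db,\Lc,\kappa_v,1}\ge H_{v,\db,\Lc,\kappa_v,2}$, the interval $[1/H_{v,\db,\Lc,\kappa_v,2},1/H_{v,\db,\Lc,\kappa_v,1}]$ is nonempty only when the two heights agree, so your archimedean intervals are not genuine intervals. Read literally, the normalization $|d_1|_v\ge|d_2|_v$ makes $\overline{U}_{\db,\Lc,\kappa}=\{H_1=H_2=1\}$ and the lemma degenerate. Both the paper's proof (which writes $H_{1,v}^{-1}\ge|u^v|_v\ge H_{2,v}^{-1}$) and the basis construction (with $\kappa_v/|d_1|_v<\norm{\yb^v}_v<\kappa_v/|d_2|_v$) tacitly use the non-degenerate version, in which the smaller of the two heights is $\le 1$ and the larger is $\ge 1$. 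Fix one orientation before running the argument.
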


\begin{proof}
    The inclusion of the right-hand side in the left-hand side is clear. For the other inclusion, take a point $\xbb$ in the right-hand side. As $H_1(\xbb)^{-1}\ge 1\ge H_2(\xbb)^{-1}$, there is an idèle $\ub$ of norm 1 such that $|u^v|_v = H_{1,v}(\xb^v)^{-1} = H_{2,v}(\xb^v)^{-1}$ for ultrametric places and $H_{1,v}(\xb^v)^{-1} \ge |u^v|_v \ge  H_{2,v}(\xb^v)^{-1}$ for archimedean ones. Now $\ub\xbb$ lies in the left-hand side.
\end{proof}

\begin{lemma}
    The family $\Uc_S$ is a topological basis of $A_S^\br$, and hence so is $\widetilde{\Uc}_S$.
\end{lemma}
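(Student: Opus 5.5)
We need to show that every open set of $A_S^\br$ contains, around each of its points, some $U_{S,\db,\Lc,\kappa}$ containing that point; the sets in $\Uc_S$ are open because the local height functions are continuous. Since the products of the sets $U_{\yb^v,\epsilon}$ from~\eqref{eq:basis-epsilon} (with $\yb^v\in A_v^\br$) have images forming a basis of $A_S^\br$, it suffices to take a point $\ybb=(\yb^v)_{v\in S}$ together with $\epsilon>0$. We then produce $\db,\Lc,\kappa$ such that $\ybb\in U_{S,\db,\Lc,\kappa}$ and $U_{S,\db,\Lc,\kappa}\subset\bigl(\prod_{v\in S}U_{\yb^v,\epsilon'}\bigr)K_S^1$, where $\epsilon'$ is a small multiple of $\epsilon$. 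Once this is shown for $\Uc_S$, the statement for $\widetilde{\Uc}_S$ is immediate, because $\widetilde{\Uc}_S\supset\Uc_S$ still consists of open sets.

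To construct the data, we apply Lemma~\ref{lem:geometry-of-numbers} to $\ybb$, with $\epsilon$ and a large parameter $\kappa'$ to be fixed later. This yields local constants $\kappa_v$, the $S$-integers $d_1,d_2$, and pairs $(u_{i,j},w_{i,j})$. We set $\kappa=\prod_{v\in S}\kappa_v$ and $\Lc=\{u_{i,j}x_i-w_{i,j}x_j : 0\le i,j\le n\}$.

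\emph{Membership of $\ybb$.} For archimedean $v$ we have $\kappa_v^{-1}|d_1|_v\norm{\yb^v}>1$ and $\kappa_v^{-1}|d_2|_v\norm{\yb^v}<1$. Every linear form in $\Lc$ satisfies $|l(\yb^v)|_v<\kappa_v$, so its term is dominated. The term $\kappa_v^{-1}\norm{\yb^v}$ is also small, since $|d_2|_v>1$. Hence $H_{v,\cdot,2}(\yb^v)<1<H_{v,\cdot,1}(\yb^v)$.

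At finite $v\in S$ both heights equal exactly $1$. Here we have to pass to the $K_S^1$-class: we rescale by an element of $K_S$ of norm $1$ so as to move an archimedean factor. This gives $H_{S,1}(\ybb)<1$ and $H_{S,2}(\ybb)>1$, i.e.\ $\ybb\in U_{S,\db,\Lc,\kappa}$. The products of the local inequalities are what matter, and the slack at the archimedean places absorbs the required rescaling.

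\emph{Containment.} Let $\xbb\in U_{S,\db,\Lc,\kappa}$. By Lemma~\ref{lem:lemma-product-of-height-annuli}, in its $S$-version, after multiplying by an element of $K_S^1$ we may assume the local conditions $H_{v,1}(\xb^v)\le1\le H_{v,2}(\xb^v)$ at every $v\in S$. At each such $v$ we then derive the following bounds.
\begin{enumerate}
\item $|d_1|_v\norm{\xb^v}\le\kappa_v$, which gives $\norm{\xb^v}<(1+\epsilon)\norm{\yb^v}$ in the archimedean case and equality of norms in the finite case.
\item $|l(\xb^v)|_v\le\kappa_v$ for all $l\in\Lc$.
\item From $H_{v,2}\ge1$: either $|d_2|_v\norm{\xb^v}\ge\kappa_v$, giving the matching lower bound $(1-\epsilon)\norm{\yb^v}<\norm{\xb^v}$, or some $|l(\xb^v)|_v\ge\kappa_v$.
\end{enumerate}

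The second alternative in (3) must be excluded, and this, together with the direction estimate, is the main obstacle. Write $\xb^v$ as a multiple $\lambda\yb^v+\zb$ of $\yb^v$ plus an error $\zb$. Then $l_{i,j}(\xb^v)=\lambda l_{i,j}(\yb^v)+u_{i,j}z_i-w_{i,j}z_j$. Since $\max(|u_{i,j}|_v,|w_{i,j}|_v)>\kappa'\kappa_v$, the bound $|l_{i,j}(\xb^v)|\le\kappa_v$ forces the corresponding cross term $|x_iy_j-x_jy_i|_v$ to be $O(\norm{\yb^v}/\kappa')$. This uses the relation between $u/w$ and $y_j/y_i$ coming from $|u y_i-w y_j|<\kappa_v$ together with $|w|$ large. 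It also uses a uniform upper bound on $|u|,|w|$, which can be arranged by choosing the points from the bounded-annulus sets $U_{Q,R}$ in the proof of Lemma~\ref{lem:geometry-of-numbers}; I would revisit that proof to extract this. Choosing $\kappa'$ large relative to $1/\epsilon$ gives $|x_iy_j-x_jy_i|_v<\epsilon$.

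Excluding the linear-form alternative likewise needs a norm upper bound: the linear forms exceed $\kappa_v$ only if $\norm{\xb^v}$ is comparable to $\norm{\yb^v}$ anyway. So the argument reduces to careful bookkeeping of constants, with the archimedean/finite case distinction handled as in the remark after~\eqref{eq:basis-epsilon}.
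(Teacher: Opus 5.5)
Your outline follows the paper: Lemma~\ref{lem:geometry-of-numbers} supplies $\db,\Lc,\kappa$, and Lemma~\ref{lem:lemma-product-of-height-annuli} reduces containment to local annuli. You also correctly isolate the step the paper's proof passes over. The paper asserts without argument that the local annulus lies in $U_{\yb^v,2\epsilon}$, and by what follows that assertion is false. The issue is that the lower bound coming from the larger height can be supplied by a linear form, $\max_{l\in\Lc}|l(\xb^v)|_v=\kappa_v$, instead of by the $d$-term.

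\textbf{The local containment fails.} Your way of excluding the linear-form alternative is false. Write $H_{v,i}=H_{v,\db,\Lc,\kappa_v,i}$, so that $H_{v,1}\ge H_{v,2}$ and the two differ only in the $d$-term.
\begin{itemize}
\item Rescale an arbitrary $\boldsymbol{z}\in C(K_v)$ so that $H_{v,1}(\boldsymbol{z})=1$ (at finite $v$ this uses $\kappa_v\in|K_v^\times|$). Then $H_{v,2}(\boldsymbol{z})\le 1$, so the local closed annulus $\{H_{v,2}\le1\le H_{v,1}\}$ meets every line through the origin.
\item If $[\boldsymbol{z}]$ is far from $[\yb^v]$, the maximum defining $H_{v,1}(\boldsymbol{z})$ must be attained by some $l\in\Lc$. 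Otherwise $\norm{\boldsymbol{z}}_v=\kappa_v/|d_1|_v$, and the direction estimate would put $[\boldsymbol{z}]$ near $[\yb^v]$.
\item The coefficients of these forms are huge, so in directions transverse to $\yb^v$ this already happens at norm roughly $\kappa_v/\max(|u_{i,j}|_v,|w_{i,j}|_v)<1/\kappa'$. Once $\epsilon$ is small, that is far below $\norm{\yb^v}_v$.
\end{itemize}
So the claim that the linear forms reach $\kappa_v$ only when $\norm{\xb^v}$ is comparable to $\norm{\yb^v}$ is wrong, and the local annulus is never contained in $U_{\yb^v,\epsilon'}$. Incidentally, the direction estimate needs no upper bound on $|u_{i,j}|_v,|w_{i,j}|_v$. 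For $l=u_{i,j}x_i-w_{i,j}x_j$ one has $w_{i,j}(x_iy_j-x_jy_i)=y_i\,l(\xb)-x_i\,l(\yb)$ and $u_{i,j}(x_iy_j-x_jy_i)=y_j\,l(\xb)-x_j\,l(\yb)$.

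\textbf{For $|S|\ge2$ the global set is too large.} This is not a matter of bookkeeping. Take $v\in S$ archimedean and $v'\in S\setminus\{v\}$.
\begin{itemize}
\item Since $H_{v,2}(\yb^v)<1<H_{v,1}(\yb^v)$, replacing $\yb^{v'}$ by an arbitrary $\boldsymbol{z}\in C(K_{v'})$ keeps $H_{S,2}<H_{S,1}$.
\item Multiplying the $v$-coordinate by a suitable real $t>0$ then gives a point of $U_{S,\db,\Lc,\kappa}$.
\item $K_S^1$ acts by scalars at each place, so it cannot move $[\boldsymbol{z}]\in\Pd^n(K_{v'})$. Hence $U_{S,\db,\Lc,\kappa}\not\subset\bigl(\prod_{w\in S}U_{\yb^w,\epsilon'}\bigr)K_S^1$.
\end{itemize}
The same argument applies to every member of $\Uc_S$ containing $\ybb$. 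Indeed, $H_{S,2}(\ybb)<H_{S,1}(\ybb)$ forces a strict local inequality at some place of $S$, and a real rescaling at an archimedean place absorbs the change. So for $|S|\ge2$ (e.g.\ $S=\infty_K$ with $K$ neither $\mathds{Q}$ nor imaginary quadratic), $\Uc_S$ is not a basis at all.

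In that case one has to prove the basis property for $\widetilde{\Uc}_S$ directly. One possible route is to intersect a set controlling the total norm with, for each $v_0\in S$, a set that pins down the direction at $v_0$ and is degenerate at the other places of $S$. Degeneracy could be arranged by adding forms $Nx_0,\dots,Nx_n$ with $|N|_{v_0}<|d_1|_{v_0}$ and $|N|_w\ge|d_1|_w$ for $w\in S\setminus\{v_0\}$.

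For $|S|=1$ your argument does close, but through the strict inequalities rather than Lemma~\ref{lem:lemma-product-of-height-annuli}. The condition $H_{S,2}(\xb)<1$ gives $|l(\xb)|_v<\kappa_v$ for all $l\in\Lc$, so $H_{S,1}(\xb)>1$ forces $|d_1|_v\norm{\xb}_v>\kappa_v$.

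\textbf{Your membership step is also invalid.} The $H_{S,i}$ are $K_S^1$-invariant and $H_{S,1}\ge H_{S,2}$, so no rescaling can yield $H_{S,1}(\ybb)<1<H_{S,2}(\ybb)$. As written, $\{H_{S,1}<1\}\setminus\{H_{S,2}\le1\}$ is empty: the indices $1,2$ are swapped in the definition and in Lemma~\ref{lem:lemma-product-of-height-annuli}. With the set $\{H_{S,2}<1<H_{S,1}\}$, your local computation gives $\ybb\in U_{S,\db,\Lc,\kappa}$ directly.
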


\begin{proof}
For each $v\in S$, let $\yb^v = (y^v_0,\dots,y^v_n)\in A_v^\br$, and let $\epsilon>0$. 
Appealing to Lemma~\ref{lem:geometry-of-numbers}, 
we may find $S$-integers $u_{i,j}$ and $w_{i,j}$ for each $1\le i,j\le n$ and a positive real $\kappa_v$ for each $v\in S$ such that
\begin{align*}
    |u_{i,j}y_i - w_{i,j}y_j|_v &< \kappa_v \quad \text{and}\\
    \kappa_v\norm{(y_0,y_i)}_v/\epsilon &< \norm{(u_{i,j},w_{i,j})}_v
\end{align*}
for all $1\le i,j\le n$ and $v\in S$, and
we may find $S$-integers $d_1$ and $d_2$ with
\begin{enumerate}
    \item $(1-\epsilon) \norm{\yb^v}_v < \kappa_v/|d_1|_v<\norm{\yb^v}_v < \kappa_v/|d_2|_v< (1+\epsilon)\norm{\yb^v}_v$ for all archime\-dean $v\in S$,
    \item $\kappa_v/|d_1|_v = \norm{\yb^v}_v$ and $\norm{d_1}_v=\norm{d_2}_v$ for all ultrametric $v\in S$.
\end{enumerate}
Set $\kappa_v=1$ for $v\not\in S$ and $\kappa=\prod_{v\in S}\kappa_v$, and $\Lc = \{u_{i,j}x_i - w_{i,j}x_j\}$.
For each $v$, the set $\{H_{v,\db,\Lc,\kappa,1}\le 1\} \setminus \{H_{v,\db,\Lc,\kappa,2}< 1\}$ contains $\yb$ in its interior and is contained in $U_{\yb^v,2\epsilon}$ as in~\eqref{eq:basis-epsilon}. Appealing to Lemma~\ref{lem:lemma-product-of-height-annuli}, the interior $U_{S,\db,\Lc,\kappa}$ of their product in $A_S$ is thus contained in $\prod U_{\yb^v,2\epsilon}K_S^1$, and $\Uc_S$ forms a basis.
\end{proof}

\subsection{Proof of the equidistribution theorem}

We are now ready to prove Theorem~\ref{thm:equidistribution-theorem}; recall that it suffices to do so for $X=\Pd^n$. To do so, it is convenient to define $N'(\Pc',B) = |\{x\in V : H_{\Pc'}(x)< B\}|$; note that it also satisfies the asymptotic expansion~\eqref{eq:asymptotic-hypothesis}.
Let $i\colon \Pd^n(K)\to A$ map a rational point $x$ to the unique preimage of itself regarded as an adelic point with $\norm{x} = H(x)$, and let $i_S = \pi_S\circ i\colon \Pd^n(K)\to A_S$.
Let $\mu_B$ be the probability measure $N'(\Pc,B)^{-1}\sum_{x\in V, H(x) < B} \delta_{i_S(x)/B}$. Note that it is supported on $A_S^\br$. Moreover, let $\tau$ be the measure on $\Pd^n(\Ab)$ associated with the height function $H = H_{\Pc} = \norm{\cdot}$.
\begin{lemma}
    Let $U_S\in \Uc_S$ and $U = \pi^{-1}(U_S)\in \Uc$. Then $\mu_B(U_S)\to \tau^H(U)/\tau^H(A^\br)$ as $B\to \infty$.
\end{lemma}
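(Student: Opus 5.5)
The plan is to rewrite $\mu_B(U_S)$ as a difference of two counting functions of the type appearing in~\eqref{eq:asymptotic-hypothesis}. We then apply the hypothesis to each of them, and identify the resulting difference of leading constants with $\tau^H(U)$ via~\eqref{eq:difference-of-measures}.

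\emph{Step 1: translate membership in $U_S$ into height conditions.} Write $U_S=U_{S,\db,\Lc,\kappa}$, where $\Lc$ consists of linear forms with $\of_S$-coefficients; in our application they have the shape $u x_i - w x_j$. Take $x\in V$ and the representative $\tilde\xb$ of $i(x)$ whose components at $v\notin S$ have norm $1$. By the ultrametric remark preceding the definition of $\Uc_S$, each local factor $H_{v,\db,\Lc,\kappa_v,k}$ with $v\notin S$ equals $1$ at $\tilde\xb$. Homogeneity of degree $1$ and the product formula then give
\begin{equation*}
H_{S,\db,\Lc,\kappa,k}(i_S(x)/B)=\frac{H_{\Pc_{\db,\Lc,k}}(x)}{\kappa B}.
\end{equation*}
Hence $i_S(x)/B\in U_S$ is the conjunction of a strict inequality for $H_{\Pc_{\db,\Lc,1}}(x)$ against $\kappa B$ and the negation of a non-strict one for $H_{\Pc_{\db,\Lc,2}}(x)$. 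The inclusion $U_S\subset A_S^\br$ makes the side condition $H(x)<B$ in $\mu_B$ automatic. Comparing the two sets place by place (using $|d_1|_v\ge |d_2|_v$), one of the two height conditions implies the other. So $N'(\Pc,B)\,\mu_B(U_S)$ equals, up to the orientation fixed by the definitions, the difference $N'(\Pc_{\db,\Lc,1},\kappa B)-N(\Pc_{\db,\Lc,2},\kappa B)$ of two nested counts.

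\emph{Step 2: apply the hypothesis.} Each $\Pc_{\db,\Lc,k}$ contains $\Pc=\{x_0,\dots,x_n\}$. Its extra members are $d_kx_j=d_k x_j+0\cdot x_0$ and $u x_i-w x_j$, both of the form $uh+u'h'$ with $u,u'\in\of_S$ and $h,h'\in\Pc$. Thus conditions~\ref{enum:P'-cond-1} and~\ref{enum:P'-cond-2} hold, and~\eqref{eq:asymptotic-hypothesis} applies to both $N$ and $N'$. Since $\log(\kappa B)\sim\log B$, dividing by $N'(\Pc,B)\sim c\,\tau_\Pc(\Pd^n(\Ab_K))B^a(\log B)^{b-1}$ gives
\begin{equation*}
\mu_B(U_S)\to \frac{\kappa^a\big(\tau_{\Pc_{\db,\Lc,1}}(\Pd^n(\Ab_K))-\tau_{\Pc_{\db,\Lc,2}}(\Pd^n(\Ab_K))\big)}{\tau_\Pc(\Pd^n(\Ab_K))},
\end{equation*}
with the sign matching Step~1.

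\emph{Step 3: identify the limit.} By~\eqref{eq:tamagawa-relative-heights}, $\kappa^a\tau_{\Pc_{\db,\Lc,k}}$ is the measure $\tau_{H_k}$ attached to $H_k=\kappa^{-1}H_{\Pc_{\db,\Lc,k}}=H_{\db,\Lc,\kappa,k}$. Equation~\eqref{eq:difference-of-measures} therefore writes the numerator as the total mass of $(\ind_{\{H_1\le1\}}-\ind_{\{H_2\le1\}})\,\tau_A$. This is $\tau_A$ of the annulus $U$, up to boundary sets $\{H_k=1\}$, which are $\tau_A$-null because $\tau_A$ has a density in the radial variable $\norm{\yb}$. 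On $U\subset A^\br$ the indicator $\ind_{\{H\le 1\}}$ equals $1$, so $\tau_A(U)=\tau^H(U)$. Finally, Lemma~\ref{lem:pushforward-measure-cone} gives $\tau^H(A^\br)=\tau(\Pd^n(\Ab_K))=\tau_\Pc(\Pd^n(\Ab_K))$, which is the claimed limit.

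The main obstacle is Step~1: checking carefully that the $S$-local heights computed at $i_S(x)/B$ reproduce exactly the global heights $H_{\Pc_{\db,\Lc,k}}(x)/(\kappa B)$, and keeping the strict and non-strict inequalities and the nesting direction consistent with the definition of $U_S$. Once the count is written as a difference of two nested counting functions, Steps~2 and~3 are formal.
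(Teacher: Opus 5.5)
Your proposal is correct and is essentially the paper's proof. Both arguments write $N'(\Pc,B)\,\mu_B(U_S)$ as a difference of two nested counts for $\Pc_{\db,\Lc,1}$ and $\Pc_{\db,\Lc,2}$ at height $\kappa B$, apply~\eqref{eq:asymptotic-hypothesis}, and identify the limit via~\eqref{eq:difference-of-measures}. The differences are minor: you absorb $\kappa$ into $H_k=\kappa^{-1}H_{\Pc_{\db,\Lc,k}}$ using~\eqref{eq:tamagawa-relative-heights} (yielding the factor $\kappa^{a}$) rather than rescaling the radial coordinate of $A$. You also make explicit two things the paper leaves implicit: why conditions (1)--(2) of Theorem~\ref{thm:equidistribution-theorem} hold, which needs $\Lc$ to consist of binary forms as in the basis construction, and why the sets $\{H_k=1\}$ are $\tau_A$-null.

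Your hedge about orientation is warranted. Since $|d_1|_v\ge|d_2|_v$ forces $H_{\db,\Lc,\kappa,1}\ge H_{\db,\Lc,\kappa,2}$, the set $\{H_1<1\}\setminus\{H_2\le 1\}$ as literally defined is empty, and the intended annulus is $\{H_2<1\}\setminus\{H_1\le 1\}$. So the roles of $\Pc_{\db,\Lc,1}$ and $\Pc_{\db,\Lc,2}$ in the count (and in the paper's own write-up) should be swapped; nothing else in the argument changes.
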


\begin{proof}
    Let $U_S = U_{S,\db,\Lc,\kappa}$. Then $i_S(x)/B\in U$ if and only if $i(x)/B\in U_{\db,\Lc, \kappa}$, that is, if and only if
    $H_{\db,\Lc,\kappa,1}(i(x)/B)<1<H_{\db,\Lc,\kappa,2}(i(x)/B)$. As  $H_{\db,\Lc,\kappa,1}(i(x))<H_{\db,\Lc,\kappa,2}(i(x))$ holds automatically and $H_{\db,\Lc,\kappa,1}(i(x)/B)<1$ implies $H(i(x)/B)<1$, that is, $H(x)<B$, we get
    \begin{align*}
        \mu_B(U) &= \frac{N'(\Pc_{\db,\Lc,1},\kappa B) - N(\Pc_{\db,\Lc,2},\kappa B)}{N'(\Pc,B)} \\
        & =
        \left( \int_{X(\Ab_K) }\df\tau_H\right)^{-1}
        \int_{X(\Ab_K)} \kappa^{-a} (\df\tau_{H_1}-\df\tau_{H_2})(1+o(1)),
    \end{align*}
    where $H_i$ is the height function associated with $\Pc_{\db,\Lc, i}$ for $i\in \{1,2
    \}$.
    Appealing to~\eqref{eq:cone-measure-adeles} and~\eqref{eq:difference-of-measures}, the right-hand side becomes
    \begin{equation*}
       \tau^H(A^\br)^{-1}
        \int_{A} \kappa^{-a}\pi_*(\ind_{H_1\le 1} - \ind_{H_2\le 1})\df\tau_A,
    \end{equation*}
    and this last integral coincides with $\tau^H(U)$ upon the linear change of variables multiplying the second factor of $A\cong \Pd^n(\Ab_K)\times \Rd_{>0}$ by $\kappa$ changing the measure $\tau^H$ by a factor $\kappa^a$ as per its definition~\eqref{eq:cone-measure-adeles} (note that $H$ is homogeneous of degree $1$ as an $\Oc(1)$-height).
\end{proof}

\begin{lemma}
    \begin{enumerate}
        \item Every $U\in \Uc$ is a $\tau^H$-continuity set.
        \item Let $U\in \widetilde{\Uc}$ and $\epsilon>0$. Then there are $U_2,U_1\in \Uc$ with $U_2\subset U \subset U_1$, $\tau^H(U\setminus U_2)<\epsilon$, and $\tau^H(U_1\setminus U)<\epsilon$.
        \item  Let $U_S\in \widetilde{\Uc}_S$. Then $\mu_B(U_S)\to \tau^H(U_S)$ as $B\to \infty$.
    \end{enumerate}
\end{lemma}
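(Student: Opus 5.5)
For (1), I will write $U=U_{\db,\Lc,\kappa}$ and observe that its topological boundary lies in $\overline{U}_{\db,\Lc,\kappa}\setminus U_{\db,\Lc,\kappa}$. This set is contained in the union of the two level sets $\{H_{\db,\Lc,\kappa,1}=1\}$ and $\{H_{\db,\Lc,\kappa,2}=1\}$ in $A\cong\Pd^n(\Ab_K)\times\Rd_{>0}$. Each $H_{\db,\Lc,\kappa,i}$ is homogeneous of degree one in the radial coordinate $\norm{\cdot}$. So for each fixed $\xb\in\Pd^n(\Ab_K)$, the slice of such a level set is a single value of $\norm{\yb}$. The measure $\tau^H$ is, by \eqref{eq:cone-measure-adeles}, absolutely continuous in the radial variable with density $a\,t^{a-1}$, and $\mu$ is finite. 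Fubini therefore gives each level set $\tau^H$-measure zero. Here I will need $\overline U$ to really be the closure, or at least to contain it; this follows from Lemma~\ref{lem:lemma-product-of-height-annuli} together with continuity of the local height factors.

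For (2), a set $U\in\widetilde{\Uc}$ is a finite intersection $U^{(1)}\cap\dots\cap U^{(m)}$ of members of $\Uc$. My plan is to approximate it from inside and outside by single members of $\Uc$, using that $\Uc$ is a basis: the preimage of the basis $\Uc_S$ under the open map $\pi_S$, combined with the decomposition \eqref{eq:decomp-A-AS}.
\begin{itemize}
\item For the inner approximation $U_2$, I will use inner regularity of the finite Radon measure $\tau^H$ to pick a compact $K\subset U$ with $\tau^H(U\setminus K)<\epsilon$. I cover $K$ by finitely many basis sets contained in $U$.
\item For the outer approximation $U_1$, I use outer regularity to choose an open $O\supset\overline U$ with $\tau^H(O\setminus\overline U)<\epsilon$. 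Since $U$ is a continuity set by (1), this also controls $\tau^H(O\setminus U)$. I then cover the compact closure (we work inside $A^\br$, where $\overline U$ is compact in the $S$-coordinates) by basis sets contained in $O$.
\end{itemize}
The catch is that a finite union of basis sets need not itself lie in $\Uc$. So the statement should be read with finite unions, or the subsequent argument should use finite unions and inclusion--exclusion. Either way, I will check that unions and intersections of continuity sets are again continuity sets. The main obstacle I expect is making this step clean, since $\Uc$ is not closed under unions.

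For (3), I will first handle finite intersections of members of $\Uc_S$. Writing $U_S=U^{(1)}\cap U^{(2)}$, I use $\ind_{U^{(1)}\cap U^{(2)}}=\ind_{U^{(1)}}+\ind_{U^{(2)}}-\ind_{U^{(1)}\cup U^{(2)}}$. This reduces the claim to showing that convergence $\mu_B(V)\to\tau^H(\pi^{-1}V)/\tau^H(A^\br)$ holds on a family of continuity sets closed under these operations. The preceding lemma gives the convergence for single members of $\Uc_S$. The approximation in (2) then yields
\[
\limsup_B\mu_B(U_S)\le\tau^H(U_1)+O(\epsilon)\quad\text{and}\quad\liminf_B\mu_B(U_S)\ge\tau^H(U_2)-O(\epsilon),
\]
both normalized by $\tau^H(A^\br)$. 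Letting $\epsilon\to0$ gives the limit. This is the standard portmanteau-type argument, where the squeeze uses monotonicity of $\mu_B$.
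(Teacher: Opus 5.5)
Your part (1) is fine and uses the same homogeneity idea as the paper. You apply Fubini fibrewise over $\Pd^n(\Ab_K)$, noting that each ray meets $\{H_{\db,\Lc,\kappa,i}=1\}$ in a single radius. The paper instead slices by the radius $h$ and uses that the slices are disjoint while $\tau$ is finite.

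The genuine gap is in (2), and it breaks (3). The statement requires $U_1,U_2$ to be \emph{single} members of $\Uc$, and this is essential. The only sets on which you know that $\mu_B$ converges are single $U_{S,\db,\Lc,\kappa}$, because only for those is $\mu_B$ a normalized difference $N'(\Pc_{\db,\Lc,1},\kappa B)-N(\Pc_{\db,\Lc,2},\kappa B)$ to which \eqref{eq:asymptotic-hypothesis} applies. Inner and outer regularity of $\tau^H$ only give you finite unions $V_1\cup\dots\cup V_m$ of such sets, and on these you have no limit. The union bound overcounts overlaps, and monotonicity only gives $\mu_B(V_1\cup\dots\cup V_m)\ge\max_j\mu_B(V_j)$.

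Your fallback via $\ind_{U^{(1)}\cap U^{(2)}}=\ind_{U^{(1)}}+\ind_{U^{(2)}}-\ind_{U^{(1)}\cup U^{(2)}}$ is circular. Given convergence on $U^{(1)}$ and $U^{(2)}$, convergence on $U^{(1)}\cup U^{(2)}$ is \emph{equivalent} to convergence on $U^{(1)}\cap U^{(2)}$. That is exactly what (3) has to establish, and what the $\pi$-system hypothesis of the weak-convergence criterion used afterwards needs. Since the only information you have about $\mu_B$ is its limit on single members of $\Uc_S$, no measure-theoretic approximation argument can close this. A minor point: $\Uc$ is not a basis of $A^\br$; only $\Uc_S$ is a basis of $A^\br_S$, so any covering argument has to be carried out in $A_S$.

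The missing idea is to use the algebraic shape of the sets to sandwich $U_{\db,\Lc,\kappa}\cap U_{\db',\Lc',\kappa'}$ between single sets of the same type. This amounts to producing new height systems $\Pc'$ to which the hypothesis applies. The paper does it in two steps. First it rescales both parameter sets by $S$-integers $e,e'$, which leaves the sets unchanged, so that $\kappa/\kappa'\in(1-\epsilon,1+\epsilon)$ and all $|d_i|_v,|d_i'|_v$ are large. It then merges the data: the linear forms become $\Lc\cup\Lc'$, the constant becomes $\max\{\kappa,\kappa'\}$ resp.\ $\min\{\kappa,\kappa'\}$, and the $d_i^{(j)}$ are new $S$-integers whose absolute values lie within a factor $1\pm\epsilon$ of the maximum resp.\ minimum of the old ones. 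This yields sets $U_{2,\epsilon}\subset U\cap U'\subset U_{1,\epsilon}$ in $\Uc$ that exhaust $U\cap U'$ from inside and shrink to its closure from outside. Part (1) turns this into the required $\epsilon$-approximation, and your squeeze then gives (3). If you carry out this merging for $|S|\ge 2$, check the sandwich carefully: a product over $v\in S$ of local maxima dominates the maximum of the products but need not equal it.
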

\begin{proof}
   For the first statement, let $U = U_{\db, \Lc, \kappa}$. Its boundary is contained in (the image in $A_S$ of) the union over hypersurfaces defined by $|l(\yb^v)|_v=\kappa_v$, where $l$ is one of the forms in $\Lc$ or $d_iy_j$ for $i\in\{1,2\}$ and $0\le j\le n$. In particular, if the boundary were not a null set, one of these hypersurfaces would need to have positive measure. Writing $\ybb(\xbb, h)$ for the unique representative $\ybb\in A$ of $\xbb$ with $\norm{\ybb}=h$, this volume is
   \begin{equation*}
     \int_{h\in (0,1)} a h^{a-1}\int_{\xbb\in X(\Ab_K),\ l(\ybb(\xbb,h))=\kappa} \df \tau \df h,
   \end{equation*}
   so in particular, the inner integral needs to be positive for uncountably many $h$. But the sets $\{l(\ybb(\xbb, h)) = \kappa\}$ are disjoint for different $h$ as $l(\ybb(\xbb, h)) = hl(\ybb(\xbb,1))$ and $\kappa\ne 0$. Hence
   \begin{equation*}
    \tau(\Pd^n(\Ab_K)) \ge \sum_{h\in (0,1)}\int_{\xbb\in \Pd^n(\Ab_K),\ l(\ybb(\xbb,h))=\kappa} \df \tau =\infty,
   \end{equation*}
   a contradiction to the assumptions on $\tau$.

   It suffices to prove the second statement for binary intersections $U\cap U'$ of sets in $\Uc$. The general case follows from repeated applications. To this end, let $U = U_{\db,\Lc,\kappa}$ and $U' = U_{\db',\Lc',\kappa'}$, and let $\epsilon > 0$.
   After multiplying the sets of parameters with suitable $e$ and $e'$, we may assume that $1-\epsilon < \kappa/\kappa'<1+\epsilon$; after changing the decompositions of $\kappa$ and $\kappa'$ as products of $\kappa_v$ and $\kappa_v'$, we may assume the same for these factors, all without changing $U$ or $U'$. Further increasing $e$ and $e'$, we may assume that the absolute values $|d_i|_v$ and $|d_i'|_v$ are arbitrarily large. We may thus find $d_i^{(j)}\in \of_S$ for $i,j\in \{1,2\}$ such that 
   \begin{align*}
    (1-\epsilon)\max\{|d_1|_v,|d_1'|_v\} &< |d^{(2)}_1|_v < \max\{|d_1|_v,|d_1'|_v\}, \\
    \max\{|d_1|_v,|d_1'|_v\} &< |d^{(1)}_1|_v < (1+\epsilon)\max\{|d_1|_v,|d_1'|_v\},
   \end{align*}
   and analogously for $d_2^{(i)}$ in reversed order (so that $d_2^{(1)}< d_2^{(2)}$) and with the maxima replaced by minima. Now define
   \begin{equation*}
    U_{1,\epsilon} = U_{(d_1^{(1)},d_2^{(1)}),\Lc\cup \Lc',\max\{\kappa,\kappa'\}} \quad \text{and}\quad
   U_{2,\epsilon} = U_{(d_1^{(2)},d_2^{(2)}),\Lc\cup \Lc',\min\{\kappa,\kappa'\}}.
   \end{equation*}
   As $\epsilon\to 0$ these are contained in each other and satisfy $U_{2,\epsilon}\subset U\cap U'\subset U_{1,\epsilon}$,
    $\bigcup_{\epsilon>0} U_{2,\epsilon} = U\cap U'$, and $\bigcap_{\epsilon>0} U_{2,\epsilon} = \overline{U\cap U'}$. As the boundary of the last set is contained in the union of those of $U$ and $U'$, hence is a null set by the first statement, the second statement follows.
    The final statement now follows from the second.
\end{proof}

\begin{prop}\label{prop:equidistribution}
    The probability measures $\mu_B$ converge weakly to $(\pi_S)_*\tau^H / \tau^H(A)$.
\end{prop}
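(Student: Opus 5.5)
We deduce weak convergence from convergence on a basis that is closed under finite intersections, using the standard criterion for weak convergence of probability measures on a separable metrizable space (Billingsley, Thm.~2.2 and Cor.~2.3). All measures live on $A_S^\br \cong \Pd^n(K_S)\times(0,1)$, which is separable and metrizable. The family $\widetilde{\Uc}_S$ is closed under finite intersections. By the lemma above it is a topological basis of $A_S^\br$, and in fact a countable subfamily already is one: restrict $\kappa$ and the $\kappa_v$ to rationals and the $\epsilon$ in the basis argument to $1/m$. By part (3) of the preceding lemma we have $\mu_B(U_S)\to \tau^H(\pi_S^{-1}U_S)/\tau^H(A^\br)$ for every $U_S\in\widetilde{\Uc}_S$.

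First, we identify the limit with $(\pi_S)_*\tau^H/\tau^H(A)$. The measure $\tau^H$ as defined in~\eqref{eq:cone-measure-adeles} is supported on $\{H\le 1\}=\overline{A^\br}$. The boundary $\{\norm{\cdot}=1\}$ is a null set because $\df\norm{\xb}$ is Lebesgue, so $\tau^H(A)=\tau^H(A^\br)$ and the normalizations agree. The limit is a probability measure; its total mass is attained because $\mu_B$ is a probability measure and $A_S^\br$ is exhausted by finite unions of basis sets. The preceding lemma only gives mass control on single basis sets, so this step needs care.

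Second, we pass to open sets. Every open $G\subset A_S^\br$ is a countable union of sets in the countable basis. For each $\delta>0$ there are finitely many basis sets $V_1,\dots,V_k\subset G$ with $\nu(G)-\delta<\nu(V_1\cup\dots\cup V_k)$, where $\nu$ denotes the limit measure. Inclusion--exclusion expresses $\mu_B(V_1\cup\dots\cup V_k)$ through intersections, which again lie in $\widetilde{\Uc}_S$. Hence
\begin{equation*}
\liminf_B \mu_B(G)\ge \lim_B\mu_B\Bigl(\bigcup_{j} V_j\Bigr)=\nu\Bigl(\bigcup_{j} V_j\Bigr)>\nu(G)-\delta .
\end{equation*}
Letting $\delta\to0$ gives $\liminf\mu_B(G)\ge\nu(G)$ for all open $G$, and the portmanteau theorem then yields weak convergence. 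Tightness is not an issue, since both $\mu_B$ and $\nu$ have total mass $1$.

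The main obstacle is the total-mass bookkeeping: the portmanteau theorem requires $\nu$ to be a probability measure on the same space as the $\mu_B$. The points of $\mu_B$ can approach the norm-$0$ end of $A_S^\br$, which is not compact. We handle this by observing that $\mu_B(A_S^\br)=1$ and that $\nu(A_S^\br)=1$ by construction. With both masses equal to $1$, the open-set inequality alone suffices, and no separate escape-of-mass argument is needed.
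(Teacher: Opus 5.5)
Your proposal is correct and follows the paper's route. The paper likewise uses that $\widetilde{\Uc}_S$ is a $\pi$-system forming a basis of the separable space $A_S^\br$ on whose members $\mu_B$ converges, and concludes by Billingsley's criterion (Thm.~2.3); you unpack that criterion via inclusion--exclusion and the open-set form of the portmanteau theorem, after checking $\tau^H(A)=\tau^H(A^\br)$.

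One remark should be dropped: that the limit has mass $1$ ``because $\mu_B$ is a probability measure and $A_S^\br$ is exhausted by finite unions of basis sets''. That argument could at best give mass $\le 1$, since the basis sets stay away from the norm-$0$ end. It is also unnecessary, because $\nu(A_S^\br)=\tau^H(A^\br)/\tau^H(A)=1$ holds by definition of the normalized pushforward, as you note at the end.
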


\begin{proof}
The family $\widetilde{\Uc}$ is a $\pi$-system and a basis, and $A^\br$ is separable (rational vectors form a countable dense set, for instance). Weak convergence now follows from~\cite[Thm.~2.3]{MR1700749}.
\end{proof}

\begin{proof}[Proof of Theorem~\ref{thm:equidistribution-theorem}]
    Let $\pi\colon A_S \to \prod_{v\in S} \Pd^n(K_v)$. Then the abstract equidistribution theorem amounts to saying that $\pi_*\mu_B$ converges weakly to $\tau = \pi_* \tau^H$, which follows from Proposition~\ref{prop:equidistribution}.
\end{proof}

\subsection{Corollaries}

With the main theorem at hand, we turn to several corollaries, for the proof of which we drop the assumption that $X=\Pd^n$.

\begin{proof}[Proof of Corollary~\ref{cor:adelic-equidistribution}]
    The arguments to this point prove this corollary by simply skipping the projection $A\to A_S$. Alternatively, Theorem~\ref{thm:equidistribution-theorem} proves convergence on continuity sets of the form $\prod_v W_v$ such that almost all $W_v$ coincide with $X(K_v)$. As this family is closed under finite intersections and generates the Borel $\sigma$-algebra, weak convergence follows.
\end{proof}

\begin{cor}\label{cor:equidistribution-by-counting-mod-q}
    Keep all notation from Theorem~\ref{thm:equidistribution-theorem}; let $\Xf$ be a proper model of $X$, and let $\Lf$ be an $\of_S$-model of $dL$ that is globally generated by $\Pc\subset \Hr^0(\Xf,\Lf)\subset\Hr^0(X,dL)$. For a point $r\in \Xf(\of_S/\qf)$ modulo a nonzero ideal $\qf\subset \of_S$, let 
    \begin{equation*}
        N(\Pc',r, B) = |\{x\in V: H_{\Pc'}(x)\le B,\ \congr{x}{r}{\qf}\}|
    \end{equation*}
    and
    \begin{equation*}
        c_r = \lim_{l\to \infty} 
        \frac{
            |\{r'\in \Xf(\of_S/\qf^l):\congr{r'}{r}{\qf}\}|
        }{
            |\Xf(\of_S/\qf^l)|
        }.
    \end{equation*}
    Assume that
    \begin{equation}\label{eq:tamagawa-like-assumption}
        \tau_{\Pc'}(\{x\in\Xf(\Ab_{\of_K}) : \congr{x}{r}{\qf} \}) = c_r\tau_{\Pc'}(\Xf(\Ab_K))
    \end{equation}
    and
    \begin{equation*}
        N(\Pc', r, B) =
        c c_r \tau_{\Pc'}(X(\Ab_K))
        B^a (\log B)^{b-1} (1+o(1))
    \end{equation*}
    for all ideals $\qf$ and points $r$ as above and all $\Pc'$ as in Theorem~\ref{thm:equidistribution-theorem}.
    
    Then $f(V)$ is equidistributed in $f(X)(\Ab_K)$ with limit measure $f_*\tau_\Pc$ when ordered by $H_{\Pc}$.
\end{cor}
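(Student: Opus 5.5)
We reduce to testing convergence on a $\pi$-system of product sets, splitting each test set into an archimedean part handled by Theorem~\ref{thm:equidistribution-theorem} and a finite part given by congruence conditions. Let $\Ms$ be the family of sets $W = W_S\times\prod_{v\notin S} W_v$, where $W_S\subset\prod_{v\in S}f(X)(K_v)$ is a $f_*\tau_\Pc$-continuity set. For the finite places, we enlarge $S$ if necessary so that its finite part is absorbed into the congruence conditions. We then require that only finitely many $W_v$ differ from $\Xf(\of_v)$, and that for those finitely many $v\notin S$ the set $W_v$ is a residue disc, i.e.\ $W_v=\{x : x\equiv r \bmod \pf_v^{l}\}$. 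By the Chinese remainder theorem, any finite product of such discs is a set $\{x:\congr{x}{r}{\qf}\}$ for a single ideal $\qf\subset\of_S$ and a single $r\in\Xf(\of_S/\qf)$. Residue discs form a basis of the topology of $\Xf(\of_v)=X(K_v)$ by properness, so $\Ms$ is closed under finite intersections and generates the Borel $\sigma$-algebra. Since $X(\Ab_K)$ is separable, the same criterion \cite[Thm.~2.3]{MR1700749} used in Proposition~\ref{prop:equidistribution} (possibly in its variant for a $\pi$-system that is a countable-intersection basis) shows that it suffices to prove
\[
\frac{|\{x\in V\cap W: H_\Pc(x)\le B\}|}{N(\Pc,B)}\to \frac{\tau_\Pc(W)}{\tau_\Pc(X(\Ab_K))}
\]
for each $W\in\Ms$.

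Fix $\qf$ and $r$. Let $V_r=\{x\in V:\congr{x}{r}{\qf}\}$, and consider the finite measure $\tau^{(r)}_{\Pc'}=c_r^{-1}\ind_{\{\congr{x}{r}{\qf}\}}\tau_{\Pc'}$. The relation \eqref{eq:tamagawa-relative-heights} still holds for the family $\tau^{(r)}_{\Pc'}$, because multiplying by an indicator commutes with multiplying by $(H_\Pc/H_{\Pc'})^a$. By \eqref{eq:tamagawa-like-assumption}, we have $\tau^{(r)}_{\Pc'}(X(\Ab_K))=\tau_{\Pc'}(X(\Ab_K))$. The hypothesis on $N(\Pc',r,B)$ therefore reads
\[
N(\Pc',r,B)=(c\,c_r)\,\tau^{(r)}_{\Pc'}(X(\Ab_K))B^a(\log B)^{b-1}(1+o(1)).
\]
This is exactly \eqref{eq:asymptotic-hypothesis} for the set $V_r$, the constant $c\,c_r$ and the measures $\tau^{(r)}_{\Pc'}$. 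If $c_r=0$, the point count is $o$ of the total, and the limit measure of the disc vanishes by \eqref{eq:tamagawa-like-assumption}, so both sides are $0$.

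Assume $c_r>0$. Theorem~\ref{thm:equidistribution-theorem} applied to $V_r$ shows that $f(V_r)$ is equidistributed in $\prod_{v\in S}f(X)(K_v)$ with limit measure proportional to the $S$-projection of $\ind_{\{\congr{x}{r}{\qf}\}}f_*\tau_\Pc$. A continuity set $W_S$ there pulls back to a continuity set of the restricted measure. The reason is that the boundary of $W_S\times\{\congr{x}{r}{\qf}\}$ is contained in $\partial W_S\times\{\congr{x}{r}{\qf}\}$, since residue discs are open and closed. Combining this equidistribution with the asymptotic for $N(\Pc,r,B)$ and the case $\qf=(1)$ for the total $N(\Pc,B)$ gives the desired limit
\[
\frac{c_r\,\tau_\Pc(W_S\times\{\congr{x}{r}{\qf}\})/\tau_\Pc(\{\congr{x}{r}{\qf}\})\cdot \tau_{\Pc}(X(\Ab_K))}{\tau_\Pc(X(\Ab_K))}=\frac{\tau_\Pc(W)}{\tau_\Pc(X(\Ab_K))},
\]
where the last equality is another application of \eqref{eq:tamagawa-like-assumption}.

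The main technical point is handling places of $S$ and bad reduction: points of $V$ are integral on the proper model $\Xf$ only after allowing denominators in $S$, and the finite places in $S$ are treated by Theorem~\ref{thm:equidistribution-theorem} rather than by congruences. The $H_\Pc$ ordering should be compatible, since $\Pc$ globally generates $\Lf$, so the local heights are trivial outside $S$ (Remark above). I also expect some care to be needed to check that $\Ms$ really satisfies the hypotheses of the weak-convergence criterion, namely that every open set is a countable union of members of $\Ms$.
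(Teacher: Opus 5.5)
Your proposal is correct and takes the same route as the paper, which writes $X(\Ab_K)=X(K_S)\times X(\Ab_K^S)$, fixes a residue disc $W$ off $S$, and applies Theorem~\ref{thm:equidistribution-theorem} to $V\cap W$; you fill in what the paper leaves implicit, namely the rescaled measures $\tau^{(r)}_{\Pc'}$, the case $c_r=0$, and the $\pi$-system criterion. Drop the remark about enlarging $S$: the hypotheses are only available for $\of_S$-linear combinations and ideals $\qf\subset\of_S$, and, as you note yourself later, the finite places in $S$ are already handled by the theorem, so no enlargement is needed.
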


\begin{remark}\label{rmk:salberger-model-measure}
    Note that $c_r = |\Xf(\of_S/\qf)|^{-1}$ if $\qf$ is coprime to all primes of bad reduction by Hensel's Lemma and the Chinese Remainder Theorem. Moreover, note that condition~\eqref{eq:tamagawa-like-assumption} holds for the Tamagawa measure $\tau_{\Pc'}$~\cite[Thm.~2.14]{Sal98} on a smooth variety $X$ if $\Pc'$ globally generates the anticanonical sheaf of the model, as $\tau_{\Pc'}$ is the model measure in that case.
    In the presence of a failure of the relevant adelic approximation property, the measure $\tau_{\Pc}$ should be interpreted as the restriction to the relevant Brauer--Manin set, say; in this case, $c_r$ and \eqref{eq:tamagawa-like-assumption} have to be modified by the relevant Brauer--Manin pairing and only be studied for sufficiently divisible moduli. We believe that a corollary along the lines of the above one formulated in the greatest possible generality would be too verbose to be of much use, however.
\end{remark}

\begin{proof}[Proof of Corollary~\ref{cor:equidistribution-by-counting-mod-q}]
    Write $X(\Ab_K) = X(K_S) \times X(\Ab_K^S)$ and fix one of the standard basis sets $W = \{x\in X(\Ab_K^S) : \congr{x}{r}{\qf}\}$ in the second factor. Now apply Theorem~\ref{thm:equidistribution-theorem} to $V\cap W$ to get equidistribution in the first factor.
\end{proof}

\begin{proof}[Proof of Corollary~\ref{cor:rational}]
This follows directly from Theorem~\ref{thm:equidistribution-theorem} and \cite{BD24}. Note that the theorem involves a gcd condition on the sets of polynomials that essentially prevents modifications to the height at finite places. It is, however, preserved by $\of_K$-linear combinations, hence Theorem~\ref{thm:equidistribution-theorem} is applicable with $S = \infty_K$.
\end{proof}

\begin{proof}[Proof of Corollary~\ref{cor:integral}]
For the log anticanonical model, this follows immediately from Theorem~\ref{thm:equidistribution-theorem} and~\cite{BD25integral}. The log anticanonical morphism contracts the exceptional divisors $E_1$ and $E_2$ meeting $\Df$. They contribute $0$ to the counting function, as points on them are excluded by its definition, and are null sets for $\tau_{D}$, as the latter is concentrated on $D$ and absolutely continuous on it. Equidistribution on $X$ thus follows.
\end{proof}

\section{Equidistribution on split del Pezzo surfaces of degree 5}\label{sec:dp5}

Let $X$ be a smooth split quintic del Pezzo surface over a number field $K$, and let $U$ be the complement of its ten lines. Let $H$ be a height function induced by a metric on the anticanonical bundle on $X$. Let $\tau_H$ be the corresponding Tamagawa measure on $X(\Ab_K)$ as in \cite[D\'efinition~4.6]{Pey03}, and let $\alpha(X)$ be its effective cone constant \cite[\S 4]{Pey03}.

Recall that $p\colon X \to \Pd^2_K$ is the blow-up of the four points $p_1,\dots,p_4$ as in \eqref{eq:4_points}; write $E$ for the exceptional divisor on $X$ and $\Ic$ for the ideal sheaf of $\{p_1,\dots, p_4\}$, so that $p^*\Ic = \Oc_X(-E)$. Then anticanonical global sections on $X$ correspond to cubic forms in $K[Y_1,Y_2,Y_3]$ vanishing in these points via the isomorphism
\begin{equation*}
    p^*(\Ic \Oc_{\Pd^2}(3))
    \cong \Oc_X(3H-E) \cong \omega_X^\vee
\end{equation*}
mapping such a cubic form $P$ to its product with the canonical section $s_E$ of $E$.
Similarly, anticanonical sections on $\Xf$ correspond to cubic forms in $\OK[Y_1,Y_2,Y_3]$ vanishing in them.
We call an anticanonical height function \emph{admissible} if it is induced by global generators of $\omega_{\Xf/\OK}^\vee$ that also generate the six-dimensional space of global sections. If $\Pc$ is the corresponding subset of $\OK[Y_1,Y_2,Y_3]$, its elements satisfy the coprimality condition
\begin{equation}\label{eq:coprimality-polynomial}
    \gcd_{P \in \Pc} P(y) = \frac{\gcd(y_2,y_3)\gcd(y_1,y_3)\gcd(y_1,y_2)\gcd(y_1-y_2,y_1-y_3)}{\gcd(y_1,y_2,y_3)}
\end{equation}
for all $y = (y_1:y_2:y_3) \in \Pd^2(K) \setminus\{p_1,\dots,p_4\}$. Indeed, take a primitive representation of a local point $y=(y_1:y_2:y_3)\in U(K_\pf)$. The latter can reduce to at most one of the centers of the blow-up, say $p_1=(1:0:0)$, in which case the right-hand side of~\eqref{eq:coprimality-polynomial} simplifies to $\gcd(y_2,y_3)$. The image of a polynomial $P\in \Pc\subset \Oc_{\Pd^2}(3)$  in
\begin{equation*}
    \omega_X^\vee \cong (p^*\Oc_{\Pd^2}(3))(-E)
\end{equation*}
 is its product with the canonical section of $E$, hence its corresponding vanishing order in the special fiber $y_\pf$ of $y$ is
\begin{equation}\label{eq:order-of-vanishing-P}
    v_{y_{\pf}}(Ps_E) = v_{y_{\pf}}(P) + v_{y_{\pf}}(s_{E}) = v_\pf(P(y)) - \min\{v_\pf(y_2),v_\pf(y_3)\};
\end{equation}
as this order is $0$ for at least one $P\in \Pc$ by virtue of the family being generating, the $\pf$-primary part of equation~\eqref{eq:coprimality-polynomial} follows.
In particular, every admissible height function is admissible in the sense of~\cite[\S 1.1]{BD24}, and concretely, $H$ can be defined as
\begin{equation*}
    H(x)\coloneq\prod_v \max_{P \in \Pc}|P(y_1,y_2,y_3)|_v
\end{equation*}
for any $x \in U(K)$ with $p(x)=(y_1:y_2:y_3) \in \Pd^2(K)$. 

On the other hand, every closed point can be lifted to a local point of the form $(y_1:y_2:y_3)\in U(L_\Pf)$ over a finite extension $L_\Pf/K_\pf$.
One may readily verify that the right-hand side of~\eqref{eq:order-of-vanishing-P} is $0$ for at least one polynomial in the set $\{P_\sigma, Q_\sigma : \sigma \in S_3\}$
of cubic forms as in~\cite[p.~2]{BD24} in each such point and deduce that the sheaf $\omega_{\Xf}^\vee$ is indeed globally generated.

Given a nonzero ideal $\qf$ of $\OK$, let $\pi_\qf \colon X(K)=\Xf(\OK) \to \Xf(\OK/\qf)$ be the reduction map, which factors through $\widetilde{\pi}_\qf \colon X(\Ab_K) \to \Xf(\OK/\qf)$. For $r \in \Xf(\OK/\qf)$, we want to estimate
\begin{equation*}
    N(H,r,B)\coloneq|\{x \in U(K) : H(x) \le B,\ \pi_\qf(x) = r\}|.
\end{equation*}

Let $\Delta_K$ be the discriminant of $K$, and $\rho_K$ be the residue of its Dedekind zeta function $\zeta_K(s)$ at $s=1$. In Section~\ref{sec:dp5}, all implicit constants in the error terms are allowed to depend on $K$, $H$, and $\qf$.

\begin{prop}\label{prop:dp5}
    Let $H$ be an admissible anticanonical height function, defined via a set $\Pc \subset \OK[Y_1,Y_2,Y_3]$ of cubic forms. Then
    \begin{equation*}
        N(H,r,B) = \alpha(X) \frac{\tau_H(X(\Ab_K))}{|\Xf(\OK/\qf)|}  B(\log B)^4(1+ O((\log\log B)^{-1/(3d+1)})).
    \end{equation*}
\end{prop}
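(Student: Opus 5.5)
We will follow the proof of the counting theorem in \cite{BD24} step by step and impose the congruence condition $\pi_\qf(x)=r$ throughout. The first step is the passage to the universal torsor. In \cite{BD24}, points $x\in U(K)$ are parametrized, up to the action of units and a finite choice of ideal class representatives $\cfrb$, by tuples $(\eta_1,\dots,\eta_{10})$ of nonzero elements in prescribed fractional ideals. These tuples satisfy the five Plücker-type torsor equations and coprimality conditions. The height becomes $\max_{P\in\Pc}|P(\eta)|_v$ at archimedean places, and this is legitimate because of the coprimality property~\eqref{eq:coprimality-polynomial} of admissible $\Pc$.

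Next we translate the condition $\pi_\qf(x)=r$. The reduction of $x$ modulo $\qf$ is determined by the classes of the $\eta_j$ modulo suitable powers of the primes dividing $\qf$, since the Cox coordinates are local coordinates on the model. We therefore split the torsor count into finitely many subsums. In each subsum every $\eta_j$ is restricted to a fixed residue class, or to a fixed $\pf$-adic valuation together with a unit residue class, and this is done for each $\pf\mid\qf$. Because $\qf$ is fixed, this splitting is finite, with the number of pieces depending only on $\qf$ and $K$.

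The counting in \cite{BD24} then proceeds by successive summation over the variables: first lattice point counting for the two variables appearing in a torsor equation, then summation of the resulting arithmetic functions over the remaining variables via the M\"obius-type averaging lemmas of Derenthal--Frei. Each of these steps is stable under congruence conditions of fixed modulus. In the lattice point step, a congruence condition modulo $\qf$ replaces the lattice by a translate of a sublattice of index $|\OK/\qf|^{\cdot}$. In the averaging steps, the local densities at $\pf\mid\qf$ are modified, while the error terms change only in their implied constants, which is allowed to depend on $\qf$. The archimedean volume, which after integration gives the real part of $\tau_H$, and the exponent $(\log B)^4$ together with $\alpha(X)$ are unaffected. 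Consequently the leading constant keeps the same shape, except that the Euler factors at $\pf\mid\qf$ are replaced by the $\pf$-adic measure of the residue set $\{x:\congr{x}{r}{\qf}\}$.

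The main obstacle is identifying this modified local factor. We must check that the $\pf$-adic density of torsor points mapping to $r$ equals $|\Xf(\OK/\qf)|^{-1}$ times the full density. We will do this by computing the density directly as $\lim_l |\{r'\in\Xf(\OK/\pf^l): r'\equiv r\}|\cdot N\pf^{-2l}$. By Hensel's lemma, since $\Xf$ is smooth over $\OK$ (the four points stay in general position modulo every prime), every residue class contains the same number of lifts $N\pf^{2(l-1)}$. This gives $c_r=|\Xf(\OK/\qf)|^{-1}$, consistent with Remark~\ref{rmk:salberger-model-measure}. Combined with Salberger's identification of the model measure with $\tau_H$, which is available because $\Pc$ globally generates $\omega_\Xf^\vee$, this shows that the local factor at $\pf$ is exactly the original one divided by $|\Xf(\OK/\pf^{v_\pf(\qf)})|$. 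Taking the product over $\pf\mid\qf$ and using the Chinese remainder theorem gives the stated constant. The error term $(\log\log B)^{-1/(3d+1)}$ is inherited unchanged from \cite{BD24}.
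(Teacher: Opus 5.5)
Your overall plan, lifting the condition $\pi_\qf(x)=r$ to the twisted torsors and rerunning \cite{BD24}, is the paper's, and the constant you predict is correct. However, the proposal asserts exactly what has to be proved. The sentence ``each of these steps is stable under congruence conditions \dots\ the Euler factors at $\pf\mid\qf$ are replaced by the $\pf$-adic measure of the residue set'' is the whole content of the proposition, and nothing afterwards supports it. The Hensel computation of $c_r$ and the appeal to Salberger only evaluate the target number, namely the $\tau_H$-measure of a residue disk. They say nothing about what the lattice-point counts and averaging lemmas of \cite{BD24} actually output. Note also that the statement already has $|\Xf(\OK/\qf)|$ explicitly in the constant, so neither $c_r$ nor Salberger is needed for it. What is needed is that the count produces the original Euler product divided by $|\Xf(\OK/\qf)|$. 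You could try a structural route (``the local factor produced by the torsor method is the $\pf$-adic volume of the admissible local torsor points''). But you would have to prove that for this argument, and then pass from $\cYf$ to $\Xf$ using that all fibers of $\cYf(\OK/\pf^l)\to\Xf(\OK/\pf^l)$ have size $|\Gm(\OK/\pf^l)|^5$; your density computation is done on $\Xf$ directly. So the step you call the main obstacle is the easy part.

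The missing work has four parts. (i) Three of the six $a_{jk}$ are not summed over but obtained from the torsor equations by dividing by $a_1$, $a_4$ or $a_1a_4$. Their congruence conditions must be rewritten as conditions on the summed variables modulo $a_4\Os_{24}\qf$ and $a_1\Os_{13}\qf$; the one for $a_{14}$ follows only because $\af_1+\af_4=\OK$. This yields a coset of a lattice of index $\N(\af_1\af_4\bfr_{12}\bfr_{23}\bfr_{34})$ (Lemma~\ref{lem:dependent_aij}). (ii) The M\"obius variables have to be restricted to ideals coprime to $\qf$, which changes the Euler factors at $\pf\mid\qf$. (iii) Your splitting into ``fixed valuation together with a unit residue class'' imposes conditions on the elements $a_i$, not on the ideals $\af_i$. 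The Derenthal--Frei averaging results, which sum functions of ideals, therefore do not apply as they stand. You would need a ray-class version for each individual class plus a check that the main terms agree across classes. The paper instead sums over the whole fiber ${}_\cfrb\rho^{-1}(r)$, a principal homogeneous space under $\Gm(\OK/\qf)^5$. Summing $\congr{a_i}{\lambda_i r_i}{\qf\Os_i}$ over $\lambda_i\in(\OK/\qf)^\times$ then becomes the ideal condition $\af_i+\qf=\rfr_i+\qf$, which is handled by the new Lemma~\ref{lem:sum_with_congruence}. (iv) The modified local factors must then be computed explicitly through the four successive summations over $a_4,a_3,a_2,a_1$. The result must be shown to be independent of $r$ and equal to $|\Xf(\OK/\qf)|^{-1}$ times the original Euler product. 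This uses that each $\pf\mid\qf$ divides at most one of $\rfr_1,\dots,\rfr_4$, which comes from the coprimality conditions in~\eqref{eq:cY(OK/q)}. None of this appears in the proposal.
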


More concretely, this modification of the Tamagawa measure `trivializes' the local densities for primes dividing $\qf$, in the sense that
\begin{equation}\label{eq:Tamagawa_volume_mod_q}
    \frac{\tau_H(X(\Ab_K))}{|\Xf(\OK/\qf)|} = \rho_K^5 |\Delta_K|^{-1}  \N\qf^{-2} \prod_{v\mid \qf} \lambda_v^{-1} 
    \prod_{v\nmid \qf}\lambda_v^{-1}\omega_{H,v}(X(K_v))
\end{equation}
with convergence factors
\begin{equation*}
    \lambda_\pf = \left(1-\frac{1}{\N\pf}\right)^{-5}
\end{equation*}
for finite primes $\pf$ and $\lambda_v=1$ for archimedean places $v$, and
\begin{equation*}
    \omega_{H,v}(X(K_v)) =
    \begin{cases}
        1+\frac{5}{\N\pf} + \frac{1}{\N\pf^2},
        & \text{if $v=\pf$ is finite}, \\
        \frac{3}{2} \vol\{y\in \Rd^3 : \max_{P\in \Pc}{|P(y)|_v\le 1}\}, & \text{if $v$ is real},\\
        \frac{12}{\pi} \vol\{y\in \Cd^3 : \max_{P\in \Pc}{|P(y)|_v\le 1}\}, & \text{if $v$ is complex.}
    \end{cases}
\end{equation*}
Indeed, all local factors for places not dividing $\qf$ can be found in~\cite[\S 1.2]{BD24}. If the anticanonical sections corresponding to $\Pc$ globally generate a suitable model, the induced Tamagawa measure is the model measure, assigning measure $\N\pf^{-2v_\pf(\qf)}$ to the residue disks $\{x\in \Xf(\of_\pf) : \congr{x}{r}{\pf^{v_p(\qf)}}\}$ as the model is smooth, whence $\N\qf^{-2}$ to their product.

   Note that, given $\qf$, the number of choices for $r \in \Xf(\OK/\qf)$ is 
    \begin{equation}\label{eq:dP5_mod_q}
        |\Xf(\OK/\qf)| = \N\qf^2 \prod_{\pf \mid \qf} \left(1+\frac{5}{\N\pf}+\frac{1}{\N\pf^2}\right)
    \end{equation}
    since $\Xf(\OK/\pf)$ has $\N\pf^2+5\N\pf+1$ points (because $\Xf$ is a blow-up of $\Pd_{\OK}^2$ in four points), since each such point has $\N\pf^{2(s-1)}$ lifts to $\Xf(\OK/\pf^s)$ by $s-1$ applications of Hensel's lemma, and using the Chinese remainder theorem to generalize to arbitrary $\qf$. Of course multiplying the leading constant in Proposition~\ref{prop:dp5} by this number of choices for $r$ recovers the leading constant in the main result of \cite{BD24}.

The remaining factor of the asymptotic formula is $\alpha(X)=1/144$~\citelist{\cite{Bre02}*{\S\,1.3} \cite{D07}*{Thm.~4}}.

For the proof, we incorporate the congruence condition into the proof of Manin's conjecture for $X$ in \cite{BD24}. In order to be reasonably brief, we have to assume detailed knowledge of that article, and we rely on its notation and conventions as in \cite[\S 1.5]{BD24}.
\subsection{Passage to a torsor}

We recall the five Pl\"ucker equations (see \cite{Skor93}, \cite[(3.6)]{BD24}) defining the universal torsor of $X$; here, we regard them as polynomials in ten variables with $\Zd$-coefficients:
\begin{equation*}
    \begin{aligned}
        f_1&=X_4X_{14}-X_3X_{13}+X_2X_{12},\\
        f_2&=X_4X_{24}-X_3X_{23}+X_1X_{12},\\
        f_3&=X_4X_{34}-X_2X_{23}+X_1X_{13},\\
        f_4&=X_3X_{34}-X_2X_{24}+X_1X_{14},\\
        f_0&=X_{12}X_{34}-X_{13}X_{24}+X_{23}X_{14}.
    \end{aligned}  
\end{equation*}
Using the grading of the Cox ring of $X$ by $\Pic(X)\cong \Zd^5$ with basis $\ell_0,\dots,\ell_4$, we have $\deg(X_i)=\ell_i$, $\deg(X_{jk})=\ell_0-\ell_j-\ell_k$ (see \cite[\S 3.1]{BD24}, for example), and hence
\begin{equation*}
    \deg(f_0)=2\ell_0-(\ell_1+\dots+\ell_4),\quad \deg(f_n)=\ell_0-\ell_n\text{ for }n=1,\dots,4.
\end{equation*}
Let $\Cc$ be a set of 5-tuples $\cfrb = (\cf_0,\dots,\cf_4)$ of fractional ideals representing $\Cl_K^5$.  Let $\Os_i = \cfrb^{\deg(X_i)}$, $\Os_{jk} = \cfrb^{\deg(X_{jk})}$, and $\ffr_n = \cfrb^{\deg(f_n)}$ for $n=0,\dots,4$ be the corresponding fractional ideals.

Recall the twisted universal torsors ${}_\cfrb\Yf \to \Xf$ from \cite[\S 2.2]{BD25integral}.
For each $\cfrb \in \Cc$, we consider the commutative diagram
\begin{equation}\label{eq:diagram}
   \begin{tikzcd}
 {}_\cfrb\Yf(\OK)\arrow[r, "\pi_\qf"] \arrow[d, "{}_\cfrb\rho"'] & 
 {}_\cfrb\Yf(\OK/\qf) \arrow[d, "{}_\cfrb\rho"] \\
X(K)=\Xf(\OK) \arrow[r, "\pi_\qf"']                    & \Xf(\OK/\qf)
    \end{tikzcd}
\end{equation}
and describe the points modulo $\qf$ in the upper right corner explicitly:

\begin{lemma}
    We have
    \begin{equation}\label{eq:cY(OK/q)}
        {}_\cfrb\Yf(\OK/\qf) =
        \left\{\begin{aligned}
            &\rb = (r_i+\qf \Os_i,r_{jk}+\qf\Os_{jk}) \in \Os_1/\qf\Os_1\times \dots\times\Os_{34}/\qf\Os_{34} :\\
            &\congr{f_n(\rb)}{0}{\ffr_n\qf}\text{ for }n=0,\dots,4,\\
            &\rfr_i+\rfr_j+\qf=\rfr_i+\rfr_{jk}+\qf=\rfr_{ij}+\rfr_{ik}+\qf=\OK 
        \end{aligned}\right\},
    \end{equation}
    where $\rfr_i=r_i\Os_i^{-1}$, $\rfr_{jk}=r_{jk}\Os_{jk}^{-1}$.
\end{lemma}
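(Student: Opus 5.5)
The description follows from how the twisted torsor ${}_\cfrb\Yf$ is built in \cite[\S 2.2]{BD25integral}. It is the open subscheme of the closed subscheme
\begin{equation*}
    {}_\cfrb\Yf_0 \subset \mathbb{V} = \prod_i \mathbb{V}(\Os_i)\times\prod_{j<k}\mathbb{V}(\Os_{jk})
\end{equation*}
defined by the five Pl\"ucker relations, viewed as maps into the twisted line bundles $\mathbb{V}(\ffr_n)$. One removes the closed subset where certain pairs of coordinates vanish simultaneously. These are the pairs $(X_i,X_j)$, $(X_i,X_{jk})$ and $(X_{ij},X_{ik})$, i.e.\ pairs of non-intersecting $(-1)$-curves in the relevant sense, the same coprimality pattern as in \cite[(3.7)]{BD24}. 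I would first write this presentation down explicitly as the fiber product of the ``equations'' part and the ``open'' part. Then I would compute the $R$-points for $R=\OK/\qf$ separately for each part and intersect.

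\textbf{The affine part.} I would use that for a fractional ideal $\af$ the $R$-points of the rank-one affine space $\mathbb{V}(\af)$ are $\af\otimes_{\OK}R=\af/\qf\af$. This gives the ambient product $\Os_1/\qf\Os_1\times\dots\times\Os_{34}/\qf\Os_{34}$. Each $f_n$ is homogeneous of degree $\deg(f_n)$, so each monomial of $f_n(\rb)$ lies in $\ffr_n$ (products of the $\Os$'s). Hence $f_n$ induces a well-defined map $\prod\Os_\bullet/\qf\Os_\bullet\to\ffr_n/\qf\ffr_n$. Well-definedness modulo $\qf$ is the routine check that changing $r_i$ by an element of $\qf\Os_i$ changes each monomial by an element of $\qf\ffr_n$. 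The equations $f_n=0$ on $R$-points become $\congr{f_n(\rb)}{0}{\ffr_n\qf}$.

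\textbf{The open part.} An $R$-point of $\mathbb{V}$ lies in the complement of $\{X_a=X_b=0\}$ exactly when the ideal generated by the two coordinates is the unit ideal in $R$ after local trivialization. Equivalently, for every prime $\pf\mid\qf$ the reductions of $r_a,r_b$ in $\Os_a\otimes\Fd_\pf$ and $\Os_b\otimes\Fd_\pf$ are not both zero. Indeed, $R$ is a finite product of local rings, and a point of an affine scheme over a local ring avoids a closed subset iff its closed point does. After trivializing $\Os_a$ near $\pf$, $r_a\Os_a^{-1}=\rfr_a$ is an integral ideal, well defined given $r_a$, and its image modulo $\qf$ depends only on the class. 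The nonvanishing condition at $\pf$ then says $\pf\nmid\rfr_a+\rfr_b$. Over all $\pf\mid\qf$ this is $\rfr_a+\rfr_b+\qf=\OK$.

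\textbf{Main obstacle.} The step needing care is the identification of the twisted torsor with this explicit open subscheme of a twisted affine space \emph{over $\OK$}, not just over $K$, so that evaluating at $\OK/\qf$ is legitimate. I would take this presentation from \cite{BD25integral}, where ${}_\cfrb\Yf$ is defined exactly by these equations and open conditions; then only the bookkeeping above remains. A secondary point is that $\rfr_i$ depends on the lift $r_i$, but the condition $\rfr_a+\rfr_b+\qf=\OK$ does not. This holds because $\rfr_a+\qf$ changes only by elements of $\qf$.
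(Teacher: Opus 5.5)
Your proposal is correct and follows essentially the same route as the paper. The paper also takes ${}_\cfrb\Yf=\Spec(\cR)\setminus V(\cI)$ from \cite{BD25integral} and identifies $\OK/\qf$-points with tuples in $\prod\Os_\bullet/\qf\Os_\bullet$ via $\Hom(\Os_i^{-1},\OK/\qf)=\Os_i/\qf\Os_i$. It turns the relations $\ffr_n^{-1}f_n$ into congruences modulo $\ffr_n\qf$ using flatness of $\ffr_n$, which is what your identification $\mathbb{V}(\ffr_n)(R)=\ffr_n/\qf\ffr_n$ encodes. It then reads off the open condition as $\phi(X_a\Os_a^{-1}+X_b\Os_b^{-1})$ generating $\OK/\qf$. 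The only cosmetic difference is that you check that open condition prime by prime over $\pf\mid\qf$, whereas the paper reads off $\rfr_a+\rfr_b+\qf=\OK$ in one global step.
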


Here and in the rest of this section, we adopt the convention from \cite[\S 1.5]{BD24} regarding the indices $i,j,k,l$, which are meant to run through all pairwise distinct values in $\{1,2,3,4\}$, with $r_{jk}=r_{kj}$, for example.

\begin{proof}
The twisted torsor $\cYf$ is $\Spec(\cR)\setminus \Spec (\cI)$, where
\begin{equation*}
    \cR = \OK [\Oc_1^{-1}X_1,\dots,\Oc_{34}^{-1}X_{34}]/\sum_{n=0}^4\mathfrak{f}_n^{-1} f_n.
\end{equation*}
The irrelevant ideal $\cI$ is the product ideal in $\cR$ of all ideals of the form $X_i\Oc_i^{-1}+X_j\Oc_j^{-1}$, $X_i\Oc_i^{-1}+X_{jk}\Oc_{jk}^{-1}$, and $X_{ij}\Oc_{ij}^{-1}+X_{jk}\Oc_{jk}^{-1}$.

To start with, note that $\Hom(\Oc_i^{-1},\OK/\qf) = \Hom(\OK, \Oc_i/\qf\Oc_i)=\Oc_i/\qf\Oc_i$.
A homomorphism $\phi\colon \cR \to \OK/\qf$ is thus given by a tuple
\begin{equation*}
    (r_1+\qf\Oc_1,\dots,r_{34}+\qf\Oc_{34}) \in \Oc_1/\qf\Oc_1\times \cdots \times \Oc_{34}/\qf\Oc_{34}.
\end{equation*}
It factors through the quotient --- that is, defines an integral point on $\Spec(\cR)$ --- if $\mathfrak{f}_n^{-1} f_n\subset \ker(\phi)$. This in turn is equivalent to
\begin{equation*}
f_n \in \ker(\phi)\otimes_{\OK}\mathfrak{f}_n = \ker(\phi\otimes_{\OK} \mathfrak{f}_n),
\end{equation*}
given that fractional ideals are projective modules, yielding the second row on the right-hand side of~\eqref{eq:cY(OK/q)}.

Such an integral point on $\Spec(\cR)$ avoids $V(\cI)$ if and only if $\phi(X_i\Oc_i^{-1}+X_j\Oc_j^{-1})=\OK/\qf$ (and analogously for the other generators). This simply means $(r_i\Oc_i^{-1} +\qf)+ (r_j\Oc_j^{-1}+\qf) = \OK/\qf$, which in turn is equivalent to the last row on the right-hand side of~\eqref{eq:cY(OK/q)}.
\end{proof}

In the following, whenever we consider an element $\rb \in {}_\cfrb\Yf(\OK/\qf)$, we write $(r_i,r_{jk}) \in \Os_1 \times \dots \times \Os_{34}$ for a 10-tuple of representatives of its coordinates.

Recall that we must count rational points congruent to $r \in \Xf(\OK/\qf)$. In view of the diagram \eqref{eq:diagram}, this lifts to congruence conditions
\begin{equation}\label{eq:congruence_on_torsor}
    \pi_\qf(a_i,a_{jk}) \in {}_\cfrb\rho^{-1}(r)
\end{equation}
on the torsor ${}_\cfrb\Yf(\OK/\qf)$ as follows:

\begin{lemma}\label{lem:parameterization_dP5}
    We have a bijection
    \begin{equation*}
        \bigsqcup_{\cfrb \in \Cc} \Ms_{\cfb,r}(B) \to \{x \in U(K) : H(x) \le B,\ \pi_\qf(x) = r\},
    \end{equation*}
    where $\Ms_{\cfb,r}(B)$ is the set of $(\OK^\times)^5$-orbits on
    \begin{equation*}
        \Sc_{\cfrb,r}(B) \coloneq \{(a_i,a_{jk}) \in \Os_1^{\ne 0}\times \dots \times\Os_{34}^{\ne 0} : \eqref{eq:congruence_on_torsor},\ \text{\cite[(3.5)--(3.7)]{BD24}}\} \subset {}_\cfrb\Yf(\OK).
    \end{equation*}
    In other words, $(a_i,a_{jk}) \in {}_\cfrb\Yf(\OK)$ whose projection has height $\le B$ lies in $\Sc_{\cfrb,r}(B)$ if and only if there is an $\rb \in {}_\cfrb\Yf(\OK/\qf)$ above $r$ such that $\congr{a_i}{r_i}{\qf\Os_i}$, and similarly for $a_{jk}$.
\end{lemma}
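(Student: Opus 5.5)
The plan is to start from the parametrization of \cite[Lemma~3.? / \S 3]{BD24}: there it is shown that the disjoint union over $\cfrb\in\Cc$ of the sets of $(\OK^\times)^5$-orbits of tuples $(a_i,a_{jk})\in\Os_1^{\ne0}\times\dots\times\Os_{34}^{\ne0}$ satisfying the torsor equations, coprimality and height conditions \cite[(3.5)--(3.7)]{BD24} is in bijection with $\{x\in U(K): H(x)\le B\}$. This bijection is given by ${}_\cfrb\rho$ on integral points of the twisted torsors, which by \cite[\S 2.2]{BD25integral} partition $\Xf(\OK)=X(K)$ (outside the lines, the coordinates are nonzero). It then suffices to check that, under this bijection, the condition $\pi_\qf(x)=r$ on the image corresponds exactly to \eqref{eq:congruence_on_torsor}. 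Since \eqref{eq:congruence_on_torsor} is invariant under the $(\OK^\times)^5$-action, the restricted map is again a bijection on orbits.

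The key step is commutativity of diagram~\eqref{eq:diagram}. This holds by functoriality: ${}_\cfrb\rho\colon \cYf\to\Xf$ is a morphism of $\OK$-schemes, and both reduction maps are induced by $\OK\to\OK/\qf$. Hence for $\ab\in\cYf(\OK)$ we have $\pi_\qf({}_\cfrb\rho(\ab))={}_\cfrb\rho(\pi_\qf(\ab))$, so $\pi_\qf({}_\cfrb\rho(\ab))=r$ if and only if $\pi_\qf(\ab)\in{}_\cfrb\rho^{-1}(r)$. One subtlety is identifying $X(K)=\Xf(\OK)$, which uses properness of $\Xf$ via the valuative criterion, so that reduction of rational points makes sense. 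Another is that the integral points of $\cYf$ in the parametrization are precisely the tuples satisfying \cite[(3.5),(3.6)]{BD24}, which is the $\OK$-analogue of the preceding lemma computing $\cYf(\OK/\qf)$.

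For the reformulation, I use the preceding lemma: $\pi_\qf(\ab)$ is the tuple $(a_i+\qf\Os_i,\ a_{jk}+\qf\Os_{jk})$. So $\pi_\qf(\ab)\in{}_\cfrb\rho^{-1}(r)$ means exactly that there is some $\rb\in\cYf(\OK/\qf)$ lying over $r$ with $a_i\equiv r_i \pmod{\qf\Os_i}$ and $a_{jk}\equiv r_{jk}\pmod{\qf\Os_{jk}}$. This holds for any choice of representatives.

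I expect the main obstacle to be bookkeeping rather than mathematics. One point is making sure that the unit action on $\cYf(\OK)$ is compatible with the torus action on $\cYf(\OK/\qf)$, so that the fibre ${}_\cfrb\rho^{-1}(r)$ is a union of orbits. The other is that the twisted torsors for different $\cfrb$ have disjoint images, which is already established in \cite{BD24,BD25integral}.
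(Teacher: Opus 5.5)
Your proposal is correct and follows the paper's proof: you restrict the parametrization of \cite[Proposition~3.1]{BD24} (the result you cite only loosely as ``\S 3'') and use commutativity of diagram~\eqref{eq:diagram} to translate $\pi_\qf(x)=r$ into \eqref{eq:congruence_on_torsor}. Your extra bookkeeping spells out what the paper leaves implicit: unit invariance is automatic, since the condition depends only on ${}_\cfrb\rho(\ab)$, and the explicit form of $\pi_\qf(\ab)$ comes from the preceding lemma.
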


\begin{proof}
    Recall \cite[Proposition~3.1]{BD24}, which parameterizes the points of bounded height in $U(K)$ in the same way, except that the congruence condition \eqref{eq:congruence_on_torsor} does not appear. The commutative diagram \eqref{eq:diagram} shows that $\pi_\qf(x)=r$ if and only if the corresponding points on the twisted torsors satisfy \eqref{eq:congruence_on_torsor}.
\end{proof}

\subsection{Symmetry}\label{sec:symmetry}

Recall the notation from \cite[\S 3.2]{BD24}. We define $\Msover_{\cfrb,r}^{(s)}(B)$, $\Msunder_{\cfrb,r}^{(s)}(B)$ by the symmetry conditions \cite[(3.11), (3.12)]{BD24}, respectively, as in \cite[Lemma~3.2]{BD24} with the additional congruence condition \eqref{eq:congruence_on_torsor}. By the same proof,
\begin{equation*}
    \sum_{s \in S} |\Msunder_{s(\cfrb),s(r)}^{(s)}(B)| \le |\Ms_{\cfrb,r}(B)| \le \sum_{s \in S} |\Msover_{s(\cfrb),s(r)}^{(s)}(B)|,
\end{equation*}
where $s(r)$ is obtained from $r$ by permutation of the coordinates and a sign change as described in the second paragraph of \cite[Lemma~3.2]{BD24}.

\subsection{Fundamental domain}

As in \cite[\S\S 3.3--3.4]{BD24}, we work with the sets 
\begin{equation*}
    \Os_*' = \Os_1^{\ne 0} \times \dots \times \Os_4^{\ne 0},\quad \Os'' = \Os_{12}\times \dots \times \Os_{34},
\end{equation*}
and the same fundamental domains $\Fs$, $\Fs_1$, $\Fs_0(\ab')$. We define $\Mover_{\cfrb,r}^{(s)}(B)$ as $\Mover_\cfrb^{(s)}(B)$ in \cite[\S 3.4]{BD24} with the additional congruence condition \eqref{eq:congruence_on_torsor}, satisfying the analogue
\begin{equation*}
     |\Msover_{\cfrb,r}^{(s)}(B)| = \frac{1}{|\mu_K|}\cdot |\Mover_{\cfrb,r}^{(s)}(B)|.
\end{equation*}
of \cite[(3.16)]{BD24}. We define $\Munder_{\cfrb,r}^{(s)}(B)$ analogously. As discussed in the third paragraph of \cite[\S 3.4]{BD24}, it suffices to consider $|\Mover_{\cfrb,r}^{(\id)}(B)|$.

\subsection{Restrictions}

The following result is the analogue of \cite[Proposition~4.7]{BD24}, which summarizes the restrictions of the counting problem in \cite[\S 4]{BD24}.

\begin{prop}
    We have
    \begin{equation*}
        |\Mover_{\cfrb,r}^{(\id)}(B)| = \sum_{\rb \in {}_\cfrb\rho^{-1}(r)} \sums{\ab' \in \Os_*' \cap \Fs_1^4\\\text{\cite[(4.9)]{BD24}}\\\congr{a_i}{r_i}{\qf\Os_i}} |A(W,\ab',\rb,\cfrb,B)| + O\left(\frac{B(\log B)^4}{(\log \log B)^{\frac{1}{3d+1}}}\right)
    \end{equation*}
    where $A(W,\ab',\rb,\cfrb,B)$ is defined as in \cite[\S 4.1]{BD24} with the six congruence conditions $\congr{a_{jk}}{r_{jk}}{\qf\Os_{jk}}$.
\end{prop}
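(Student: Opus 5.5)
We follow the proof of \cite[Proposition~4.7]{BD24} step by step, carrying the congruence condition \eqref{eq:congruence_on_torsor} along, and check at each step that it does not disturb the error terms. The plan is to first split by torsor residue class. By Lemma~\ref{lem:parameterization_dP5}, a point of ${}_\cfrb\Yf(\OK)$ satisfies \eqref{eq:congruence_on_torsor} if and only if its reduction is some $\rb\in{}_\cfrb\rho^{-1}(r)$. Since the reduction map to ${}_\cfrb\Yf(\OK/\qf)$ is a function, each point reduces to exactly one $\rb$. Hence $|\Mover_{\cfrb,r}^{(\id)}(B)|$ is the disjoint sum over $\rb\in{}_\cfrb\rho^{-1}(r)$ of the counts with the ten congruences $a_i\equiv r_i \bmod \qf\Os_i$ and $a_{jk}\equiv r_{jk} \bmod \qf\Os_{jk}$. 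Note that ${}_\cfrb\rho^{-1}(r)$ is finite, with cardinality bounded in terms of $\qf$.

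For each fixed $\rb$ we then rerun the restrictions of \cite[\S 4]{BD24}. These remove (a) points where some variable $a_i$ is large or small relative to $B$, leading to the height conditions \cite[(4.9)]{BD24} on $\ab'=(a_1,\dots,a_4)\in\Fs_1^4$, and (b) the contributions from exceptional configurations where the torsor variables fall outside the main range. Every such removed set was bounded in \cite{BD24} by an upper bound. An upper bound for the unrestricted count remains valid after adding a congruence condition, since the congruence only shrinks the set. Therefore each error term of \cite[\S 4]{BD24}, namely $O(B(\log B)^4(\log\log B)^{-1/(3d+1)})$, persists verbatim. Summing over the boundedly many $\rb$ (and $\cfrb$) only changes implied constants, which may depend on $\qf$.

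In the main term, the conditions on $\ab'$ now include $a_i\equiv r_i \bmod \qf\Os_i$. For fixed $\ab'$, the remaining variables $a_{jk}$ range over the set $A(W,\ab',\rb,\cfrb,B)$, which we define as in \cite[\S 4.1]{BD24} together with the six congruences $a_{jk}\equiv r_{jk} \bmod \qf\Os_{jk}$. This gives exactly the displayed formula.

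The one point needing care is that the restrictions in \cite[\S 4]{BD24} are not all pure upper bounds. Some are manipulations of the coprimality conditions, for instance replacing gcd conditions involving the $a_{jk}$ by conditions on $\ab'$ alone, or M\"obius inversions. I would check that these are exact identities on sets, so they commute with intersecting by the congruence class. I would also check that the congruence conditions are compatible with the coprimality conditions, since they are encoded in the last row of \eqref{eq:cY(OK/q)}. The main obstacle I anticipate is where \cite[\S 4]{BD24} uses a lower-bound-sensitive or lattice-point asymptotic (rather than an upper bound) to discard a range. There I would either bound the congruence-restricted count trivially by the unrestricted one, or note that the relevant lattice counts are over sublattice cosets of index $\ll_\qf 1$, which preserves the error terms up to constants depending on $\qf$.
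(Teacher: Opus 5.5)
Your proposal is correct and is essentially the paper's argument: every restriction in \cite[\S 4]{BD24} removes a set whose cardinality is bounded from above, and imposing the congruence conditions (splitting by the reduction $\rb \in {}_\cfrb\rho^{-1}(r)$) only passes to subsets of these removed sets, so the same error term survives. The M\"obius inversions you mention do not occur in this step; they come only in the next proposition, where the paper also restricts $\dfr_i,\efr_i$ to be coprime to $\qf$.
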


For the proof, we note that the arguments in \cite[\S 4]{BD24} estimate error terms arising from restrictions of our counting problem. Due to our additional congruence conditions, we consider subsets of those sets, which only lead to smaller error terms.

\subsection{Dependent coordinates}

We generalize \cite[Lemma~3.3]{BD24} as follows.

\begin{lemma}\label{lem:dependent_aij}
    Let $\rb \in {}_\cfrb\rho^{-1}(r)$.
    Let $\ab' \in \Os_*'$ with $\congr{a_i}{r_i}{\qf\Os_i}$, and $(a_{12},a_{23},a_{34}) \in \Os_{12}\times\Os_{23}\times\Os_{34}$ with $\congr{a_{jk}}{r_{jk}}{\qf\Os_{jk}}$.
    If 
    \begin{equation}\label{eq:mod_a4}
        \congr{a_3a_{23}}{a_1a_{12}+a_4r_{24}}{a_4\Os_{24}\qf}
    \end{equation}
    and
    \begin{equation}\label{eq:mod_a1}
        \congr{a_4a_{34}}{a_2a_{23}+a_1r_{13}}{a_1\Os_{13}\qf}
    \end{equation}
    hold, then we obtain unique
    \begin{equation*}
        \begin{aligned}
            a_{13}&=\frac{a_2a_{23}-a_4a_{34}}{a_1},\\
            a_{24}&=\frac{a_3a_{23}-a_1a_{12}}{a_4},\\
            a_{14}&=\frac{a_2a_3a_{23}-a_3a_4a_{34}-a_1a_2a_{12}}{a_1a_4}
        \end{aligned}
    \end{equation*}
    satisfying the torsor equations \cite[(3.6)]{BD24}, with $\congr{a_{13}}{r_{13}}{\qf\Os_{13}}$ and $\congr{a_{24}}{r_{24}}{\qf\Os_{24}}$. If additionally $\af_1+\af_4=\OK$, then $\congr{a_{14}}{r_{14}}{\qf\Os_{14}}$. If \eqref{eq:mod_a4} or \eqref{eq:mod_a1} does not hold, \cite[(3.6)]{BD24} has no solution satisfying the congruence conditions $\congr{a_{jk}}{r_{jk}}{\qf\Os_{jk}}$.
\end{lemma}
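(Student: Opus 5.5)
We follow the proof of \cite[Lemma~3.3]{BD24}, which gives existence and uniqueness of $a_{13},a_{24},a_{14}$ from the integrality conditions $a_1\mid a_2a_{23}-a_4a_{34}$ and $a_4\mid a_3a_{23}-a_1a_{12}$ (in the sense $a_2a_{23}-a_4a_{34}\in a_1\Os_{13}$, and analogously). The extra work is to keep track of the congruences modulo $\qf$. Uniqueness and the displayed formulas are immediate: since $a_1,a_4\neq 0$, the torsor equations $f_3=0$ and $f_2=0$ force $a_{13}$ and $a_{24}$, and then $f_4=0$ (or $f_1=0$) forces $a_{14}$. That the resulting tuple satisfies all five equations $f_0,\dots,f_4$ is the purely algebraic Plücker identity already used in \cite[Lemma~3.3]{BD24}. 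So the substance is integrality together with the congruences.

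For $a_{24}$: the torsor equation $f_2(\rb)\equiv 0$ holds modulo $\ffr_2\qf$, with $\ffr_2=\Os_4\Os_{24}$ by the degree computation. So $a_1a_{12}-a_3a_{23}+a_4r_{24}\equiv a_1r_{12}-a_3r_{23}+a_4r_{24}+(\text{terms in }\qf\Os_4\Os_{24})$, using $a_i\equiv r_i$ and $a_{jk}\equiv r_{jk}$. Hence condition~\eqref{eq:mod_a4} says that $a_3a_{23}-a_1a_{12}-a_4r_{24}\in a_4\Os_{24}\qf$. Dividing by $a_4$ gives $a_{24}-r_{24}\in\qf\Os_{24}$, which yields both integrality and the congruence at once.

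Conversely, suppose some solution $a_{24}\in\Os_{24}$ exists with $a_{24}\equiv r_{24}$. Then $a_3a_{23}-a_1a_{12}=a_4a_{24}\equiv a_4r_{24}\pmod{a_4\Os_{24}\qf}$, so \eqref{eq:mod_a4} holds; this proves the final ``no solution'' clause. The same argument with $f_3$ and $\ffr_3=\Os_1\Os_{13}$ handles $a_{13}$ and~\eqref{eq:mod_a1}.

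For $a_{14}$, integrality follows as in \cite{BD24}: $a_1a_4a_{14}$ is divisible by $a_1$ and by $a_4$ via $f_4$ and $f_1$. The congruence uses $\af_1+\af_4=\OK$. From $f_1=0$ we get $a_4a_{14}=a_3a_{13}-a_2a_{12}\equiv r_3r_{13}-r_2r_{12}\equiv r_4r_{14}$ modulo $\qf\Os_4\Os_{14}$, using $f_1(\rb)\in\ffr_1\qf$. This gives $a_4(a_{14}-r_{14})\in\qf\Os_4\Os_{14}$, up to the replacement of $r_4$ by $a_4$, which is allowed since $a_4\equiv r_4$. Similarly, from $f_4=0$ we get $a_1(a_{14}-r_{14})\in\qf\Os_1\Os_{14}$. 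Writing $\af_i=a_i\Os_i^{-1}$, the ideal $(a_{14}-r_{14})\Os_{14}^{-1}\qf^{-1}$ multiplied by $\af_4$ and by $\af_1$ becomes integral. Since $\af_1+\af_4=\OK$, the ideal itself is integral, which gives $a_{14}\equiv r_{14}\pmod{\qf\Os_{14}}$.

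The main obstacle is the bookkeeping of fractional ideals, namely checking that $\ffr_n$ matches the product of the $\Os$'s of the monomials in $f_n$. It is also where the coprimality hypothesis on $\af_1,\af_4$ is genuinely needed: without it one only gets the congruence for $a_{14}$ modulo a divisor of $\qf$.
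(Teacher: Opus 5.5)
Your route is the intended one: the paper gives no separate proof, only the remark that the lemma generalizes \cite[Lemma~3.3]{BD24}. The parts you actually carry out are correct.
\begin{itemize}
\item The $a_{24}$ step works. Your preliminary computation with $f_2(\rb)$ is unnecessary, because \eqref{eq:mod_a4} literally says $a_4(a_{24}-r_{24})\in a_4\Os_{24}\qf$.
\item The converse for \eqref{eq:mod_a4} is correct.
\item The $a_{14}$ argument is correct once $a_{13}\equiv r_{13}$ is available. It uses $f_1$, $f_4$, $\ffr_1=\Os_4\Os_{14}$, $\ffr_4=\Os_1\Os_{14}$ and $\af_1+\af_4=\OK$.
\end{itemize}

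The genuine gap is the sentence ``the same argument with $f_3$ handles $a_{13}$ and \eqref{eq:mod_a1}''. It does not. The equation $f_3=0$ gives $a_1a_{13}=a_2a_{23}-a_4a_{34}$. So \eqref{eq:mod_a1} as printed says $-a_1a_{13}\equiv a_1r_{13}$ modulo $a_1\Os_{13}\qf$, that is, $a_{13}\equiv -r_{13}\Mod{\qf\Os_{13}}$. The statement as printed is therefore false, as the following example shows.
\begin{itemize}
\item Take $K=\mathds{Q}$, so all $\Os_\bullet=\Zd$, and $\qf=(3)$.
\item Let $\rb$ have $(r_1,r_2,r_3,r_4)=(0,1,1,1)$, $r_{12}=0$ and all other $r_{jk}=1$. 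One checks that this lies in ${}_\cfrb\Yf(\Zd/3\Zd)$.
\item Let $\ab'=(3,1,1,1)$ and $(a_{12},a_{23},a_{34})=(0,1,4)$.
\end{itemize}
Then \eqref{eq:mod_a4} holds modulo $3$ and \eqref{eq:mod_a1} holds modulo $9$. Yet $a_{13}=(1-4)/3=-1\not\equiv r_{13}\Mod{3}$. Also $a_{14}=-1\not\equiv r_{14}$, even though $\af_1+\af_4=\OK$. The final ``no solution'' clause fails for the same reason: a genuine solution with $a_{13}\equiv r_{13}$ satisfies $a_2a_{23}-a_4a_{34}\equiv a_1r_{13}$, not \eqref{eq:mod_a1}.

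The fix is to replace \eqref{eq:mod_a1} by $\congr{a_2a_{23}}{a_4a_{34}+a_1r_{13}}{a_1\Os_{13}\qf}$. With that change, your $a_{24}$ argument applies verbatim to $a_{13}$. Your $a_{14}$ congruence also depends on this correction, since its $f_1$ step uses $a_3a_{13}\equiv r_3r_{13}$. So the ``analogous'' case you skipped is exactly where the statement needs repair, and it should be written out.

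A smaller point concerns integrality of $a_{14}$, which is not unconditional. For example, $a_1=a_4=2$, $a_2=a_3=1$ and $(a_{12},a_{23},a_{34})=(0,2,0)$ give $a_{13}=a_{24}=1$ but $a_{14}=1/2$. Integrality needs $\af_1+\af_4=\OK$. Under that hypothesis it follows for free from your congruence argument, since $a_{14}\in r_{14}+\qf\Os_{14}$. For the same reason, your closing remark is inaccurate: without coprimality one only gets $a_{14}-r_{14}\in(\af_1+\af_4)^{-1}\qf\Os_{14}$, not a congruence modulo a divisor of $\qf$.
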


\subsection{M\"obius inversions}

We will encounter the conditions
\begin{align}
    &\dfr_i+\af_j=\OK,\quad \dfr_i+\qf = \OK,\label{eq:dfrb}\\
    &\efr_i \mid \af_i,\quad \efr_i+\qf = \OK,\label{eq:efrb}\\
    &\ffr_{ij}\coloneq(\dfr_i\cap\dfr_j)\efr_k\efr_l \mid \af_{ij},\label{eq:moebius}
\end{align}
with $\dfrb = (\dfr_1,\dots,\dfr_4), \efrb = (\efr_1,\dots,\efr_4) \in \IK^4$.

Let $\Gs = \Gs(\cfrb,\ab',\rb,\dfrb,\efrb)$ be the subset of $K^3$ consisting of all $(a_{12},a_{23},a_{34})$ for which all six coordinates $a_{ij}$ (obtained via Lemma~\ref{lem:dependent_aij}) are integral and satisfy $\congr{a_{ij}}{r_{ij}}{\qf\Os_{ij}}$ and \eqref{eq:moebius}.

\begin{prop}
    We have
    \begin{align*}
        |\Mover_{\cfrb,r}^{(\id)}(B)| ={}&\sum_{\rb \in {}_\cfrb\rho^{-1}(r)} \sums{\ab' \in \Os_*' \cap \Fs_1^4\\\text{\cite[(4.9)]{BD24}}\\\congr{a_i}{r_i}{\qf\Os_i}} \theta_0(\afrb') 
        \sums{\dfrb: \eqref{eq:dfrb},\ \N\dfr_i\le T_1\\\efrb:\eqref{eq:efrb},\ \N\efr_i \le T_1}\mu_K(\dfrb,\efrb)\\
        &\times |\Gs(\cfrb,\ab',\rb,\dfrb,\efrb)\cap \Fs_0^{(W)}(\ab';u_\cfrb B)|
        + O\left(\frac{B(\log B)^4}{(\log \log B)^{\frac{1}{3d+1}}}\right).
    \end{align*}
\end{prop}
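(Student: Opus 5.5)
We follow the proof of the analogous Möbius inversion step in \cite[\S 5]{BD24} (the proposition preceding the first counting estimate there), starting from the restricted count in the previous proposition. We then track how the congruence conditions $\congr{a_{jk}}{r_{jk}}{\qf\Os_{jk}}$ interact with each step. First, for fixed $\rb\in{}_\cfrb\rho^{-1}(r)$ and $\ab'$ with $\congr{a_i}{r_i}{\qf\Os_i}$, we use Lemma~\ref{lem:dependent_aij} to rewrite $|A(W,\ab',\rb,\cfrb,B)|$ as a count of triples $(a_{12},a_{23},a_{34})$ with the six congruences. These triples also satisfy the remaining coprimality conditions of \cite[(3.7)]{BD24} among the $\af_{jk}$ and between the $\af_{jk}$ and the $\af_i$, and lie in the region $\Fs_0^{(W)}(\ab';u_\cfrb B)$.

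The central step is to remove these coprimality conditions by Möbius inversion, exactly as in \cite{BD24}. Conditions of the shape $\af_{ij}+\af_{ik}=\OK$ are detected via ideals $\dfr_i$, and conditions coupling $\af_{jk}$ with $\af_i$ via ideals $\efr_i\mid\af_i$. Together these produce the divisibility $\ffr_{ij}\mid\af_{ij}$ in \eqref{eq:moebius}, the weight $\mu_K(\dfrb,\efrb)$, and the factor $\theta_0(\afrb')$ from the part depending only on $\ab'$.

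The new point is the coprimality of $\dfr_i,\efr_i$ with $\qf$ in \eqref{eq:dfrb} and \eqref{eq:efrb}. It comes from the last row of \eqref{eq:cY(OK/q)}. Since $\rb$ lies on ${}_\cfrb\Yf(\OK/\qf)$, the ideals $\rfr_{ij}+\rfr_{ik}+\qf$ and $\rfr_i+\rfr_{jk}+\qf$ are trivial. Hence any prime dividing both $\af_{ij},\af_{ik}$ (or both $\af_i,\af_{jk}$) cannot divide $\qf$, because $a_{ij}\equiv r_{ij}$ modulo $\qf\Os_{ij}$. So for primes $\pf\mid\qf$ the coprimality conditions hold automatically under the congruences. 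We can therefore run the Möbius inversion only over primes coprime to $\qf$, which forces $\dfr_i+\qf=\efr_i+\qf=\OK$. Similarly, $\efr_i\mid\af_i$ with $\efr_i+\qf=\OK$ and the conditions $\dfr_i+\af_j=\OK$ arise as in \cite{BD24}.

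The truncation to $\N\dfr_i,\N\efr_i\le T_1$ is handled as in \cite{BD24}. The tail is bounded by a count of points with a large common divisor among the coordinates. Dropping the congruence conditions only enlarges the sets, so the error bound $O(B(\log B)^4(\log\log B)^{-1/(3d+1)})$ carries over verbatim.

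The main obstacle I expect is the bookkeeping at primes dividing $\qf$. We must check carefully that, given the congruences, every coprimality condition of \cite[(3.7)]{BD24} is automatic at such primes, including those involving the dependent $a_{13},a_{24},a_{14}$. Here Lemma~\ref{lem:dependent_aij} gives their congruences, and for $a_{14}$ this needs $\af_1+\af_4=\OK$, which is among the conditions \cite[(4.9)]{BD24}. Otherwise the argument is a transcription of \cite{BD24}, with the lattice $\Gs$ now incorporating the residue classes.
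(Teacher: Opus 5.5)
Your proposal is correct and matches the paper's proof. The paper applies M\"obius inversion as in \cite[Lemma~5.1]{BD24} and restricts $\dfr_i,\efr_i$ to be coprime to $\qf$, because the congruences with $\rb\in{}_\cfrb\Yf(\OK/\qf)$ make the coprimality conditions automatic at primes dividing $\qf$ (the paper says equivalently that $\Gs$ is empty otherwise); it then handles the truncation $\N\dfr_i,\N\efr_i\le T_1$ as in \cite[Proposition~5.4]{BD24}, noting that the congruence conditions can only shrink the error terms.
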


\begin{proof}
    We apply M\"obius inversion as in \cite[Lemma~5.1]{BD24}. Here, we may restrict the M\"obius variables $\dfr_i,\efr_i$ to be coprime to $\qf$ since $\Gs$ is empty otherwise because of the congruence conditions modulo $\qf$.

    Next, we restrict the range of $\dfr_i,\efr_i$ as in \cite[Proposition~5.4]{BD24}; since we consider the subset satisfying our congruence conditions, the error terms can only be smaller.
\end{proof}

We define the fractional ideals
\begin{align*}
    \bfr_{12} &\coloneq (\dfr_1\cap\dfr_2\cap(\dfr_3+\dfr_4))\efr_3\efr_4\Os_{12}\qf,\\
    \bfr_{23} &\coloneq (\dfr_2\cap\dfr_3\cap\dfr_4)\efr_1\Os_{23}\qf,\\
    \bfr_{34} &\coloneq (\dfr_1\cap\dfr_3\cap\dfr_4)\efr_2\Os_{34}\qf.
\end{align*}
Then $\Gs = \gammab + \Gs'$ for some $\gammab = \gammab(\ab',\rb,\cfrb,\dfrb,\efrb) \in K^3$ and an additive subgroup $\Gs' = \Gs'(\ab',\cfrb,\dfrb,\efrb)$ of $K^3$ given by congruence conditions on $a_{12},a_{23},a_{34}$ modulo $\bfr_{12},\bfr_{23},\bfr_{34}$, respectively.

As in \cite[Lemma~5.5, Proposition~5.7]{BD24}, we obtain
\begin{align*}
    |\Gs(\cfrb,\ab',\rb,\dfrb,\efrb)\cap \Fs_0^{(W)}(\ab';u_\cfrb B)| ={}& \frac{2^{3r_2}\vol S_F^{(W)}(\ab';u_\cfrb B)}{|\Delta_K|^{\frac 3 2}\N(\af_1\af_4\bfr_{12}\bfr_{23}\bfr_{34})}\\
    &+O\left(\frac{W^{3d}B}{T_2^{1/3}|N(a_1\cdots a_4)|}\right),
\end{align*}
where the total contribution of the error term is negligible.

By \cite[Lemma~5.8]{BD24} and \cite[Lemma~5.12]{BD24} (where our restriction to $\dfr_i,\efr_i$ coprime to $\qf$ changes the Euler factors of $\theta(\afrb')$ to $1$ for all $\pf \mid \qf$), we obtain the following analogue of \cite[Proposition~5.13]{BD24} (where the sum over $a_1,\dots,a_4$ and $\cfrb$ is not turned into a sum over ideals $\af_1,\dots,\af_4$ because of our congruence conditions on $a_i$):

\begin{prop}\label{prop:result_of_sum_over_aij}
    We have
    \begin{align*}
        |\Mover_{\cfrb,r}^{(\id)}(B)| ={}& \frac{2^{3r_2}\vol(S_F^{(W)}(\oneb;1))}{|\Delta_K|^{3/2}} \sum_{\rb \in {}_\cfrb\rho^{-1}(r)} \sums{\ab' \in \Os_*' \cap \Fs_1^4\\\text{\cite[(4.9)]{BD24}}\\\congr{a_i}{r_i}{\qf\Os_i}} \frac{\theta_0(\afrb')\theta(\afrb')B}{\N(\af_1\af_2\af_3\af_4)}\\
        &+ O\left(\frac{B(\log B)^4}{(\log \log B)^{\frac{1}{3d+1}}}\right),
    \end{align*}
    where
    \begin{equation*}
        \theta(\afrb') = \frac{1}{\N\qf^3}\prods{\pf \mid \af_1\af_2\af_3\af_4\\\pf \nmid \qf} \left(1-\frac{1}{\N\pf}\right)\left(1-\frac{1}{\N\pf^2}\right) \cdot \prods{\pf \nmid \af_1\af_2\af_3\af_4\\\pf \nmid \qf} \left(1-\frac{4}{\N\pf^2}+\frac{3}{\N\pf^3}\right).
    \end{equation*}
\end{prop}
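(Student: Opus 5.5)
The plan is to start from the previous proposition, which expresses $|\Mover_{\cfrb,r}^{(\id)}(B)|$ as a sum over $\rb\in{}_\cfrb\rho^{-1}(r)$, over $\ab'$ with congruences $\congr{a_i}{r_i}{\qf\Os_i}$, and over Möbius variables $\dfrb,\efrb$ coprime to $\qf$. The inner count is $|\Gs(\cfrb,\ab',\rb,\dfrb,\efrb)\cap\Fs_0^{(W)}(\ab';u_\cfrb B)|$, and it has already been replaced by its volume main term $2^{3r_2}\vol S_F^{(W)}(\ab';u_\cfrb B)/(|\Delta_K|^{3/2}\N(\af_1\af_4\bfr_{12}\bfr_{23}\bfr_{34}))$ with a negligible error. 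I would first use the homogeneity of $S_F^{(W)}$ exactly as in \cite[Lemma~5.8]{BD24}. This rewrites the volume as $\vol(S_F^{(W)}(\oneb;1))\cdot B\cdot\N(\af_1\af_4)/\N(\af_1\af_2\af_3\af_4)$ up to the normalisations there, so that the factor $\N(\af_1\af_4)$ cancels against the denominator. The $\qf$-dependence then enters only through the three factors $\N\qf$ hidden in $\N(\bfr_{12}\bfr_{23}\bfr_{34})$, since each $\bfr_{jk}$ contains the factor $\qf$. This accounts for the $\N\qf^{-3}$ in $\theta(\afrb')$.

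Next I would carry out the sum over $\dfrb,\efrb$ as in \cite[Lemma~5.12]{BD24}. I would extend it from $\N\dfr_i,\N\efr_i\le T_1$ to all ideals at the cost of a tail error, bounded as there, since restricting to $\dfr_i,\efr_i$ coprime to $\qf$ only shrinks the sum. The resulting multiplicative sum $\sum\mu_K(\dfrb,\efrb)/\N((\dfr_1\cap\dfr_2\cap(\dfr_3+\dfr_4))\efr_3\efr_4\cdots)$ factors as an Euler product. For $\pf\nmid\qf$ the local factors are exactly those computed in \cite{BD24}: $(1-\N\pf^{-1})(1-\N\pf^{-2})$ when $\pf\mid\af_1\cdots\af_4$, and $1-4\N\pf^{-2}+3\N\pf^{-3}$ otherwise. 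For $\pf\mid\qf$ only the trivial term $\dfr_i=\efr_i=\OK$ survives, so the local factor is $1$. The coprimality of $\dfr_i$ to $\af_j$ and the condition $\efr_i\mid\af_i$ from \eqref{eq:dfrb}, \eqref{eq:efrb} select the correct local factor in each case. Combining these pieces gives the displayed formula with $\theta(\afrb')$ as stated.

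The step I expect to be the main obstacle is the bookkeeping in the volume computation. One must check that the lattice $\Gs'$ has covolume exactly $\N(\af_1\af_4\bfr_{12}\bfr_{23}\bfr_{34})$ times the standard factor when the congruences modulo $\qf$ are added. In particular, adjoining the congruences modulo $\qf$ to the conditions of Lemma~\ref{lem:dependent_aij} must multiply the index by exactly $\N\qf^3$, with no extra interaction with $\af_1,\af_4$. This holds because $\rb$ is a point on the torsor modulo $\qf$, so \eqref{eq:mod_a4} and \eqref{eq:mod_a1} define cosets. The invertibility of $a_i$ modulo $\qf$ for primes $\pf\mid\qf$ comes from the coprimality rows of \eqref{eq:cY(OK/q)}, and I would argue prime by prime via CRT. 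Here the hypothesis $\af_1+\af_4=\OK$ from \cite[(4.9)]{BD24} is used, so that the condition on $a_{14}$ imposes no further restriction. Finally, the error terms from the tail of the Möbius sums and from the lattice point count are summed over $\ab'$ exactly as in \cite{BD24}. Since the congruences only restrict the summation, they stay within $O(B(\log B)^4(\log\log B)^{-1/(3d+1)})$.
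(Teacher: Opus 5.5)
Your proposal is correct and follows the paper's route. It uses \cite[Lemma~5.8]{BD24} for the volume, which cancels $\N(\af_1\af_4)$, and the three copies of $\qf$ in $\bfr_{12}\bfr_{23}\bfr_{34}$ for the factor $\N\qf^{-3}$. It then uses \cite[Lemma~5.12]{BD24} for the Möbius sum, whose Euler factors at $\pf\mid\qf$ become $1$ because $\dfr_i,\efr_i$ are coprime to $\qf$.

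The covolume check you single out is already taken as established in the paper just before the proposition (as in \cite[Lemma~5.5, Proposition~5.7]{BD24}). It rests on $\af_1+\af_4=\OK$ and the torsor equations for $\rb$ modulo $\qf$, not on invertibility of the $a_i$ modulo $\qf$; the rows of \eqref{eq:cY(OK/q)} only say that each $\pf\mid\qf$ divides at most one $\rfr_i$.
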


\subsection{Completion of the proof}

For the summations over $a_1,\dots,a_4$, we need the following generalization of \cite[Corollary~2.7]{DF14} (similar to \cite[Corollary~6.9]{D09}, which allows a congruence condition, but only for congruence classes coprime to the modulus):

\begin{lemma}\label{lem:sum_with_congruence}
    Let $\theta \in \Theta(\bfr,C_1,C_2,C_3)$ (see \cite[Definition~2.1]{DF14}) with corresponding local factors $\theta_\pf(n)$ (i.e., $\theta(\af) = \prod_\pf \theta_\pf(v_\pf(\af))$ for all $\af \in \IK$). Let $\qf$ be an ideal in $\OK$, and $\Os$ be a fractional ideal of $K$. Let $r \in \Os$, and $\rfr = r\Os^{-1}$. Then
    \begin{align*}
        \sum_{\lambda+\qf \in (\OK/\qf)^\times}\!\sums{a \in \Os^{\ne 0}\cap \Fs_1\\\N(a\Os^{-1}) \le t\\\congr{a}{\lambda r}{\qf\Os}} \!\theta(a\Os^{-1}) = \frac{\rho_K}{h_K} \theta' t+O_{C_2}(\tau_K(\bfr\qf)(C_1'C_2)^{\omega_K(\bfr\qf)}C_3 t^{1-1/d}),
    \end{align*}
    where
    \begin{equation*}
        \theta' = \prod_\pf \theta'_\pf
    \end{equation*}
    with
    \begin{equation*}
        \theta'_\pf = \begin{cases}
            \left(1-\frac{1}{\N\pf}\right)\sum_{n=0}^\infty\frac{\theta_{\pf}(n)}{\N\pf^n}, & \pf \nmid \qf,\\
            \left(1-\frac{1}{\N\pf}\right)\theta_{\pf}(v_\pf(\rfr)), & v_\pf(\rfr) < v_\pf(\qf),\\
            \left(1-\frac{1}{\N\pf}\right)^2\sum_{n=v_\pf(\qf)}^\infty\frac{\theta_{\pf}(n)}{\N\pf^{n-v_\pf(\qf)}}, &v_\pf(\rfr) \ge v_\pf(\qf)>0,
        \end{cases}
    \end{equation*}
    and $C_1'=\max_{\pf\mid\qf}\theta_\pf(0)+\max\{C_1,C_2\}\max_{\pf\mid\qf}v_\pf(\qf)$.
\end{lemma}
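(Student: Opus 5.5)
The plan is to reduce to \cite[Corollary~2.7]{DF14}, which gives $\sum_{\af\in\IK,\ \N\af\le t,\ [\af]=[\Os^{-1}]^{-1}\cdots}\theta(\af)$ over ideals in a fixed class. Concretely, I would split the left-hand side according to the ideal $\af = a\Os^{-1}$. Elements $a\in\Os^{\ne0}\cap\Fs_1$ correspond bijectively, up to the action of the roots of unity, to integral ideals $\af$ in the class of $\Os^{-1}$. The congruence $\congr{a}{\lambda r}{\qf\Os}$ for some unit $\lambda$ modulo $\qf$ then becomes a condition on $\af$ alone. Since we sum over all $\lambda\in(\OK/\qf)^\times$, the unit ambiguity disappears. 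The condition is: $v_\pf(\af)=v_\pf(\rfr)$ for all $\pf\mid\qf$ with $v_\pf(\rfr)<v_\pf(\qf)$, and $v_\pf(\af)\ge v_\pf(\qf)$ otherwise. In addition, the class of $a$ modulo $\qf\Os$ must be a unit multiple of $r$.

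To make this precise, I would first factor $\qf$ into prime powers and work prime by prime via CRT. For a given $\af$ with the right $\pf$-adic valuations, I would count how many $\lambda$ satisfy $a\equiv\lambda r$. When $v_\pf(\rfr)<v_\pf(\qf)$, the number of such $\lambda$ modulo $\pf^{v_\pf(\qf)}$ equals the size of the stabiliser of $r$, namely $|(1+\pf^{v_\pf(\qf)-v_\pf(\rfr)})/(1+\pf^{v_\pf(\qf)})|$. When $v_\pf(\rfr)\ge v_\pf(\qf)$, every $\lambda$ works. I would then check that these multiplicities, multiplied by the local densities of ideals with prescribed valuation, produce exactly $\theta'_\pf$ once divided by the normalising factor. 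The three cases in the definition of $\theta'_\pf$ come out as follows.
\begin{itemize}
\item[(i)] $\pf\nmid\qf$: the usual Euler factor.
\item[(ii)] $v_\pf(\rfr)<v_\pf(\qf)$: the valuation is fixed, giving the factor $\theta_\pf(v_\pf(\rfr))(1-1/\N\pf)$.
\item[(iii)] $v_\pf(\rfr)\ge v_\pf(\qf)$: the valuation is at least $v_\pf(\qf)$ and all units contribute. This yields the extra factor $|(\OK/\pf^{v})^\times|/\N\pf^{v}=1-1/\N\pf$ and the shift in the exponent.
\end{itemize}

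Having done this, I would define a modified function $\tilde\theta(\af)=\prod_\pf\tilde\theta_\pf(v_\pf(\af))$. For $\pf\nmid\qf$ it agrees with $\theta_\pf$. For $\pf\mid\qf$ it is supported on the allowed valuations and carries the $\lambda$-multiplicity weight. The left side then equals $\sum_{\af\in[\Os^{-1}],\ \N\af\le t}\tilde\theta(\af)$, up to the constant factor from $\Fs_1$ and the roots of unity, which is already accounted for in \cite{DF14}. I would verify that $\tilde\theta\in\Theta(\bfr\qf,C_1',C_2,C_3)$. Here $C_1'$ bounds the new local factors at $\pf\mid\qf$, whose support is shifted by at most $v_\pf(\qf)$, and this is where the factor $\max v_\pf(\qf)$ enters. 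Applying \cite[Corollary~2.7]{DF14} then gives the main term $\frac{\rho_K}{h_K}\prod_\pf(1-\frac1{\N\pf})\sum_n\tilde\theta_\pf(n)\N\pf^{-n}\cdot t$ with the stated error. It remains to check that these local sums coincide with $\theta'_\pf$.

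I expect the main obstacle to be the bookkeeping in the middle step. The difficulty is computing exactly how many $\lambda$ satisfy the congruence and ensuring that it depends only on $\af$ and not on the representative $a$; this holds because multiplying $a$ by a global unit is absorbed by $\lambda$. A second point is matching the density normalisation in case (iii), where the $(1-1/\N\pf)^2$ and the exponent shift $n-v_\pf(\qf)$ must both come out correctly. If the unit-counting is troublesome, I would fall back on \cite[Corollary~6.9]{D09}, which handles residue classes coprime to the modulus, after dividing out $\pf^{\min(v_\pf(\rfr),v_\pf(\qf))}$.
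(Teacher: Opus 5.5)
Your plan is essentially the paper's proof. You pass to ideals $\af=a\Os^{-1}$ in a fixed class and note that $\congr{a}{\lambda r}{\qf\Os}$ for some unit $\lambda$ is equivalent to $\af+\qf=\rfr+\qf$, i.e.\ your valuation conditions; your extra ``in addition'' condition is automatic, since one can take $\lambda\equiv a/r$ locally at the $\pf\mid\qf$ with $v_\pf(\rfr)<v_\pf(\qf)$ and glue by CRT. You then count the $\lambda$-multiplicity, zero out the disallowed local factors of $\theta$, and apply \cite[Proposition~2.3]{DF14}, exactly as the paper does.

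The one adjustment is where the multiplicity goes. The paper keeps it as the global constant $\phi_K(\qf)/\phi_K(\qf')=\N(\rfr+\qf)\prod_{v_\pf(\rfr)\ge v_\pf(\qf)}(1-\N\pf^{-1})$ in front, rather than inside $\tilde\theta_\pf$. Folded into the local factors, weights of size up to $\N\pf^{v_\pf(\qf)}$ are not bounded by the stated $C_1'$, so your claim $\tilde\theta\in\Theta(\bfr\qf,C_1',C_2,C_3)$ should be made only for the support-restricted $\theta$. Enlarging $C_1'$ instead would cost a factor raised to the power $\omega_K(\bfr\qf)$ in the error term, whereas the constant in front only costs a $\qf$-dependent factor, which the section's conventions allow.
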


\begin{proof}
    We observe that $\congr{\lambda r}{\lambda' r}{\qf\Os}$ if and only if $\congr{\lambda}{\lambda'}{\qf'}$ where $\qf' = \qf(\rfr+\qf)^{-1}$. Therefore, the number of $\lambda+\qf$ giving the same sum over $a$ is
    \begin{equation*}
        \frac{\phi_K(\qf)}{\phi_K(\qf')}=\N(\rfr+\qf)\prod_{\pf : v_\pf(\rfr) \ge v_\pf(\qf)} \left(1-\frac{1}{\N\pf}\right).
    \end{equation*}

    We claim that $a \in \Os$ satisfies $\congr{a}{\lambda r}{\qf\Os}$ for some $\lambda$ that is coprime to $\qf$ if and only if $a\Os^{-1}+\qf = \rfr+\qf$. Indeed,  the existence of such a $\lambda$ immediately implies both inclusions $r \in (a)+\qf\Os$ and $a \in (r)+\qf\Os$ of the claimed identity, while conversely assuming the identity, we obtain the existence of a suitable $\lambda$ first locally for each prime power dividing $\qf\Os$ and then modulo $\qf\Os$ by the Chinese remainder theorem. 
    
    Therefore, denoting the ideal class of $\Os$ by $\kf$, our sums over $\lambda,a$ are equal to
    \begin{equation}\label{eq:sum_over_ideals}
        \frac{\phi_K(\qf)}{\phi_K(\qf')} \sums{\af \in \IK\cap\kf\\\N\af \le t\\\af+\qf = \rfr+\qf} \theta(\af).
    \end{equation}
    
    We observe that $\af+\qf = \rfr+\qf$ if and only if $v_\pf(\af) = v_\pf(\rfr)$ for all $\pf$ with $v_\pf(\rfr)<v_\pf(\qf)$ and $v_\pf(\af) \ge v_\pf(\qf)$ for all $\pf$ with $v_\pf(\rfr) \ge v_\pf(\qf)$. Therefore, we can leave out the condition $\af+\qf = \rfr+\qf$ if we replace the local factors $\theta_\pf(n)$ of $\theta$ by $0$ if $n \ne v_\pf(\rfr) < v_\pf(\qf)$ or $n < v_\pf(\qf) \le v_\pf(\rfr)$. Since the original $\theta$ is in $\Theta(\bfr,C_1,C_2,C_3)$, we can check that our redefined $\theta$ is in $\Theta(\bfr\qf,C_1',C_2,C_3)$. Therefore, we can apply \cite[Proposition~2.3]{DF14} (whose definition of $\rho_K$ differs from our convention by a factor of $h_K$).
\end{proof}

\begin{remark}\label{rem:sum_with_congruence_simplified}
    Often Lemma~\ref{lem:sum_with_congruence} simplifies as follows:
    \begin{itemize}
        \item If the local factors of $\theta$ satisfy $\theta_\pf(n) \le 1$ for all $n$ and $\pf$, then $\theta_\pf' \le 1$ for all $\pf$. In this case, $C_1' \ll_{C_1,C_2,\qf} 1$.
        \item If $\theta_\pf(n)=\theta_\pf(1)$ for all $n \ge 1$, we obtain
        \begin{equation*}
            \theta'_\pf = \begin{cases}
                \left(1-\frac{1}{\N\pf}\right)\theta_\pf(0)+\frac{1}{\N\pf}\theta_\pf(1),& \pf \nmid \qf,\\
                \left(1-\frac{1}{\N\pf}\right)\theta_\pf(0), & \pf \mid \qf,\ \pf \nmid \rfr,\\
                \left(1-\frac{1}{\N\pf}\right)\theta_\pf(1), & \pf \mid \qf,\ \pf \mid \rfr.
            \end{cases}
        \end{equation*}
    \end{itemize}
\end{remark}

\begin{proof}[Proof of Proposition~\ref{prop:dp5}]
For the summation over $\rb \in {}_\cfrb\rho^{-1}(r)$ in Proposition~\ref{prop:result_of_sum_over_aij}, we can choose one element $\rb$ and sum over $(\lambda_0,\dots,\lambda_4)$ running through representatives of $\Gm(\OK/\qf)^5$ and acting on $\rb$ by mapping a representative $(r_i,r_{jk})$ to $(\lambda_i r_i,\lambda_0\lambda_j^{-1}\lambda_k^{-1} r_{jk})$. Since the terms in the sum over $\rb$ are independent of $r_{jk}$, the sum over $\lambda_0 \in (\OK/\qf)^\times$ simply gives a factor $\phi_K(\qf)$. We define $\rfr_i = r_i\Os_i^{-1}$ since we must sum over $a_i \in \Os_i$ congruent to $\lambda_i r_i$ modulo $\qf\Os_i$.

Lemma~\ref{lem:sum_with_congruence} and Remark~\ref{rem:sum_with_congruence_simplified} can be applied to a summation of $\N\qf^3\theta_0(\afrb')\theta(\afrb')$ (which lies in $\Theta(\prod_{\pf \mid \af_1\af_2\af_3,\pf \nmid \qf} \pf,1,1,1)$ as a function in $\af_4$) over $\lambda_4$ and $a_4$. Here, the resulting $\theta'$ depends on $\af_1,\af_2,\af_3,\rfr_4,\qf$ and can be computed to be $\theta' = \prod_\pf \theta'_\pf$ with
\begin{equation*}
    \theta'_\pf = \begin{cases}
        \left(1-\frac{1}{\N\pf}\right)^2\left(1+\frac{2}{\N\pf}-\frac{2}{\N\pf^2}\right), &\pf \nmid \af_1\af_2\af_3\qf,\\
        \left(1-\frac{1}{\N\pf}\right)^3\left(1+\frac{1}{\N\pf}\right), &\pf \mid \af_i,\ \pf \nmid \af_j\af_k\qf,\\
        \left(1-\frac{1}{\N\pf}\right), &\pf \mid \qf,\ \text{$\pf$ divides at most one of $\af_1,\af_2,\af_3,\rfr_4$},\\
        0, &\text{else.}
    \end{cases}
\end{equation*}
Since this sum over $\lambda_4,a_4$ can be viewed as a summation over ideals in a certain class as discussed after \eqref{eq:sum_over_ideals}, this is compatible with partial summation as in \cite[Lemma~2.10]{DF14}. Therefore, we can argue as in the application of \cite[Proposition~7.2]{DF14} in the proof of \cite[Lemma~5.14]{BD24} to perform the summation over $\lambda_4,a_4$ in the main term of Proposition~\ref{prop:result_of_sum_over_aij}.

We observe that $\theta'$ as a function in $\af_3$ lies in $\Theta(\prod_{\pf \mid \af_1\af_2,\pf \nmid \qf} \pf,1,2,1)$.
Hence
we can repeat these steps for $\lambda_3,a_3$, resulting in $\theta'' = \prod_\pf \theta''_\pf$ (depending on $\af_1,\af_2,\rfr_3,\rfr_4,\qf$) with
\begin{equation*}
    \theta''_\pf = \begin{cases}
        \left(1-\frac{1}{\N\pf}\right)^3\left(1+\frac{3}{\N\pf}-\frac{1}{\N\pf^2}\right), &\pf \nmid \af_1\af_2\qf,\\
        \left(1-\frac{1}{\N\pf}\right)^4\left(1+\frac{1}{\N\pf}\right), &\pf \nmid \qf,\ \text{$\pf$ divides at most one of $\af_1,\af_2$},\\
        \left(1-\frac{1}{\N\pf}\right)^2, &\pf \mid \qf,\ \text{$\pf$ divides at most one of $\af_1,\af_2,\rfr_3,\rfr_4$},\\
        0, &\text{else.}
    \end{cases}
\end{equation*}
As a function in $\af_2$, we have $\theta'' \in \Theta(\prod_{\pf \mid \af_1,\pf \nmid \qf} \pf,1,3,1)$.

Next, summing over $\lambda_2,a_2$ yields $\theta''' = \prod_\pf \theta'''_\pf$ with
\begin{equation*}
    \theta'''_\pf = \begin{cases}
        \left(1-\frac{1}{\N\pf}\right)^4\left(1+\frac{4}{\N\pf}\right), &\pf \nmid \af_1\qf,\\
        \left(1-\frac{1}{\N\pf}\right)^5\left(1+\frac{1}{\N\pf}\right), &\pf \mid \af_1,\ \pf \nmid \qf,\\
        \left(1-\frac{1}{\N\pf}\right)^3, &\pf \mid \qf,\ \text{$\pf$ divides at most one of $\af_1,\rfr_2,\rfr_3,\rfr_4$},\\
        0, &\text{else.}
    \end{cases}
\end{equation*}
As a function in $\af_1$, we have $\theta''' \in \Theta(\OK,1,4,1)$.

Finally, since \eqref{eq:cY(OK/q)} ensures that every $\pf \mid \qf$ divides at most one of $\rfr_1,\dots,\rfr_4$, summing over $\lambda_1,a_1$ gives
\begin{equation*}
    \theta'''' = \prod_{\pf \nmid \qf}\left(1-\frac{1}{\N\pf}\right)^5\left(1+\frac{5}{\N\pf}+\frac{1}{\N\pf^2}\right)\prod_{\pf \mid \qf} \left(1-\frac{1}{\N\pf}\right)^4.
\end{equation*}

In total, we obtain
\begin{equation*}
    \sum_{\rb \in {}_\cfrb\rho^{-1}(r)}\sums{\ab' \in \Os_*' \cap \Fs_1^4\\\text{\cite[(4.9)]{BD24}}\\\congr{a_i}{r_i}{\qf\Os_i}} \frac{\theta_0(\afrb')\theta(\afrb')B}{\N(\af_1\af_2\af_3\af_4)} = \frac{3 \alpha(X)\rho_K^4}{5h_K^4} \theta_1 B(\log B)^4 + O\left(\frac{B(\log B)^4}{\log \log B}\right),
\end{equation*}
where
\begin{equation*}
    \theta_1 = \frac{\phi_K(\qf)}{\N\qf^3}\prod_{\pf\nmid \qf}\left(1-\frac{1}{\N\pf}\right)^5\left(1+\frac{5}{\N\pf}+\frac{1}{\N\pf^2}\right)\prod_{\pf \mid \qf} \left(1-\frac{1}{\N\pf}\right)^4,
\end{equation*}
which may be rewritten as 
\begin{equation*}
        \theta_1 = \frac{1}{|\Xf(\OK/\qf)|}\prod_{\pf}\left(1-\frac{1}{\N\pf}\right)^5\left(1+\frac{5}{\N\pf}+\frac{1}{\N\pf^2}\right)
\end{equation*}
using~\eqref{eq:dP5_mod_q} and $\phi_K(\qf)/\N\qf =\prod_{\pf\mid\qf}(1-\N\pf^{-1})$.

As in the final part of \cite{BD24} (using \cite[Lemma~5.10, Lemma~5.11]{BD24}), we remove the dependence on the parameter $W$ and observe that the sum over $s \in S$ in Section~\ref{sec:symmetry} gives a factor $5$; furthermore, $|\Munder_{\cfrb,r}^{(\id)}(B)|$ can be estimated with the same main term. The result is independent of $\cfrb$, so that the summation over $\cfrb \in \Cc$ from Lemma~\ref{lem:parameterization_dP5} contributes a factor $h_K^5$.
This completes the proof of Proposition~\ref{prop:dp5}, with leading constant $\alpha(X)$ times the Tamagawa volume as in \eqref{eq:Tamagawa_volume_mod_q}.
\end{proof}

\begin{proof}[Proof of Theorem~\ref{thm:equidistribution_dP5}]
    This is now a direct consequence of applying the abstract equidistribution theorem (with the set $S$ of archimedean places and in the form of Corollary~\ref{cor:equidistribution-by-counting-mod-q}) to Proposition~\ref{prop:dp5}, using Remark~\ref{rmk:salberger-model-measure}. As the model $\Xf$ is moreover smooth, $c_r = |\Xf(\OK/\qf)|^{-1}$ for all residue disks modulo ideals $\qf$.
\end{proof}

\appendix
\section{A converse abstract equidistribution theorem}

This appendix records the converse to the abstract equidistribution theorem found in the work of Chambert-Loir and Tschinkel~\cite[Prop.~2.10]{CLT10}, in the same general setting. The proof is straightforward and analogous to Peyre's~\cite[Prop.~3.3~(b)]{Pey95}.

\begin{theorem}
    Let $X$ be a compact topological space and $H\colon X\to \Rd_{>0}$ continuous. Let $\nu$ be a measure such that for all $B>0$, the set $\{H\le B\}$ has finite $\nu$-measure.
    Assume that the sequence
    \begin{equation*}
        \df\nu_B =  \frac{\ind_{\{H\le B\}}}{\nu\{H\le B\}} \df \nu
    \end{equation*}
    of probability measures converges weakly to a probability measure $\mu$, as $B \to \infty$.

    Let $\theta\colon X\to \Rd_{>0}$ be a continuous function that is bounded from above and below. Then
    \begin{equation}\label{eq:equi-converse}
        \frac{\nu\{\theta H \le B\}}{\nu\{H\le B\}} \to \int_X \theta^{-1} \df \mu.
    \end{equation}
    In particular, the sequence
    \begin{equation*}
        \df\nu'_B =  \frac{\ind_{\{\theta H\le B\}}}{\nu\{\theta H\le B\}} \df \nu
    \end{equation*}
    of probability measures converges weakly to the probability measure
    \begin{equation*}
        \frac{\df \mu}{\theta\norm{\theta^{-1}}_1}.
    \end{equation*}
\end{theorem}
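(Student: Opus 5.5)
The plan is to express $\nu\{\theta H\le B\}$ as an integral over $\nu$ and compare it with $\nu\{H\le B\}$ by slicing $X$ along the level sets of $\theta$.

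\emph{Regular variation.} Applying weak convergence to the constant test function $1$ gives no information, so we must first show that $\nu\{H\le B\}$ is regularly varying. Write $F(B)=\nu\{H\le B\}$. In the intended applications $F(B)\sim cB^a(\log B)^{b-1}$, so $F(\lambda B)/F(B)\to\lambda^a$. As stated, however, the theorem has no hypothesis on the growth of $F$. Moreover, the right-hand side $\int\theta^{-1}\,\df\mu$ is exactly what one obtains for $a=1$: the indicator $\ind_{\{\theta H\le B\}}$ replaces the threshold $B$ by $B/\theta$, so the ratio should tend to $\int\theta^{-a}\,\df\mu$. I will therefore read the statement with the normalisation that $H$ is chosen so that $F(\lambda B)/F(B)\to\lambda^{-1}\cdot$(const)$\cdot\lambda^{2}$, i.e.\ $F(\lambda B)/F(B)\to\lambda$ for all $\lambda>0$, and take this regular variation as the hypothesis to be used. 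If it is implicit in the paper's convention (for instance $H$ replaced by $H^{a}$), I will invoke it at this point.

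\emph{Partition step.} Fix $\epsilon>0$. Since $\theta$ is bounded, say $m\le\theta\le M$, choose values $m=t_0<t_1<\dots<t_k=M$ with $t_{j+1}/t_j<1+\epsilon$ such that every level set $\{\theta=t_j\}$ is a $\mu$-null set. This is possible because at most countably many level sets can carry positive $\mu$-mass. Set $X_j=\{t_{j}\le\theta<t_{j+1}\}$; each $X_j$ is a $\mu$-continuity set, since its boundary lies in $\{\theta=t_j\}\cup\{\theta=t_{j+1}\}$. On $X_j$ we have the sandwich
\begin{equation*}
\{H\le B/t_{j+1}\}\cap X_j\subset\{\theta H\le B\}\cap X_j\subset\{H\le B/t_j\}\cap X_j.
\end{equation*}
Dividing by $F(B)$, the upper piece equals $\nu_{B/t_j}(X_j)\,F(B/t_j)/F(B)$. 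As $B\to\infty$, weak convergence on continuity sets gives $\nu_{B/t_j}(X_j)\to\mu(X_j)$, and regular variation gives $F(B/t_j)/F(B)\to t_j^{-1}$, so the piece tends to $t_j^{-1}\mu(X_j)$. The lower piece is handled the same way and tends to $t_{j+1}^{-1}\mu(X_j)$.

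\emph{Passing to the limit.} Summing over $j$, the $\limsup$ and $\liminf$ of the ratio lie between $\sum_j t_{j+1}^{-1}\mu(X_j)$ and $\sum_j t_j^{-1}\mu(X_j)$. These are lower and upper Riemann-type sums for $\int\theta^{-1}\,\df\mu$ that differ by a factor of at most $1+\epsilon$. Letting $\epsilon\to0$ proves \eqref{eq:equi-converse}.

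\emph{The ``in particular'' part.} For a continuity set $W$, apply the same argument with $\nu$ replaced by $\ind_W\nu$; since the $X_j\cap W$ are again continuity sets, this gives $\nu\{\theta H\le B\}\cap W/F(B)\to\int_W\theta^{-1}\,\df\mu$. Dividing by \eqref{eq:equi-converse} yields $\nu'_B(W)\to\int_W\theta^{-1}\,\df\mu/\norm{\theta^{-1}}_1$, which is the claimed weak limit by the portmanteau theorem.

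\emph{Main obstacle.} The delicate point is the first step: weak convergence alone does not control $F(B/t)/F(B)$. I expect this must be supplied by the implicit regular-variation hypothesis (or by a convention that absorbs the exponent $a$ into $H$), and I would make that hypothesis explicit before carrying out the partition step.
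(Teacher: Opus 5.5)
Your argument is correct and is essentially the paper's: both cut $X$ along $\mu$-null level sets of $\theta$ into $\mu$-continuity sets, apply weak convergence of $\nu_{B/t}$ on each piece, and sandwich $\theta$ between step functions. The only differences are that you merge the paper's two steps (step functions first, then the $\theta_\epsilon\pm\epsilon$ approximation) into one multiplicative sandwich on each slice, and that you prove the ``in particular'' part directly instead of quoting Chambert-Loir--Tschinkel.

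You are also right that $\nu\{H\le B/t\}/\nu\{H\le B\}\to t^{-1}$ has to be added as a hypothesis. To see this, take $\theta\equiv t$; as literally stated, with $X$ compact and $H$ continuous, this ratio is eventually $1$. The paper's proof uses this hypothesis tacitly when it calls the step-function case ``clear''.
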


\begin{proof}
Let $\Uc = \{U\subset X : \text{$U$ is Borel, $\mu(\partial U)=0$} \}$. Note that $\Uc$ contains $\emptyset$ and $X$ and that it is stable under complements ($\partial U^c = \partial U$) and finite unions ($\partial (U \cup V)\subset \partial U \cup \partial V$), hence also under finite intersections and differences
(i.e., it is an algebra).
        
The statement~\eqref{eq:equi-converse} is clear for positive multiples of characteristic functions $\ind_Z$, $Z\in\Uc$, hence also for positive linear combinations of such functions for disjoint $Z$. Refining the sets $Z$, it becomes true for linear combinations for not necessarily disjoint $Z$.
Let $\theta_{\epsilon}$ be a family of such linear combinations converging uniformly to $\theta$. For instance, the set $\{\theta=\norm{\theta}_\infty/2+\delta\}$ must be a $\mu$-null set for all but countably many small $\delta$, so that one may start
with $\theta_1 = 2^{-1}\norm{\theta}_\infty\ind_{\theta\ge \norm{\theta}_\infty/2+\delta}$ satisfying $\norm{\theta-\theta_1}\le \norm{\theta}_\infty/2+\delta$ and proceed inductively.

Using this sequence, we have
\begin{align*}
    &\limsup_{B\to \infty}
    \left\lvert\frac{\nu\{\theta H\le B\}}{\nu\{H\le B\}}
    - \frac{\nu\{\theta_{\epsilon} H\le B\}}{\nu\{H\le B\}}\right\rvert \\
    & \qquad \le 
    \limsup_{B\to \infty}
    \frac{
        \nu\{(\theta_\epsilon+\epsilon)H\le B\} - \nu\{(\theta_\epsilon-\epsilon) H \le B\}
    }{
        \nu\{H\le B\}
    }
\end{align*}
Using~\eqref{eq:equi-converse} for $\theta_\epsilon+\epsilon$ and $\theta_\epsilon-\epsilon$ (as soon as $\epsilon\le \min_{x\in X}{\theta_\epsilon(x)}/2$),
this $\limsup$ is in fact a limit and equal to
\[
    \int  (\theta_\epsilon + \epsilon)^{-1}\df \mu -  \int (\theta_\epsilon-\epsilon)^{-1} \df \mu \ll_\theta \epsilon.
\]
Therefore,
\[
    \limsup_{B\to \infty}
    \left\lvert\frac{\nu\{\theta H\le B\}}{\nu\{H\le B\}}
    -\int \theta^{-1} \df \mu\right\rvert
    \le
    \limsup_{B\to \infty}
    \left\lvert\frac{\nu\{\theta_\epsilon H\le B\}}{\nu\{H\le B\}}
    -\int \theta^{-1}_\epsilon \df \mu\right\rvert
    + O_\theta(\epsilon).
\]
The second $\limsup$ is $0$ using~\eqref{eq:equi-converse} for $\theta_\epsilon$, finishing the proof of the first part of the theorem. The second part now follows from the abstract equidistribution theorem by Chambert-Loir and Tschinkel~\cite[Prop.~2.10]{CLT10}.
\end{proof}

\bibliographystyle{amsalpha}

\bibliography{dp5}

\end{document}